\documentclass[12pt, reqno]{amsart}
\usepackage[T1]{fontenc}
\usepackage{amsfonts}
\usepackage{amsmath}
\usepackage{amsthm}
\usepackage{amssymb}
\usepackage{geometry}
\usepackage{graphicx}
\usepackage{xcolor}
\usepackage{mathtools}
\usepackage[colorlinks=true, linkcolor=blue]{hyperref}
\usepackage{cleveref}
\usepackage[english]{babel}
\usepackage{lineno}
\usepackage{float}

\newtheorem{theorem}{Theorem}[section]

\newtheorem{lemma}[theorem]{Lemma}
\newtheorem{conjecture}[theorem]{Conjecture}
\newtheorem{proposition}[theorem]{Proposition}

\newtheorem{claim}[theorem]{Claim}

\usepackage{tikz}
\usetikzlibrary{positioning}
\usetikzlibrary{decorations,arrows}
\usetikzlibrary{decorations.markings}
\numberwithin{equation}{section}

\usepackage{rustic}
\usepackage[T1]{fontenc}

\emergencystretch=\maxdimen
\title{Strong edge-coloring of sparse graphs with Ore-degree 7 or 8}
\author{Runze Wang}
\address[]{Department of Mathematical Sciences, University of Memphis, Memphis, TN 38152, USA}
\email{runze.w@hotmail.com}
\thanks{}
\date{\today}
\subjclass[2020]{05C15}

\begin{document}

\sloppy

\begin{abstract}
    In a strong edge-coloring of a graph $G=(V,E)$, any two edges of distance at most $2$ get distinct colors. The strong chromatic index of $G$, denoted by $\chi_s'(G)$, is the minimum number of colors needed in a strong edge-coloring of $G$. The Ore-degree of $G$ is defined by $\max\{d(u)+d(v):uv\in E\}$. In this paper, we apply the discharging method and make use of Hall's marriage theorem to prove two results toward a conjecture by Chen et al. First, we prove that if $G$ is a graph with Ore-degree $7$ and maximum average degree less than $\frac{34}{11}$, then $\chi_s'(G)\le 13$. This result improves the previous best bound from $\frac{40}{13}$ to $\frac{34}{11}$. Second, we prove that if $G$ is a graph with Ore-degree $8$ and maximum average degree less than $\frac{113}{31}$, then $\chi_s'(G)\le 20$.
\end{abstract}
\keywords{Strong edge-coloring; Ore-degree; Discharging method; Maximum average degree}

\maketitle

\section{Introduction}

In this paper, we only study finite simple connected graphs.

In a graph $G=(V,E)$, the distance between two edges $e$ and $e'$ is the distance between the corresponding vertices in the line graph of $G$. Thus, if $e$ and $e'$ are incident, then they have distance $1$; if $e$ and $e'$ are not incident, but are joined by another edge, then they have distance $2$. For convenience, if the distance between $e$ and $e'$ is $1$ or $2$, then we say that $e$ \emph{sees} $e'$. (We do not say $e$ sees itself, because the distance between $e$ and itself is $0$.)

The concept of \emph{strong edge-coloring} of graphs was introduced by Fouquet and Jolivet \cite{FJ} in 1983. In a strong edge-coloring of a graph $G$, if two edges see each other, then they get distinct colors. If we can use a $k$-color assignment $f:E(G)\longrightarrow \{1,2,\dots,k\}$ to give $G$ a strong edge-coloring, then $f$ is called a \emph{strong $k$-edge-coloring} of $G$. The \emph{strong chromatic index} of $G$, denoted by $\chi_s'(G)$, is the minimum number of colors needed in a strong edge-coloring of $G$.

Let $\Delta$ be the maximum degree of $G$. Using a probabilistic method, Molloy and Reed \cite{MR} proved that $\chi_s'(G)\le 1.998\Delta^2$ for large $\Delta$. This result was improved to $\chi_s'(G)\le 1.93\Delta^2$ by Bruhn and Joos \cite{BJ}, and then to $\chi_s'(G)\le 1.835\Delta^2$ by Bonamy et al.~\cite{BPP}. The best bound so far is $\chi_s'(G)\le 1.772\Delta^2$, given by Hurley et al.~\cite{HDK}.

There is a famous conjecture made by Erd\H os and Ne\v set\v ril \cite{EN}.

\begin{conjecture}[Erd\H os and Ne\v set\v ril \cite{EN}]\label{conj1}
Let $G$ be a graph with maximum degree $\Delta$. Then
\[
    \chi_s'(G)\le\begin{cases}
        \frac{5}{4}\Delta^2 &if\ \Delta\ is\ even, \\
        \frac{5}{4}\Delta^2-\frac{\Delta}{2}+\frac{1}{4} &if\ \Delta\ is\ odd.
    \end{cases}
\]
\end{conjecture}

For $\Delta\le 2$, this conjecture is trivially true. The first non-trivial case $\Delta=3$ was verified by Andersen \cite{An} and independently by Hor\'ak et al.~\cite{HHT}. For $\Delta=4$, Huang et al.~\cite{HSY} proved that $\chi_s'(G)\le 21$ (the conjectured bound is $20$). For $\Delta=5$, Zang \cite{Za} proved that $\chi_s'(G)\le 37$ (the conjectured bound is $29$).

The \emph{maximum average degree} of $G$, denoted by $mad(G)$, is defined by
\begin{align*}
    mad(G):=\max\bigl\{\bar{d}(G'):G'\subseteq G\bigr\},
\end{align*}
where $\bar{d}(G')$ is the average degree of $G'$. We use $mad(G)$ to measure the sparsity of $G$.

For sparse graphs, Lv et al.~\cite{LLY} proved that if $G$ is a graph with $\Delta=4$ and $mad(G)<\frac{51}{13}$, then Conjecture \ref{conj1} is true for $G$; Lu et al.~\cite{LLH} proved that if $G$ is a graph with $\Delta=5$ and $mad(G)<\frac{22}{5}$, then Conjecture \ref{conj1} is true for $G$.

The \emph{Ore-degree} of $G=(V,E)$, denoted by $\theta(G)$, is defined by
\begin{align*}
    \theta(G):=\max\{d(u)+d(v):uv\in E\}.
\end{align*}
In some papers, e.g.~\cite{CCZZ,LL,Wa}, the Ore-degree of $G$ is also called "the edge weight of $G$" or "the maximum edge weight of $G$".

Chen et al.~\cite{CHYZ} made the following conjecture, which is similar to Conjecture \ref{conj1} but uses Ore-degree $\theta(G)$ instead of maximum degree $\Delta$.

\begin{conjecture}[Chen et al.~\cite{CHYZ}]\label{conj2}
    Let $G$ be a graph with Ore-degree $\theta(G)\ge 5$. Then
    \[\chi_s'(G)\le \begin{cases}
        5\bigl\lceil\frac{\theta(G)}{4}\bigr\rceil^2-8\bigl\lceil\frac{\theta(G)}{4}\bigr\rceil+3 &if\ \theta(G)\equiv 1\mod 4; \\
        5\bigl\lceil\frac{\theta(G)}{4}\bigr\rceil^2-6\bigl\lceil\frac{\theta(G)}{4}\bigr\rceil+2 &if\ \theta(G)\equiv 2\mod 4; \\
        5\bigl\lceil\frac{\theta(G)}{4}\bigr\rceil^2-4\bigl\lceil\frac{\theta(G)}{4}\bigr\rceil+1 &if\ \theta(G)\equiv 3\mod 4; \\
        5\bigl\lceil\frac{\theta(G)}{4}\bigr\rceil^2 &if\ \theta(G)\equiv 0\mod 4.
    \end{cases}
    \]
\end{conjecture}

The case $\theta(G)=5$ was verified by Wu and Lin \cite{WL}. The case $\theta(G)=6$ was verified by Nakprasit and Nakprasit \cite{NN}, and independently by Chen et al.~\cite{CHYZ}. 

For $\theta(G)=7$, the conjectured bound is $\chi_s'(G)\le 13$, and the following results have been proved:
\begin{itemize}
    \item Chen et al.~\cite{CHYZ} proved that $\chi_s'(G)\le 15$.
    \item Wang \cite{Wa} proved that $\chi_s'(G)\le 13$ if $mad(G)<\frac{40}{13}$.
    \item Nelson and Yu \cite{NY} proved that $\chi_s'(G)\le 13$ if $G$ is a planar graph.
    \item Lin and Lin \cite{LL} proved that $\chi_s'(G)\le 9$ if $G$ is claw-free.
\end{itemize}

For $\theta(G)=8$, the conjectured bound is $\chi_s'(G)\le 20$, and Chen et al.~\cite{CCZZ} proved that $\chi_s'(G)\le 21$.

In this paper, we prove two major results. 

First, in Section 2, we study the case $\theta(G)=7$ and improve the result in \cite{Wa} from $mad(G)<\frac{40}{13}=3+\frac{1}{13}$ to $mad(G)<\frac{34}{11}=3+\frac{1}{11}$.

\begin{theorem}\label{thm1}
    Let $G$ be a graph with $\theta(G)\le 7$. If $mad(G)<\frac{34}{11}$, then $\chi_s'(G)\le 13$.
\end{theorem}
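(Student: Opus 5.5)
We argue by contradiction. Let $G$ be a counterexample with $\theta(G)\le 7$, $mad(G)<\frac{34}{11}$ and $\chi_s'(G)>13$, chosen with $|V(G)|+|E(G)|$ minimum. Every proper subgraph of $G$ still satisfies both hypotheses, so it has a strong $13$-edge-coloring. The plan has two halves. First we prove a list of reducible configurations: structures that cannot occur in $G$, because a coloring of $G$ minus a few edges would extend to all of $G$. Then a discharging argument shows that the remaining structure forces average degree at least $\frac{34}{11}$, which is a contradiction.

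Since $\theta(G)\le 7$, every vertex has degree at most $6$. A $6$-vertex has only $1$-neighbors, a $5$-vertex has neighbors of degree at most $2$, a $4$-vertex has neighbors of degree at most $3$, and so on. We first show $\delta(G)\ge 2$ in the relevant places: a $1$-vertex whose neighbor has small degree is reducible, because its pendant edge sees few edges. The key counting fact is this. Let $e=uv$ with $d(u)+d(v)\le 7$. Then $e$ sees at most $(d(u)-1)+(d(v)-1)$ edges at distance $1$, and at distance $2$ it sees the edges at the neighbors of $u$ and $v$. If uncolored edges have at least $13$ minus (number of forbidden colors) available colors, we can extend greedily.

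When several edges must be recolored simultaneously, we use Hall's marriage theorem, as the abstract indicates. We delete a small configuration (a $2$-vertex chain, or a $3$-vertex together with nearby $2$-vertices) and color the rest by minimality. We then compute for each removed edge its list of available colors. Finally we check Hall's condition, using the edges' mutual visibility (they often pairwise see each other, so we need a system of distinct representatives). Where Hall fails only in a degenerate way, we first recolor or swap one precolored edge.

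The natural reducible configurations to aim for are:
\begin{itemize}
\item[(i)] a $1$-vertex adjacent to a vertex of degree at most $5$ is reducible in most cases, with $5$- and $6$-vertices handled by counting their $1$-neighbors;
\item[(ii)] two adjacent $2$-vertices whose other neighbors are of low degree;
\item[(iii)] a $3$-vertex with too many $2$-neighbors, or a $4$-vertex with too many $2$- or $3$-neighbors of restricted type;
\item[(iv)] $2$-vertices lying on paths between $3$-vertices, each such path allowed to be only of limited kind.
\end{itemize}

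For the discharging, each vertex $v$ starts with charge $d(v)-\frac{34}{11}$, so the total charge is negative. The $2$-vertices start with $-\frac{12}{11}$ and the $3$-vertices with $-\frac{1}{11}$. Vertices of degree $4$ to $6$ have surplus ($\frac{10}{11}$, $\frac{21}{11}$, $\frac{32}{11}$). The rules are:
\begin{itemize}
\item $6$- and $5$-vertices feed their low neighbors (a $1$-neighbor needs $\frac{23}{11}$);
\item $4$-vertices send to $2$- and $3$-neighbors;
\item $3$-vertices with few $2$-neighbors send to those $2$-neighbors.
\end{itemize}
The main obstacle is the tight balance between $3$-vertices and $2$-vertices. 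A $3$-vertex has only $-\frac{1}{11}$, yet $2$-vertices in a $3$-$2$-$3$ pattern need $\frac{6}{11}$ from each side. So $3$-vertices must themselves draw charge from farther away, for example through $4$-neighbors or via "$3$-vertices with no $2$-neighbors". Proving the reducibility that justifies this, namely bounding how many $2$-neighbors a $3$-vertex can have, and in what surroundings, is where the Hall-type list coloring of stars and paths around a $3$-vertex is needed. I expect this to be the hardest and most case-heavy step. I would first try to reduce a $3$-vertex with two $2$-neighbors whose other neighbors have degree at most $3$, and then tune the rule weights to $\frac{34}{11}$.
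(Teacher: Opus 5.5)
There is a genuine gap: your reductions and discharging are aimed at an obstacle that does not exist, and the obstacle that actually determines the bound $\frac{34}{11}$ is not addressed.

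\textbf{The $2$-vertices are harmless.} Every pendant edge $uv$ sees at most $(d(v)-1)(7-d(v))\le 9$ edges. So $G$ has no $1$-vertices at all, and therefore no $6$-vertices; the paper also excludes $5$-vertices. More importantly, a $2$-vertex $u$ with a $3$-neighbor $v_1$ and other neighbor $v_2$ is reducible by plain counting. In $G-u$, the edge $uv_1$ sees at most $8+(d(v_2)-1)\le 12$ colored edges, and $uv_2$ sees at most $(d(v_2)-1)(7-d(v_2))+2\le 11$. Hence every $2$-vertex lies between two $4$-vertices, and the $3$--$2$--$3$ pattern you call the main obstacle never occurs. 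The step you expect to be hardest, bounding the $2$-neighbors of a $3$-vertex, is a one-line count giving zero. Once one also knows that a $4$-vertex has at most one $2$-neighbor, each $4$-vertex pays $\frac{6}{11}$ to its $2$-neighbor and $\frac{4}{33}$ to each $3$-neighbor, and all $2$- and $4$-vertices are settled at once.

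\textbf{The real difficulty is $3$-vertices with only $3$-neighbors.} Such a vertex (a $3(D)$-vertex in the paper) starts at $-\frac{1}{11}$, has no neighbor with surplus, and under your rules stays at $-\frac{1}{11}$. The hypothesis $mad(G)<\frac{34}{11}$ does not exclude these vertices, since cubic pieces have average degree $3$. Charge must be pushed along $3$--$3$ edges, through several layers, from $3$-vertices that do touch $4$-vertices. The paper classifies $3$-vertices by their number of $4$-neighbors, refines $3(C)$ into weak, moderate and strong, and passes $\frac{1}{22},\frac{5}{132},\frac{1}{33},\frac{1}{66}$ along $3$--$3$ edges.

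This is sound only because of reducible configurations around $3(D)$-vertices that your list (i)--(iv) does not contain:
\begin{itemize}
\item a $3(D)$-vertex adjacent to another $3(D)$-vertex has two $3(B)$-neighbors;
\item there are no triangles;
\item there is no $4$-cycle of $3$-vertices through a $3(D)$-vertex;
\item certain $5$-cycles through three $3(D)$-vertices, and the pan configuration, are excluded;
\item a $3(D)$-vertex has at most one $3(C_{weak})$-neighbor, and if all its neighbors are $3(C)$ and one is weak, the other two are $3(C_{strong})$.
\end{itemize}

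The last few are not reachable by greedy extension or by Hall applied only to the edges at the deleted vertex. One deletes the $3(D)$-vertex and additionally uncolors one or two nearby edges, so that some pair of uncolored edges provably do not see each other; the short-cycle exclusions exist to guarantee this. Then either the pair receives a common color, or their lists are disjoint, and the size of the union of their lists verifies Hall's condition. Without these configurations and the layered rules along $3$--$3$ edges, your discharging cannot close at $\frac{34}{11}$.
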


Then, in Section 3, we study the case $\theta(G)=8$ and verify Conjecture \ref{conj2} for graphs with $mad(G)<\frac{113}{31}=3+\frac{20}{31}$.

\begin{theorem}\label{thm2}
    Let $G$ be a graph with $\theta(G)\le 8$. If $mad(G)<\frac{113}{31}$, then $\chi_s'(G)\le 20$.
\end{theorem}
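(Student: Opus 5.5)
We argue by minimal counterexample combined with discharging. Let $G$ be a counterexample to Theorem \ref{thm2} with $|E(G)|$ minimum: $\theta(G)\le 8$, $mad(G)<\frac{113}{31}$, and $\chi_s'(G)>20$. Every proper subgraph still satisfies both hypotheses, so it has a strong $20$-edge-coloring. Since $\theta(G)\le 8$, every edge $uv$ has $d(u)+d(v)\le 8$. So the degree pairs on edges are limited to $(1,\le 7)$, $(2,\le 6)$, $(3,\le 5)$ and $(4,4)$. The maximum degree may reach $7$, but a vertex of degree $k\ge 5$ has all its neighbours of degree at most $8-k\le 3$.

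The first stage is a list of reducible configurations. For an edge $e=uv$, the number of edges seen by $e$ is at most $\sum_{x\in N(u)\cup N(v)} d(x)$ minus the overcounting. A typical reduction deletes an edge $e$ (or a small set of edges), colors the rest by minimality, and counts forbidden colors. An edge with $1+(\text{stuff})$ small sees fewer than $20$ edges and can be recolored greedily. From this we expect:
\begin{itemize}
\item[(i)] there are no $1$-vertices;
\item[(ii)] $2$-vertices are adjacent only to vertices of large degree, or are otherwise tightly restricted;
\item[(iii)] the number of $2$-vertices or $3$-vertices near a $4$-vertex, a $5$-vertex or a $6$-vertex is bounded.
\end{itemize}
For configurations where one edge alone does not free up a color, we uncolor several edges $e_1,\dots,e_t$. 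Each $e_i$ then has a list $L(e_i)$ of available colors, and we color them distinctly using Hall's marriage theorem: for every subset $S$, check $|\bigcup_{e\in S}L(e)|\ge |S|$. Where some pairs of uncolored edges do not see each other, we exploit that, or we first fix a shared color on two non-seeing edges and then apply Hall to the remaining edges.

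The second stage is discharging. Give each vertex $v$ initial charge $\mu(v)=d(v)-\frac{113}{31}$, so the total charge is negative. The rules send charge from $4^+$-vertices, and from $3$-vertices with rich neighbourhoods, to $2$-vertices and to poor $3$-vertices.
\begin{itemize}
\item A $2$-vertex needs $\frac{51}{31}$.
\item A $3$-vertex needs $\frac{20}{31}$.
\item A $4$-vertex has surplus $\frac{11}{31}$.
\item A $5^+$-vertex has a larger surplus, and all its neighbours are $3^-$-vertices.
\end{itemize}
We choose the transfer amounts so that the reducible configurations make every final charge nonnegative, which contradicts $mad(G)<\frac{113}{31}$. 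The fraction $\frac{113}{31}$ suggests that the extremal case is a $3$-vertex balancing against its $4$- and $5$-neighbours. Once the configurations are fixed, we fit the transfer amounts by solving the tight inequalities.

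The main obstacle is the reducibility of configurations around $3$-vertices. A $3$-vertex whose neighbours have degree at most $5$ creates edges that see up to about $20$ other edges, so the greedy count fails. There we will uncolor the $3$ incident edges, possibly together with neighbouring edges. We then do a careful Hall-type analysis, counting with multiplicity the colors already present at distance two, and use the fact that a single shared color between two lists increases the union bound. Finding a configuration set that is both reducible and strong enough for the discharging to close at exactly $\frac{113}{31}$ is the delicate part. We would first try configurations in which a $3$-vertex has at most one $3$-neighbour, and a $4$-vertex has at most two $3$-neighbours each having further $3^-$-neighbours.
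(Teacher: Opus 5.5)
\textbf{Verdict: genuine gap.} Your framework is essentially the paper's: a minimal counterexample, reductions by extending a partial coloring greedily or via Hall's theorem (including giving two mutually non-seeing uncolored edges a common color), and discharging from $\omega(v)=d(v)-\frac{113}{31}$. But the proposal stops exactly where the proof begins. No configuration is shown to be reducible, no discharging rule is stated, and no final charge is checked. Items (i)--(iii) are only expectations, and your last paragraph concedes that finding configurations that are both reducible and strong enough to close at $\frac{113}{31}$ is still open. Since the theorem \emph{is} the existence of such a pair, this is not an omitted routine detail.

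Some of your expectations also point the wrong way. A $2$-vertex is not merely ``tightly restricted'' but outright reducible. After deleting a $2$-vertex $u$ with neighbours of degrees $d_1,d_2$, each edge $uv_i$ sees at most $(d_i-1)(8-d_i)+d_{3-i}-1\le 17$ colored edges, so both lists have at least $3$ colors. This also rules out $6$- and $7$-vertices, whose neighbours would all have degree at most $2$. So the $\frac{51}{31}$ demand you budget for never occurs, and only degrees $3,4,5$ survive.

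What is missing is a concrete list of configurations and rules tuned to each other, and the tuning is tight.
\begin{itemize}
\item Your tentative ``a $3$-vertex has at most one $3$-neighbour'' is too weak. The paper needs the stronger fact that such a $3$-vertex has two $5$-neighbours, so it can collect $\frac{20}{31}=2\cdot\frac{10}{31}$.
\item That in turn forces the Hall-type claim that a $5$-vertex has at most one such $3(B_{weak})$-neighbour, since its surplus is exactly $\frac{42}{31}=\frac{10}{31}+4\cdot\frac{8}{31}$.
\item The paper classifies $4$-vertices by their number of $3$-neighbours. There are at most three, since four is reducible.
\item It then proves that a $3$-vertex with three $4$-neighbours is adjacent to three $4(B)$-vertices.
\item It proves that a $4(D)$-vertex has three $3(B_{strong})$-neighbours and a $4(A)$- or $4(B)$-neighbour.
\item It proves that a $4(C_{weak})$-vertex has a $4(A)$- or $4(B)$-neighbour. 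This again needs the two-extra-edge Hall argument. It also needs auxiliary exclusions to certify that the two extra edges do not see each other: no triangle through a $3$-vertex, no triangle of $4(C)$-vertices, and no $4$-cycle formed by a $3$-vertex and three $4(C)$-vertices.
\end{itemize}
Only with such a list do rules like $\frac{11}{124}$ from $4(A)$-vertices, $\frac{8}{31}$ and $\frac{1}{31}$ from $4(B)$-vertices, and $\frac{7}{31},\frac{6}{31},\frac{4}{31}$ from $4(C)$- and $4(D)$-vertices to their $3$-neighbours balance exactly. Until you specify and verify some such system, the proposal is a plan, not a proof.
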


In the proofs, we need to use Hall's marriage theorem multiple times.
\begin{theorem}[Hall \cite{Ha}]\label{hall}
    Let $S$ be a set, let $n$ be a positive integer, and let $S_1,S_2,\dots,S_n$ be $n$ subsets of $S$. We can find a system of distinct representatives of $\{S_1,S_2,\dots,S_n\}$ if and only if for any $k\in [1,n]$ and any $1\le i_1\neq i_2\neq\dots\neq i_k\le n$, we have $|\bigcup_{j=1}^k S_{i_j}|\ge k$.
\end{theorem}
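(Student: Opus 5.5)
We argue by strong induction on $n$ (the standard Halmos--Vaughan argument). Necessity is immediate. If $x_1,\dots,x_n$ is a system of distinct representatives with $x_i\in S_i$, then for any indices $i_1,\dots,i_k$ the $k$ distinct elements $x_{i_1},\dots,x_{i_k}$ all lie in $\bigcup_{j=1}^k S_{i_j}$. So that union has at least $k$ elements. The work is in sufficiency, which we prove assuming Hall's condition. For $n=1$ the condition says $|S_1|\ge 1$, and we pick any element of $S_1$. Now let $n\ge 2$ and assume the statement holds for all families of fewer than $n$ sets.

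\emph{Case 1: no tight subfamily.} Suppose every nonempty proper subfamily is strictly larger than needed: for every $1\le k<n$ and every $k$ distinct indices, $|\bigcup_{j=1}^k S_{i_j}|\ge k+1$. Choose any $x\in S_n$, which is nonempty by the condition with $k=1$. Set $S_i'=S_i\setminus\{x\}$ for $i<n$. Any $k$ of the sets $S_i'$ have union of size at least $(k+1)-1=k$. By induction, $S_1',\dots,S_{n-1}'$ have a system of distinct representatives. None of these representatives is $x$, so adjoining $x$ for $S_n$ gives one for the whole family.

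\emph{Case 2: a tight subfamily exists.} Suppose some proper subfamily, say $S_1,\dots,S_k$ with $1\le k<n$ after relabeling, has $|U|=k$ where $U=\bigcup_{j=1}^k S_j$.
\begin{itemize}
\item[(i)] This subfamily inherits Hall's condition, so by induction it has a system of distinct representatives. Since $|U|=k$, those representatives use up all of $U$.
\item[(ii)] For $i>k$ put $S_i''=S_i\setminus U$. We check Hall's condition for $S_{k+1}'',\dots,S_n''$. Take any $m$ of them, with indices in a set $I$. Apply the original condition to the $k+m$ sets $S_1,\dots,S_k$ together with $\{S_i: i\in I\}$:
\[
k+m\le \Bigl|U\cup \bigcup_{i\in I} S_i\Bigr| = k+\Bigl|\bigcup_{i\in I} S_i''\Bigr|,
\]
so $|\bigcup_{i\in I} S_i''|\ge m$.
\item[(iii)] By induction, since $n-k<n$, the sets $S_i''$ have a system of distinct representatives. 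These lie outside $U$, so they are disjoint from the representatives in (i). Combining (i) and (iii) finishes the proof.
\end{itemize}

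The only delicate step is (ii), showing that the residual family still satisfies Hall's condition once the tight union $U$ is removed. The point is to apply the hypothesis to the enlarged family that includes the whole tight subfamily, so that the $k$ elements of $U$ cancel exactly.
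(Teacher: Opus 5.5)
Your proof is correct. It is the standard Halmos--Vaughan induction:
\begin{itemize}
\item The case split (some proper subfamily is tight, versus every proper subfamily has surplus) is exhaustive under Hall's condition.
\item Step (ii), where the $k$ elements of $U$ cancel, is exactly right.
\item The argument works even if $S$ is infinite, since only $U$ needs to be finite.
\end{itemize}

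The paper does not prove this statement. It quotes Hall's theorem from \cite{Ha} and uses it as a black box in the extension arguments of Claims \ref{5cycle}, \ref{pan}, \ref{3d1}, \ref{3d2}, \ref{84cv} and \ref{85v}. Those arguments need only the sufficiency direction, for small finite sets of colors. So there is no competing argument in the paper to compare yours with.

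You also correctly read the condition $1\le i_1\neq i_2\neq\dots\neq i_k\le n$ as requiring pairwise distinct indices. Read literally, it only says consecutive indices differ, and then necessity would fail. For example, take the singletons $\{a\},\{b\},\{c\}$ with $k=3$ and indices $(1,2,1)$: the union has two elements, yet a system of distinct representatives exists.
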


\section{Proof of Theorem \ref{thm1}}

It is clear that, for a graph $G$ and its subgraph $G'$, we have $mad(G')\le mad(G)$ and $\theta(G')\le \theta(G)$. For a vertex $v\in V(G)$, let $G-v$ denote the subgraph of $G$ obtained by deleting $v$ and all the edges incident to $v$ from $G$. 

Suppose to the contrary that Theorem \ref{thm1} is not true, and $H$ is a minimal counterexample to Theorem \ref{thm1}, which means $\theta(H)\le 7$, $mad(H)<\frac{34}{11}$, and $\chi_s'(H)>13$, but for any $v\in V(H)$, we have $\chi_s'(H-v)\le 13$.

A strong $13$-edge-coloring $f:E(H-v)\longrightarrow \{1,2,\dots,13\}$ of $H-v$ is called a \emph{partial coloring} of $H$. For an edge $e\in E(H)\setminus E(H-v)$, if a color $c\in \{1,2,\dots,13\}$ has not been used on an edge in $H-v$ that $e$ sees, then on the basis of $f$, we can color $e$ by $c$. Let $L_f(e)\subseteq \{1,2,\dots,13\}$ be the set of colors that have not been used on an edge that $e$ sees, and let $l_f(e):=|L_f(e)|$. It is clear that if $e$ sees $k\le 12$ edges in $H-v$, then $l_f(e)\ge 13-k\ge 1$.

The following lemma is obvious. A formal proof can be found in \cite{Wa}.

\begin{lemma}\label{lemma}
    Let $v$ be a vertex in $H$ with $k$ edges $e_1,e_2,\dots,e_k$ incident to it. Let $f:E(H-v)\longrightarrow \{1,2,\dots,13\}$ be a strong $13$-edge-coloring of $H-v$, which is a partial coloring of $H$. If, up to re-ordering $e_1,e_2,\dots,e_k$, we have 
    \begin{align*}
        l_f(e_i)\ge i
    \end{align*}
    for each $i\in [1,k]$, then $f$ can be extended to a strong $13$-edge-coloring of $H$.
\end{lemma}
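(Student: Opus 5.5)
We color the edges $e_1,e_2,\dots,e_k$ greedily, one at a time in this order, always starting from the fixed partial coloring $f$ of $H-v$, and we argue by induction on $i$ that when we reach $e_i$ a valid color is still available.

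Suppose $e_1,\dots,e_{i-1}$ have already been colored. The colors forbidden for $e_i$ come from two sources.
\begin{itemize}
\item Edges of $H-v$ that $e_i$ sees. These colors are already excluded from $L_f(e_i)$ by definition.
\item Edges among $e_1,\dots,e_{i-1}$. Each of these shares the vertex $v$ with $e_i$, so $e_i$ sees all of them. They hold at most $i-1$ distinct colors.
\end{itemize}
The newly colored edges are all incident to $v$, and no edge of $H-v$ is recolored. So no other new constraints arise.

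Hence at least $l_f(e_i)-(i-1)\ge i-(i-1)=1$ colors of $L_f(e_i)$ remain usable. We pick one of them for $e_i$.

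We must also check that this choice is consistent with the existing coloring. The chosen color lies in $L_f(e_i)$, so it differs from every color on an edge of $H-v$ that $e_i$ sees. It also differs from the colors of $e_1,\dots,e_{i-1}$. Seeing is symmetric, so every pair of edges at distance at most $2$ involving $e_i$ receives distinct colors.

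After step $k$, every edge of $H$ is colored and the coloring is strong, so $f$ extends to a strong $13$-edge-coloring of $H$.

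Equivalently, one can apply Theorem~\ref{hall} to the lists $L_f(e_1),\dots,L_f(e_k)$. Any $j$ of these lists contain one list of size at least $j$, so a system of distinct representatives exists. Distinctness handles the mutual conflicts among the $e_i$, and membership in the lists handles the conflicts with $H-v$.

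The only point needing care is the observation that the $e_i$ are pairwise mutually visible (they share $v$), while their conflicts with $H-v$ are entirely captured by $L_f$. I expect no real obstacle.
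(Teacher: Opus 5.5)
Your proof is correct and takes the same approach as the paper, which calls the lemma obvious and defers to \cite{Wa}: color $e_1,\dots,e_k$ greedily in order, noting that $e_i$ sees the already colored $e_1,\dots,e_{i-1}$ through $v$, so at least $l_f(e_i)-(i-1)\ge 1$ colors remain. Your Hall's-theorem variant is also valid, and the one fact you assert without justification is immediate: two edges of $H-v$ see each other in $H$ only if they already do in $H-v$, because any edge joining them has one endpoint on each of them and so cannot be incident to $v$.
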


If the degree of $v\in V(H)$ is $d$, then $v$ is called a \emph{$d$-vertex}. The following claims are proved in \cite{Wa}.

\begin{claim}[Wang \cite{Wa}]
    There are no $1$-vertices, $5$-vertices, or $6$-vertices in $H$.
\end{claim}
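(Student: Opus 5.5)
The plan is to treat the three degrees separately, in each case deleting the offending vertex $v$ and using minimality of $H$. Throughout, I use two facts about a minimal counterexample: $H-v$ has a strong $13$-edge-coloring $f$, and every edge of $H$ has Ore-weight at most $7$.

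\emph{1-vertices.} Let $v$ be a $1$-vertex with neighbor $u$, and let $e=uv$.
\begin{itemize}
\item Since $d(u)\le 6$, the edge $e$ sees at most $d(u)-1$ edges at $u$.
\item Each neighbor $w\neq u$ of $u$ satisfies $d(w)\le 7-d(u)$, so each contributes at most $6-d(u)$ further edges.
\item Hence $e$ sees at most $(d(u)-1)+(d(u)-1)(6-d(u))=(d(u)-1)(7-d(u))\le 9$ edges of $H-v$.
\end{itemize}
So $l_f(e)\ge 4\ge 1$. Coloring $e$ extends $f$ to a strong $13$-edge-coloring of $H$, a contradiction.

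\emph{6-vertices.} Let $v$ be a $6$-vertex. Every neighbor of $v$ has degree $1$, so $H$ is connected only if $H$ is the star $K_{1,6}$. Its $6$ edges can be strongly colored with $6\le 13$ colors, a contradiction. (Equally, $1$-vertices are already excluded.)

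\emph{5-vertices.} Let $v$ be a $5$-vertex. Its neighbors have degree at most $2$, and they cannot have degree $1$, since $1$-vertices are excluded. So $v$ has neighbors $u_1,\dots,u_5$, each of degree $2$, and each $u_i$ has a second neighbor $w_i$ with $d(w_i)\le 5$.
\begin{itemize}
\item Delete $v$ and take the coloring $f$ of $H-v$.
\item Each edge $e_i=vu_i$ sees, in $H-v$, the edge $u_iw_i$ and the at most $4$ other edges at $w_i$.
\item It also sees nothing through the other $u_j$ beyond the edges $u_jw_j$, which add $4$ more.
\item So $e_i$ sees at most $1+4+4=9$ colored edges, giving $l_f(e_i)\ge 4$.
\end{itemize}
This falls one short of the condition $l_f(e_i)\ge i$ for $i=5$ in Lemma~\ref{lemma}, and I expect this to be the main obstacle.

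To get the missing color, I will recolor or uncolor the edges $u_iw_i$. Each pendant-type edge $u_iw_i$ sees only edges at $w_i$, the edges at the neighbors of $w_i$, and the edges of the star at $v$. The plan is:
\begin{enumerate}
\item Also erase the colors of all five edges $u_iw_i$, so ten edges are left to color.
\item Each $u_iw_i$ then sees at most $4+4\cdot 2=12$ colored edges, since the neighbors of $w_i$ have degree at most $2$.
\item Each $vu_i$ now sees at most $4$ colored edges, namely those at $w_i$.
\item Color the $u_iw_i$ greedily first.
\item The edges $vu_i$ then see the $5$ colors on the $u_jw_j$ plus at most $4$ others, leaving at least $4$ available colors each.
\end{enumerate}
The count is still tight, so I would instead:
\begin{itemize}
\item exploit coincidences of colors among the $u_jw_j$, or
\item apply Hall's theorem (Theorem~\ref{hall}) to the lists of the $vu_i$, after choosing the colors of the $u_jw_j$ to repeat whenever the $w_j$ are far apart, or to lie in the lists of the $vu_i$ sparingly.
\end{itemize}
This is the step where the careful case analysis lies.
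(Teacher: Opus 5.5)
Your $1$-vertex count $(d(u)-1)(7-d(u))\le 9$ and your $6$-vertex argument are fine (in the first bullet you mean $w\neq v$). The gap is the $5$-vertex case, which stops exactly where the real work begins.

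After deleting $v$, each $vu_i$ is only guaranteed $4$ free colors. Nothing rules out all five lists being the same $4$-set, so neither Lemma~\ref{lemma} nor Theorem~\ref{hall} applies. Erasing the five edges $u_iw_i$ and coloring them greedily first does not help: it returns you to lists of size $4$ whenever the five outer colors are distinct. Your suggested repairs (``exploit coincidences'', ``apply Hall'') are never turned into an argument. Your count that $u_iw_i$ sees $4+4\cdot 2=12$ colored edges is also wrong, because each neighbor of $w_i$ other than $u_i$ contributes only one further edge. With $12$ you would have a single free color on $u_iw_i$ and no room to force any coincidence.

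The missing idea is a reduction followed by a pigeonhole step.

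\emph{Reduction.} Suppose some $d(w_i)\le 4$ (this includes the triangle case $w_i=u_j$), or $w_i=w_j$ for some $j\neq i$. Then $vu_i$ sees at most $8$ edges of $H-v$, so $l_f(vu_i)\ge 5$. The other four lists have size at least $4$, and Lemma~\ref{lemma} finishes.

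\emph{Main case.} Otherwise the $w_i$ are five distinct $5$-vertices, all of whose neighbors are $2$-vertices. Erase the colors on the edges $u_iw_i$. Each of them now sees at most $4+4=8$ colored edges, so it has at least $5$ free colors. No two of them see each other: the $w_i$ are distinct, two $5$-vertices are non-adjacent because $\theta(H)\le 7$, and $u_i$ has only the neighbors $v$ and $w_i$.

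\emph{Pigeonhole.} Since $5\cdot 5>13$, some color is free for two of these edges. Put it on both, and color the other three from their lists. The outer edges now use at most $4$ distinct colors. So each $vu_i$ sees at most $4+4=8$ colors and has at least $5$ free colors, and the five pairwise-seeing edges $vu_i$ can be colored greedily.

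This forced repetition is the coincidence your plan needs. Without it the $5$-vertex case, and hence the claim, is not proved.
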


This claim tells us that only $2$-vertices, $3$-vertices, and $4$-vertices can be in $H$.

\begin{claim}[Wang \cite{Wa}]\label{2v}
    A $2$-vertex in $H$ is adjacent to two $4$-vertices.
\end{claim}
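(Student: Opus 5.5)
We argue by minimality of $H$. Let $v$ be a $2$-vertex with neighbors $u$ and $w$, and suppose for contradiction that one of them, say $u$, is not a $4$-vertex. Since there are no $1$-vertices, $d(u),d(w)\in\{2,3,4\}$, and $d(u)\le 3$. We delete $v$ and take a strong $13$-edge-coloring $f$ of $H-v$, which exists by the minimality of $H$. The two edges $e_1=vu$ and $e_2=vw$ are then uncolored, and we try to extend $f$ using Lemma \ref{lemma}. For this we need only an ordering with $l_f(e_1)\ge 1$ and $l_f(e_2)\ge 2$, or the reverse.

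The main step is to count how many colored edges of $H-v$ each of $e_1,e_2$ sees. The edge $vw$ sees the other $d(w)-1$ edges at $w$, and each of these has at most $d(x)-1$ further edges at its far endpoint $x$. Because $\theta(H)\le 7$, every neighbor $x$ of $w$ satisfies $d(x)-1\le 6-d(w)$. Hence the number of colored edges seen through $w$ is at most
\begin{align*}
(d(w)-1)+(d(w)-1)(6-d(w)),
\end{align*}
which is at most $12$ for $d(w)\le 4$ (the value is $12$ at $d(w)=4$). The edge $vw$ also sees the edges at $u$ other than $uv$, and there are $d(u)-1$ of them. In total,
\begin{align*}
\#\{\text{colored edges seen by } vw\}\le (d(w)-1)(7-d(w))+(d(u)-1),
\end{align*}
and symmetrically for $vu$. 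With $d(u)\le 3$ and $d(w)\le 4$, the edge $vu$ sees at most $(d(u)-1)(7-d(u))+(d(w)-1)\le 8+3=11$ colored edges, so $l_f(vu)\ge 2$. The edge $vw$ sees at most $12+2=14$, which is not yet enough. So we refine the count.

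The refinement is where the real work lies. We first remove $w$'s ability to see through the edge $wv$ itself, so only $d(w)-1$ neighbors of $w$ contribute. Next we use the fact that no $1$-vertices exist and apply $\theta$ more carefully. If this bound still fails to give $l_f(vw)\ge 1$ when $d(w)=4$ and $d(u)=3$, we instead delete $v$, choose the coloring, and then \emph{uncolor} a suitable edge. For instance, we uncolor the edges at $u$ and treat $v$ together with $u$'s edges via Hall's theorem (Theorem \ref{hall}). Alternatively, we recolor using the slack at $u$: since $d(u)\le 3$, the edges at $u$ see few edges, so they can be recolored to free a color for $vw$.

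If $d(u)=d(w)=2$, both edges see at most $1+1\cdot 4+1=6$ colored edges and extend trivially. The tight case $d(u)=3$, $d(w)=4$ is the main obstacle. There I would apply Hall's theorem to the set $\{vu,vw,ux_1,ux_2\}$, with lists computed after uncoloring $u$'s other edges. We need lower bounds such as $l(vw)\ge 1$, $l(vu)\ge 4$ and $l(ux_i)\ge 3$, and I expect the degree constraints $d(x_i)\le 4$ to yield these bounds.
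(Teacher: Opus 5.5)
Your setup is the right one: delete $v$, count the colored edges seen by $vu$ and $vw$, and apply Lemma~\ref{lemma}. Your displayed bound $(d(w)-1)(7-d(w))+(d(u)-1)$ is also correct. The problem is that you then evaluated it wrongly, and that slip creates the ``tight case'' you never close.

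At $d(w)=4$, every other neighbor of $w$ has degree at most $3$ because $\theta(H)\le 7$, so it carries at most $2$ further edges. The count through $w$ is therefore $(d(w)-1)+(d(w)-1)(6-d(w))=3+6=9$, not $12$. So for $d(u)\le 3$ the edge $vw$ sees at most $9+2=11$ colored edges, and $l_f(vw)\ge 2$. Together with your correct bound $l_f(vu)\ge 2$, Lemma~\ref{lemma} extends $f$ immediately. Every other combination with $d(u)\le 3$ and $d(w)\le 4$ gives counts no larger than $11$. No uncoloring and no appeal to Hall's theorem is needed; this one-step count is all the claim requires.

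As written, however, the proposal does not prove the claim. The case $d(u)=3$, $d(w)=4$ is handed to a Hall-type argument that is only anticipated (``I expect\dots''), and the list sizes it relies on are not available. For example, after uncoloring $ux_1$ and $ux_2$, the edge $ux_1$ can still see up to $3$ colored edges at $x_1$, $6$ at the other neighbors of $x_1$, and $3$ at $x_2$. That is up to $12$, so only $l(ux_1)\ge 1$ is guaranteed, not $l(ux_1)\ge 3$. With $ux_1$ and $ux_2$ both having a single common available color, Hall's condition fails for that pair.

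Replace the ``refinement'' paragraph and the final paragraph with the corrected count above, and the proof is complete.
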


This claim also tells us that a $3$-vertex in $H$ can only be adjacent to $3$-vertices and $4$-vertices. Thus, we can divide the $3$-vertices into four classes in the following way.

\begin{itemize}
    \item A $3$-vertex is called a $3(A)$-vertex if it is adjacent to three $4$-vertices;
    \item A $3$-vertex is called a $3(B)$-vertex if it is adjacent to two $4$-vertices and one $3$-vertex;
    \item A $3$-vertex is called a $3(C)$-vertex if it is adjacent to one $4$-vertex and two $3$-vertices;
    \item A $3$-vertex is called a $3(D)$-vertex if it is adjacent to three $3$-vertices.
\end{itemize}

\begin{claim}[Wang \cite{Wa}]\label{3dv}
    If a $3(D)$-vertex in $H$ is adjacent to another $3(D)$-vertex, then it is adjacent to two $3(B)$-vertices.
\end{claim}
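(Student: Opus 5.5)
We argue by contradiction, using the minimality of $H$ and Lemma \ref{lemma} applied at the $3(D)$-vertex itself. Let $u$ be a $3(D)$-vertex adjacent to a $3(D)$-vertex $v$, and let $w_1,w_2$ be the other two neighbors of $u$. Since $u$ is $3(D)$, both $w_1$ and $w_2$ are $3$-vertices. Each $w_i$ is adjacent to the $3$-vertex $u$, so it is not a $3(A)$-vertex. Hence it suffices to show that neither $w_1$ nor $w_2$ is a $3(C)$- or $3(D)$-vertex. By symmetry, suppose $w_1$ is a $3(C)$- or $3(D)$-vertex. Then the two neighbors of $w_1$ other than $u$ have degrees $d_1,d_2$ with $d_1+d_2\le 7$, since at most one of them is a $4$-vertex.

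By minimality, $H-u$ has a strong $13$-edge-coloring $f$, which is a partial coloring of $H$. We bound from above the number of edges of $H-u$ seen by each of $uv$, $uw_1$, $uw_2$. We simply add up the edges incident to the neighbors of the endpoints; any coincidences (for instance $w_1$ adjacent to $v$, or shared second neighbors) only decrease the true count, so they can be ignored.

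\begin{itemize}
\item \textbf{Edge $uv$.} It sees the $2$ other edges at $v$. The two other neighbors of $v$ are $3$-vertices, contributing at most $2+2$ edges. Finally, $w_1$ and $w_2$ contribute $2+2$ edges. This totals at most $10$, so $l_f(uv)\ge 3$.
\item \textbf{Edge $uw_1$.} It sees the $2$ other edges at $w_1$, at most $(d_1-1)+(d_2-1)\le 5$ edges at the other neighbors of $w_1$, the $2$ other edges at $v$, and the $2$ other edges at $w_2$. This totals at most $11$, so $l_f(uw_1)\ge 2$.
\item \textbf{Edge $uw_2$.} The same count with the neighbors of $w_2$ having degree sum at most $8$ gives at most $2+6+2+2=12$ edges, so $l_f(uw_2)\ge 1$.
\end{itemize}

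With the ordering $e_1=uw_2$, $e_2=uw_1$, $e_3=uv$, Lemma \ref{lemma} extends $f$ to a strong $13$-edge-coloring of $H$. This contradicts $\chi_s'(H)>13$. Hence both $w_1$ and $w_2$ are $3(B)$-vertices, which proves the claim.

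The only delicate point is the count for $uw_1$, which is where the hypothesis on $w_1$ is used. If $w_1$ were $3(B)$, its other neighbors would contribute $3+3$ edges, and the bound would give only $l_f(uw_1)\ge 1$, which is not enough. The remaining work is routine: checking that every edge seen by an edge at $u$ lies within distance one of $u$'s neighbors, so that the tallies above are complete.
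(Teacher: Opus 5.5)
Your proof is correct: deleting $u$ and bounding the edges of $H-u$ seen by $uv$, $uw_1$, $uw_2$ by $10$, $11$, $12$ gives $l_f\ge 3,2,1$, so Lemma \ref{lemma} applies. These bounds use that $v$ is $3(D)$ and that $w_1$ has at most one $4$-neighbor, and the paper only cites \cite{Wa} for this claim, but your argument is the same vertex-deletion counting method it uses for Claims \ref{triangle} and \ref{4cycle}.
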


This claim also implies that a $3(D)$-vertex in $H$ is adjacent to at most one $3(D)$-vertex.

\begin{claim}[Wang \cite{Wa}]\label{4v}
    A $4$-vertex in $H$ is adjacent to at most one $2$-vertex.
\end{claim}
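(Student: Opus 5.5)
Suppose, for contradiction, that some $4$-vertex $v$ of $H$ is adjacent to two $2$-vertices $x$ and $y$. Let its other neighbours be $a$ and $b$; each of them has degree at most $3$ because $\theta(H)\le 7$. By Claim~\ref{2v}, the other neighbour $x'$ of $x$ and the other neighbour $y'$ of $y$ are $4$-vertices. We will not delete $v$. Instead we delete the $2$-vertex $x$ and color $H-x$ by minimality. This gives a partial coloring $f$. We then extend $f$ to the two edges $xv$ and $xx'$ using Lemma~\ref{lemma} applied to the vertex $x$ with $k=2$. So it is enough to show that one of these edges has at least one available color and the other has at least two.

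Next we count the colored edges in $H-x$ that each edge sees. The edge $xv$ sees the three other edges at $v$, namely $vy$, $va$, $vb$. It also sees the edges at their far ends: one more edge at $y$ (the edge $yy'$), at most two at $a$, and at most two at $b$. Finally it sees the at most three other edges at $x'$. That is at most $3+1+2+2+3=11$ colored edges, so $l_f(xv)\ge 13-11=2$. The edge $xx'$ sees the three other edges at $x'$, up to $3\cdot 3=9$ edges beyond them (neighbours of $x'$ have degree at most $3$), and the three other edges at $v$. That is at most $15$ edges, which is too many to give a useful bound.

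So we need some saving for $xx'$. The natural fix is to use the colors themselves rather than just counting edges. I would uncolor $vy$ as well, which is the key trick. Then the edges left to color are $xv$, $xx'$, $vy$. The edge $xx'$ now sees at most $14$ colored edges, and $vy$ sees at most $3$ (other at $v$, excluding $xv$) $+\,4$ (at $a$, $b$) $+\,3$ (at $y'$) $=10$ colored edges, so $l_f(vy)\ge 3$. Also $l_f(xv)\ge 3$, since $vy$ is now uncolored.

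The main obstacle is $xx'$, which may have no free color at all. Two ideas handle it. First, the edges $va$, $vb$ and the edges at $x'$ are all pairwise distinct and colored in a strong coloring. Second, we can recolor $xx'$'s neighbourhood freely, or choose which $2$-vertex to delete. I would try to delete $v$ itself instead. Then $xv$ sees only $xx'$ and the three edges at $x'$, so $l_f(xv)\ge 9$, and likewise $l_f(yv)\ge 9$. The edges $va$ and $vb$ each see at most $2+2\cdot 3+2$ (the other edge at $x$ and at $y$) $=10$ colored edges, so $l_f(va),l_f(vb)\ge 3$. Since every count here is $\ge 2$ and two of them are $\ge 9$, ordering the edges as $va, vb, xv, yv$ gives $l_f\ge 1,2,3,4$. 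Lemma~\ref{lemma} then extends $f$, which contradicts $\chi_s'(H)>13$. This vertex-$v$ deletion is the plan I would commit to. The only check needed is the bound on the degrees of the neighbours of $a$ and $b$, namely $d\le 4$, which gives at most $3$ further edges each beyond $v$. That bound is already used in the count above.
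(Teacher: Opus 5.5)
\textbf{There is a genuine gap: the plan you commit to (deleting $v$) fails, because the edge counts in your last paragraph leave out edges that are seen through the uncolored edges at $v$.}

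Seeing is measured in $H$, not in $H-v$. For instance, $xv$ and $yy'$ are joined by $vy$, so $xv$ must avoid the color of $yy'$ even though $vy$ is uncolored. Counted correctly, $xv$ sees $xx'$, the three other edges at $x'$, $yy'$, and up to two further edges at each of $a$ and $b$. That is up to $9$ colored edges, so $l_f(xv)\ge 4$, not $9$; this error is harmless.

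The fatal omission is in $va$. Besides the $2+6$ edges on $a$'s side and $xx'$, $yy'$, it also sees the up to two other edges at $b$, via $vb$. So $va$ sees up to $12$ colored edges and you only get $l_f(va)\ge 1$, and likewise $l_f(vb)\ge 1$. This bound of $12$ is attained when $a$ and $b$ are $3$-vertices whose other neighbours are $4$-vertices. With guaranteed list sizes $1,1,4,4$, no ordering satisfies the hypothesis of Lemma~\ref{lemma}, since the second edge needs two colors. Nothing in your argument rules out $L_f(va)=L_f(vb)=\{c\}$, and $va$, $vb$ must receive distinct colors.

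\textbf{The fix is your own first approach, which was correct; you abandoned it over an arithmetic slip.} After deleting the $2$-vertex $x$, the three neighbours of $x'$ other than $x$ have degree at most $3$, as you yourself note. So each contributes at most $2$ further edges, not $3$. Hence $xx'$ sees at most $3+3\cdot 2+3=12$ colored edges, and $l_f(xx')\ge 1$.

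Your bound $l_f(xv)\ge 2$ is correct. The saving of one edge there comes exactly from $y$ being a $2$-vertex; for a generic $2$-vertex both edges would only get $\ge 1$. Together these let Lemma~\ref{lemma}, with the order $xx'$, $xv$, extend $f$ to a strong $13$-edge-coloring of $H$, which is a contradiction.

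The paper does not reprove this claim; it cites \cite{Wa}. This short reduction at $x$ is all that is needed, so the detours through uncoloring $vy$ and deleting $v$ should simply be dropped.
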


Now, we use Lemma \ref{lemma} to prove some new claims revealing more structural information about $H$.

\begin{claim}\label{triangle}
    A triangle cannot be a subgraph of $H$.
\end{claim}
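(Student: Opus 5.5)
The plan is to first pin down which degree patterns a triangle could have, and then delete a well-chosen $3$-vertex of the triangle and extend the resulting partial coloring with Lemma \ref{lemma}. By the first claim of this section every vertex of $H$ has degree $2$, $3$ or $4$. Since $\theta(H)\le 7$, no two $4$-vertices are adjacent. By Claim \ref{2v}, a $2$-vertex has two $4$-neighbors, and these two neighbors cannot be adjacent to each other. So no triangle contains a $2$-vertex. A triangle also contains at most one $4$-vertex. Hence a triangle $uvw$ is either of type $(3,3,3)$ or of type $(4,3,3)$. In both cases I choose $v$ to be a $3$-vertex of the triangle, with $u$ the $4$-vertex in the second case. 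Let $x$ be the third neighbor of $v$, so $d(x)\le 4$.

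By minimality, $H-v$ has a strong $13$-edge-coloring $f$. I then bound, for each of $vu$, $vw$, $vx$, the number of edges of $H-v$ that it sees. Such an edge $vy$ sees the edges of $H-v$ incident to $u$, $w$ or $x$, together with the edges incident to neighbors of $y$. The triangle edge $uw$ is counted only once, and this overlap is what gives the savings.

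\emph{Type $(3,3,3)$.} Write $u'$, $w'$ for the third neighbors of $u$, $w$.
\begin{itemize}
\item $vu$ sees $uw$, $uu'$, $ww'$, at most $3$ edges at $x$, and at most $3$ further edges at $u'$ (as $d(u')\le 4$). That is at most $9$ edges, so $l_f(vu)\ge 4$. Symmetrically $l_f(vw)\ge 4$.
\item $vx$ sees those same $3$ edges at $u$ and $w$, plus at most $3$ edges at $x$, plus edges at the other neighbors of $x$. If $d(x)=4$, these neighbors have degree at most $3$ and contribute at most $2$ each, so at most $6$. If $d(x)=3$, they contribute at most $2\cdot 3=6$. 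So $vx$ sees at most $12$ edges and $l_f(vx)\ge 1$.
\end{itemize}

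\emph{Type $(4,3,3)$.}
\begin{itemize}
\item $vu$ sees the $3$ edges of $H-v$ at $u$, the edge $ww'$, at most $3$ edges at $x$, and at most $2$ further edges at each of the two other neighbors of $u$. These neighbors have degree at most $3$ because they are adjacent to a $4$-vertex. The total is at most $11$, so $l_f(vu)\ge 2$.
\item $vw$ sees $wu$, $ww'$, the two other edges at $u$, at most $3$ edges at $x$, and at most $3$ edges at $w'$. The total is at most $10$, so $l_f(vw)\ge 3$.
\item $vx$ sees at most $12$ edges exactly as in the first type, so $l_f(vx)\ge 1$.
\end{itemize}

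In both cases the ordering $e_1=vx$, $e_2=vu$, $e_3=vw$ satisfies $l_f(e_i)\ge i$. By Lemma \ref{lemma}, $f$ extends to a strong $13$-edge-coloring of $H$, which contradicts the choice of $H$.

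The main obstacle is the edge $vx$, which has no triangle savings. Its bound of $12$ uses $\theta(H)\le 7$ carefully: when $d(x)=4$, all neighbors of $x$ have degree at most $3$. If some count comes out one too large, I would fall back on using that $x$ is not a $2$-vertex's neighbor pattern, or recolor via Hall's theorem (Theorem \ref{hall}).
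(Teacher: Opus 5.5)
Your type $(3,3,3)$ case is correct and matches the paper. The type $(4,3,3)$ case, however, has a genuine gap at the edge $vx$.

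You claim $vx$ sees at most $12$ edges ``exactly as in the first type'', but that is false here. In the first type, the colored edges at $N(v)\setminus\{x\}=\{u,w\}$ were the three edges $uw,uu',ww'$. Now $u$ is a $4$-vertex, so they are the four edges $uw$, $uu_1$, $uu_2$, $ww'$, where $u_1,u_2$ are the other neighbors of $u$. If $d(x)=4$, the $x$-side contributes $3+3\cdot 2=9$, so $vx$ can see $13$ colored edges and you only get $l_f(vx)\ge 0$. Your count of $12$ holds only when $d(x)=3$. Swapping the roles of $v$ and $w$ does not help when both third neighbors $x$ and $w'$ are $4$-vertices, which nothing proved so far excludes. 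Your closing fallback (``recolor via Hall's theorem'') is not an argument.

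The missing idea, which is what the paper does, is to erase the color of the triangle edge $uw$, which $vx$ sees. In the resulting partial coloring $f'$:
\begin{itemize}
\item $vx$ sees at most $12$ colored edges, so $l_{f'}(vx)\ge 1$.
\item $vu$ and $vw$ each lose one colored edge, giving $l_{f'}(vu)\ge 3$ and $l_{f'}(vw)\ge 4$.
\item $uw$ sees only $uu_1,uu_2,ww'$, at most $2$ further edges at each of $u_1,u_2$, and at most $3$ further edges at $w'$. That is at most $10$, so $l_{f'}(uw)\ge 3$.
\end{itemize}
Color greedily in the order $vx,vu,uw,vw$; the argument behind Lemma \ref{lemma} does not need the four edges to share a vertex. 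This finishes the proof, and no Hall argument is needed.
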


\begin{proof}
    Suppose to the contrary that $H$ has a triangle subgraph whose vertices are $x_1$, $x_2$, and $x_3$. We know that this triangle does not have a $2$-vertex, because otherwise, by Claim \ref{2v}, the other two vertices in this triangle are both $4$-vertices, but two $4$-vertices cannot be adjacent to each other, as $\theta(H)\le 7$. So, there are two cases.

    \textbf{Case 1.} This triangle has three $3$-vertices.

    Both $x_1 x_2$ and $x_1 x_3$ are edges incident to $x_1$, and we let the third edge incident to $x_1$ be $x_1 y$. By the minimality of $H$, we know that $H-x_1$ has a strong $13$-edge-coloring $f$. But now, it is easy to check that $x_1 y$ sees at most $12$ edges in $H-x_1$, which means $l_f(x_1 y)\ge 1$; $x_1 x_2$ sees at most $9$ edges in $H-x_1$, which means $l_f(x_1 x_2)\ge 4$; and $x_1 x_3$ sees at most $9$ edges in $H-x_1$, which means $l_f(x_1 x_3)\ge 4$. Thus, by Lemma \ref{lemma}, we can extend $f$ to a strong $13$-edge-coloring of $H$, a contradiction.

    \textbf{Case 2.} This triangle has one $4$-vertex and two $3$-vertices.

    Let $x_1$ be the $4$-vertex and let $x_2$ and $x_3$ be the two $3$-vertices. Let $x_2 z$ be the third edge incident to $x_2$ (the other two are $x_2 x_1$ and $x_2 x_3$). By the minimality of $H$, we know that $H-x_2$ has a strong $13$-edge-coloring $f$, which is a partial coloring of $H$. It is easy to check that $x_2 z$ sees at most $13$ edges (including $x_1 x_3$) in $H-x_2$, which is not good because it only implies $l_f(x_2 z)\ge 0$; $x_2 x_1$ sees at most $11$ edges (including $x_1 x_3$) in $H-x_2$, which means $l_f(x_2 x_1)\ge 2$; and $x_2 x_3$ sees at most $10$ edges (including $x_1 x_3$) in $H-x_2$, which means $l_f(x_2 x_3)\ge 3$. 
    
    In $H-x_2$, $x_1 x_3$ sees at most $10$ edges. Now we erase the color on $x_1 x_3$ under $f$ to get a new partial coloring $f'$. Under $f'$, $x_2 z$ sees at most $12$ edges with colors, which means $l_{f'}(x_2 z)\ge 1$; $x_2 x_1$ sees at most $10$ edges with colors, which means $l_{f'}(x_2 x_1)\ge 3$; $x_2 x_3$ sees at most $9$ edges with colors, which means $l_{f'}(x_2 x_3)\ge 4$; and $x_1 x_3$ sees at most $10$ edges with colors, which means $l_{f'}(x_1 x_3)\ge 3$. Thus, by Lemma \ref{lemma}, we can extend $f'$ to a strong $13$-edge-coloring of $H$, a contradiction.
    
    In either case, we get a contradiction, so a triangle cannot be a subgraph of $H$.
\end{proof}

For a cycle with vertices $x_1,x_2,\dots,x_n$ and edges $x_1 x_2,\,x_2 x_3,\,\dots,\,x_{n-1} x_n,\,x_n x_1$, we denote it by $x_1\sim x_2\sim \dots \sim x_n\sim x_1$.

\begin{claim}\label{4cycle}
    A $4$-cycle $x_1\sim x_2\sim x_3\sim x_4\sim x_1$, where $x_1$ is a $3(D)$-vertex, and $x_2$, $x_3$, and $x_4$ are $3$-vertices, cannot be a subgraph of $H$.
\end{claim}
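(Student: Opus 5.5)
The plan is to delete the $3(D)$-vertex $x_1$, take a strong $13$-edge-coloring $f$ of $H-x_1$ given by the minimality of $H$, and extend $f$ to the three edges at $x_1$ using Lemma \ref{lemma}, exactly as in Case 1 of Claim \ref{triangle}. Since $x_1$ is a $3(D)$-vertex, its third neighbor, call it $y$, is also a $3$-vertex, and its three incident edges are $x_1x_2$, $x_1x_4$ and $x_1y$. By Claim \ref{triangle}, $H$ has no triangle. Hence $y\notin\{x_2,x_3,x_4\}$, $x_2x_4\notin E(H)$ and $x_1x_3\notin E(H)$, so the edges around the cycle are as pictured. Write $x_2z$, $x_3w$ and $x_4u$ for the third edges at $x_2$, $x_3$ and $x_4$ respectively.

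The key step is counting, for each edge $e$ at $x_1$, the number of edges of $H-x_1$ that $e$ sees. Such an edge is incident to a neighbor of an endpoint of $e$, every vertex involved has degree at most $3$, and we only count edges of $H-x_1$.
\begin{itemize}
\item For $x_1x_2$, the edges seen are the two edges $x_2x_3,x_2z$ at $x_2$, the edges at $x_3$ other than $x_2x_3$, the edges at $z$ other than $x_2z$, and the edges at $x_4$ and at $y$ not incident to $x_1$. Because $x_3x_4$ is counted both at $x_3$ and at $x_4$, this gives at most $2+2+2+1+2=9$ edges, so $l_f(x_1x_2)\ge 4$.
\item By the symmetry $x_2\leftrightarrow x_4$, we likewise get $l_f(x_1x_4)\ge 4$.
\item For $x_1y$, the edges seen are the two edges at $y$, the at most four further edges at the two other neighbors of $y$, and the two edges at each of $x_2$ and $x_4$ not incident to $x_1$. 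This is at most $10$ edges, so $l_f(x_1y)\ge 3$.
\end{itemize}
Ordering the edges as $e_1=x_1y$, $e_2=x_1x_2$, $e_3=x_1x_4$, we have $l_f(e_i)\ge i$ for each $i\in[1,3]$. Lemma \ref{lemma} then extends $f$ to a strong $13$-edge-coloring of $H$, contradicting the choice of $H$.

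The only real obstacle is making the edge counts airtight. This means checking which coincidences among $z,w,u,y$ and the neighbors of $y$ are possible, and confirming that every coincidence only lowers the counts. Coincidences never increase the number of distinct edges, so this check should be routine. I will also verify the count for $x_1y$ carefully.

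The counting suggests that three $3$-vertex neighbors already give margin: a generic $3$-vertex adjacent to three $3$-vertices seems to have each incident edge seeing at most $10$ colored edges, so every list has size at least $3$. If that holds, the $4$-cycle only adds slack, because the shared edge $x_3x_4$ (and symmetrically $x_2x_3$) is counted once instead of twice. I will state the proof using the cycle, so it is clearly self-contained and robust to any slip in the generic count.
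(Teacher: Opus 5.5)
Your strategy is the paper's: delete $x_1$, bound the three lists, and apply Lemma~\ref{lemma} with $x_1y$ placed first. However, your edge counts rest on a false premise, namely that every vertex involved has degree at most $3$. Since $\theta(H)\le 7$, a $3$-vertex may have $4$-neighbors, so $z$, $u$, and the two other neighbors of $y$ can all be $4$-vertices (for instance, $y$ may be a $3(B)$-vertex).

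Redoing the counts:
\begin{itemize}
\item $x_1x_2$ sees up to $2+2+3+1+2=10$ edges of $H-x_1$, where the $3$ comes from $z$. So you only get $l_f(x_1x_2)\ge 3$, and symmetrically $l_f(x_1x_4)\ge 3$.
\item $x_1y$ sees up to $2+3+3+2+2=12$ edges, so you only get $l_f(x_1y)\ge 1$.
\end{itemize}
These are exactly the bounds in the paper. Your ordering $e_1=x_1y$, $e_2=x_1x_2$, $e_3=x_1x_4$ needs only $l_f(e_i)\ge i$, so the corrected bounds still suffice. Once the numbers are fixed, your argument coincides with the paper's.

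Discard your closing heuristic, because it is false. For a $3(D)$-vertex with no additional structure, each incident edge can see $12$ colored edges, so only lists of size $1$ are guaranteed. If all three lists always had size at least $3$, every $3(D)$-vertex would be reducible by Lemma~\ref{lemma}. Claims~\ref{3dv}, \ref{3d1}, \ref{3d2} and the $3(D)$ part of the discharging would then be unnecessary.

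The $4$-cycle is not slack; it is the heart of the argument. For $x_1x_2$ it saves exactly two edges:
\begin{itemize}
\item one because $x_3$ is a $3$-vertex rather than a possible $4$-vertex;
\item one because $x_3x_4$ is reached both through $x_3$ and through $x_4$.
\end{itemize}
That brings $12$ down to $10$, and likewise for $x_1x_4$, which is what makes Lemma~\ref{lemma} applicable.
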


\begin{proof}
    Suppose to the contrary that there is such a $4$-cycle in $H$. Let $x_1 y$ be the third edge incident to $x_1$ (the other two are $x_1 x_2$ and $x_1 x_4$). Note that $y\neq x_3$, because otherwise we have a triangle forbidden by Claim \ref{triangle}. As $x_1$ is a $3(D)$-vertex, we know that $y$ is a $3$-vertex. By the minimality of $H$, we know that $H-x_1$ has a strong $13$-edge-coloring $f$. But now, it is easy to check that $x_1 y$ sees at most $12$ edges in $H-x_1$, which means $l_f(x_1 y)\ge 1$; $x_1 x_2$ sees at most $10$ edges in $H-x_1$, which means $l_f(x_1 x_2)\ge 3$; and $x_1 x_4$ sees at most $10$ edges in $H-x_1$, which means $l_f(x_1 x_4)\ge 3$. Thus, by Lemma \ref{lemma}, we can extend $f$ to a strong $13$-edge-coloring of $H$, a contradiction.
\end{proof}

\begin{claim}\label{5cycle}
    A $5$-cycle $x_1\sim x_2\sim x_3\sim x_4\sim x_5\sim x_1$, where $x_1$, $x_3$, and $x_4$ are $3(D)$-vertices, and $x_2$ and $x_5$ are $3$-vertices, cannot be a subgraph of $H$.
\end{claim}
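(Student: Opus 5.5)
The plan is to argue as in Claims \ref{triangle} and \ref{4cycle}. We delete one vertex of the $5$-cycle, take a strong $13$-edge-coloring of the smaller graph by minimality of $H$, and apply Lemma \ref{lemma} to the three edges at the deleted vertex. I would delete $x_3$ rather than $x_1$. This is because both neighbors $x_2,x_4$ of $x_3$ on the cycle lie close to the other $3(D)$-vertices, and $3(D)$-vertices have only $3$-vertex neighbors, which keeps the counts small.

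\textbf{Setup.} Suppose such a $5$-cycle exists. By Claim \ref{triangle} the five vertices are distinct and the cycle has no chords, since any chord of a $5$-cycle creates a triangle. Let $x_iy_i$ be the third edge at $x_i$ for each $i$; each $y_i$ lies off the cycle. Since $x_1,x_3,x_4$ are $3(D)$-vertices, $y_1,y_3,y_4$ are $3$-vertices. Since $x_2,x_5$ are $3$-vertices, $y_2$ and $y_5$ have degree at most $4$. Also all neighbors of $y_3$ have degree at most $4$, because $\theta(H)\le 7$. Let $f$ be a strong $13$-edge-coloring of $H-x_3$. Coincidences among the $y_i$ or their neighbors can only decrease the counts below, so it suffices to bound the counts in the generic situation.

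\textbf{Counting.} I expect the following bounds on the number of edges of $H-x_3$ that each edge at $x_3$ sees.
\begin{itemize}
\item The edge $x_3x_4$ sees $x_4x_5,x_4y_4$, then the two further edges at each of $x_5$ and $y_4$, then $x_2x_1,x_2y_2$ and the two further edges at $y_3$. This is at most $10$ edges, so $l_f(x_3x_4)\ge 3$.
\item The edge $x_3x_2$ sees $x_2x_1,x_2y_2$, then $x_1x_5,x_1y_1$, at most $3$ further edges at $y_2$, then $x_4x_5,x_4y_4$ and the $2$ further edges at $y_3$. This is at most $11$ edges, so $l_f(x_3x_2)\ge 2$.
\item The edge $x_3y_3$ sees the $2$ other edges at $y_3$, at most $3$ further edges at each neighbor of $y_3$ (at most $6$ in total), and $x_2x_1,x_2y_2,x_4x_5,x_4y_4$. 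This is at most $12$ edges, so $l_f(x_3y_3)\ge 1$.
\end{itemize}

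\textbf{Conclusion.} With the ordering $e_1=x_3y_3$, $e_2=x_3x_2$, $e_3=x_3x_4$ we have $l_f(e_i)\ge i$, so Lemma \ref{lemma} extends $f$ to a strong $13$-edge-coloring of $H$. This contradicts the choice of $H$.

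The main obstacle is the tightness of the count for $x_3x_2$. There $y_2$ may be a $4$-vertex, and all the hypotheses are needed to reach $11$: the $3(D)$-property of $x_1$ and $x_4$, which bounds the edges at $y_4$, and the absence of chords from Claim \ref{triangle}. If that count failed, for example because of an overlooked adjacency, the fallback would be the trick from Case 2 of Claim \ref{triangle}. We would erase the color of a nearby edge such as $x_4x_5$, which itself sees few colored edges because $x_4$ and $x_5$ both have degree $3$, and then apply Lemma \ref{lemma} to four edges.
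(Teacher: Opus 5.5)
Your proof is correct, and it takes a genuinely shorter route than the paper's. The paper deletes $x_1$, where the three edges only get $l_f\ge 1,2,2$. It therefore has to check that $x_1y$ does not see $x_3x_4$ (via Claims~\ref{triangle} and~\ref{4cycle}), erase the color of $x_3x_4$, and finish with a two-case argument using Theorem~\ref{hall}.

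You instead delete $x_3$, one of the two \emph{adjacent} $3(D)$-vertices. Then every neighbor of $x_3$ and of $x_4$ is a $3$-vertex, and your counts $12,11,10$ for $x_3y_3,x_3x_2,x_3x_4$ are right. So Lemma~\ref{lemma} applies directly, with no recoloring, no Hall, and no appeal to Claims~\ref{triangle} or~\ref{4cycle}.

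Your count in fact only uses two things: that $x_3,x_4$ are adjacent $3(D)$-vertices, and that $x_2$ has a second $3$-neighbor $x_1$. That is exactly the situation ruled out by Claim~\ref{3dv}, which forces $x_2$ to be a $3(B)$-vertex. This is impossible, because $x_2$ is adjacent to the two $3$-vertices $x_1$ and $x_3$. So the statement is really an immediate corollary of Claim~\ref{3dv}, and your argument re-derives that case of it.

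What the paper's route buys is a template it reuses for Claim~\ref{pan}. There $x_1$ and $x_4$ are not adjacent, and a direct count at either of them falls short of Lemma~\ref{lemma}.

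One small correction to your closing remark: neither the $3(D)$-property of $x_1$ (only $d(x_1)=3$ is used) nor the absence of chords is needed. The fallback you describe is also unnecessary.
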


\begin{proof}
    Suppose to the contrary that there is such a $5$-cycle in $H$. Let $x_1 y$ be the third edge incident to $x_1$ (the other two are $x_1 x_2$ and $x_1 x_5$). As $x_1$ is a $3(D)$-vertex, we know that $y$ is a $3$-vertex. 
    
    We know that $x_1 y$ does not see $x_3 x_4$, because:
    \begin{itemize}
        \item If $x_1 y$ and $x_3 x_4$ are incident, then $y=x_3$ or $y=x_4$, which means $x_1\sim x_2\sim x_3\sim x_1$ or $x_1\sim x_4\sim x_5\sim x_1$ is a triangle, contradicting Claim \ref{triangle}.
        \item If $x_1 y$ and $x_3 x_4$ are not incident, but are joined by another edge, then $y$ is adjacent to $x_3$ or $x_4$, which means $x_1\sim x_2\sim x_3\sim y\sim x_1$ or $x_1\sim x_5\sim x_4\sim y\sim x_1$ is a $4$-cycle forbidden by Claim \ref{4cycle}, a contradiction.
    \end{itemize}
    
    By the minimality of $H$, we know that $H-x_1$ has a strong $13$-edge-coloring $f$. It is easy to check that $x_1 y$ sees at most $12$ edges (excluding $x_3 x_4$) in $H-x_1$, which means $l_f(x_1 y)\ge 1$; $x_1 x_2$ sees at most $11$ edges (including $x_3 x_4$) in $H-x_1$, which means $l_f(x_1 x_2)\ge 2$; and $x_1 x_5$ sees at most $11$ edges (including $x_3 x_4$) in $H-x_1$, which means $l_f(x_1 x_5)\ge 2$.

    In $H-x_1$, $x_3 x_4$ sees at most $10$ edges. Now we erase the color on $x_3 x_4$ under $f$ to get a new partial coloring $f'$. Under $f'$, $x_1 y$ sees at most $12$ edges with colors, which means $l_{f'}(x_1 y)\ge 1$; $x_1 x_2$ sees at most $10$ edges with colors, which means $l_{f'}(x_1 x_2)\ge 3$; $x_1 x_5$ sees at most $10$ edges with colors, which means $l_{f'}(x_1 x_5)\ge 3$; and $x_3 x_4$ sees at most $10$ edges with colors, which means $l_{f'}(x_3 x_4)\ge 3$.

    As $x_1 y$ does not see $x_3 x_4$, we may color them by the same color. There are two cases.

    \textbf{Case 1.} $L_{f'}(x_1 y)\cap L_{f'}(x_3 x_4)\neq\emptyset$.

    In this case, first, we use one color in $L_{f'}(x_1 y)\cap L_{f'}(x_3 x_4)$ on both $x_1 y$ and $x_3 x_4$. Then, there are at least two colors available for $x_1 x_2$ and at least two colors available for $x_1 x_5$, so we can also find feasible colors for $x_1 x_2$ and $x_1 x_5$, and extend $f'$ to a strong $13$-edge-coloring of $H$, a contradiction.

    \textbf{Case 2.} $L_{f'}(x_1 y)\cap L_{f'}(x_3 x_4)=\emptyset$.

    In this case, we invoke Hall's marriage theorem (Theorem \ref{hall}). Let $S_1=L_{f'}(x_1 y)$, $S_2=L_{f'}(x_1 x_2)$, $S_3=L_{f'}(x_1 x_5)$, and $S_4=L_{f'}(x_3 x_4)$. It is clear that, for any $k\in \{1,2,3\}$ and any $1\le i_1\neq i_2\neq\dots\neq i_k\le 4$, we have $|\bigcup_{j=1}^k S_{i_j}|\ge k$. Then, for $k=4$, we have $|S_1\cup S_2\cup S_3\cup S_4|\ge |S_1\cup S_4|=|L_{f'}(x_1 y)\cup L_{f'}(x_3 x_4)|\ge 4$, as $L_{f'}(x_1 y)$ and $L_{f'}(x_3 x_4)$ are disjoint. Thus, by Hall's marriage theorem, we can find a system of distinct representatives of $\{S_1,S_2,S_3,S_4\}$, which means we can find feasible colors for $x_1 y$, $x_1 x_2$, $x_1 x_5$, and $x_3 x_4$ to extend $f'$ to a strong $13$-edge-coloring of $H$, a contradiction.

    In either case, we get a contradiction, so such a $5$-cycle cannot be a subgraph of $H$.
\end{proof}

\begin{claim}\label{pan}
    The graph shown in Figure \ref{fig1}, where $y$ is a $3(C)$-vertex or $3(D)$-vertex, $x_1$ and $x_4$ are $3(D)$-vertices, and $x_2$, $x_3$, and $x_5$ are $3$-vertices, cannot be a subgraph of $H$.
\end{claim}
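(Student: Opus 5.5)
I read Figure \ref{fig1} as the $5$-cycle $x_1\sim x_2\sim x_3\sim x_4\sim x_5\sim x_1$ with a pendant edge $x_1y$ at $x_1$. This matches Claim \ref{5cycle}, except that $x_3$ is no longer required to be $3(D)$ and the hypothesis is moved onto the pendant vertex $y$. I cannot see the figure, so if the edges differ the counts below must be redone.

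I follow the scheme of Claim \ref{5cycle}. By minimality, $H-x_1$ has a strong $13$-edge-coloring $f$, and the edges to recolor are $x_1y$, $x_1x_2$ and $x_1x_5$. All of $x_1$'s neighbours are $3$-vertices, since $x_1$ is $3(D)$.

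\textbf{Step 1: $x_1y$ does not see $x_3x_4$.} If the two edges were incident, then $y\in\{x_3,x_4\}$ and we would get a triangle, contradicting Claim \ref{triangle}. If $y$ were adjacent to $x_3$ or $x_4$, then $x_1\sim x_2\sim x_3\sim y\sim x_1$ or $x_1\sim x_5\sim x_4\sim y\sim x_1$ would be a $4$-cycle. This cycle has $x_1$ as a $3(D)$-vertex and all other vertices are $3$-vertices, which Claim \ref{4cycle} forbids.

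\textbf{Step 2: count the lists.} This is where the hypothesis on $y$ is used. Since $y$ is $3(C)$ or $3(D)$ and is adjacent to the $3$-vertex $x_1$, the two other neighbours of $y$ have total degree at most $7$.
\begin{itemize}
\item In $H-x_1$, the edge $x_1y$ sees the $2$ other edges at $y$ and at most $3+2=5$ further edges at $y$'s other neighbours. It also sees the $2$ other edges at each of $x_2$ and $x_5$. That is at most $11$ colored edges, so $l_f(x_1y)\ge 2$.
\item For $x_1x_2$, the colored edges seen are at most $2$ at $x_2$, then $2$ at $x_3$ and at most $3$ at the other neighbour of $x_2$, then $2$ at $y$ and $2$ at $x_5$. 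That is at most $11$, and $x_3x_4$ is among them. The count for $x_1x_5$ is symmetric, with $x_4$ a $3$-vertex.
\end{itemize}

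\textbf{Step 3: uncolor $x_3x_4$.} Let $f'$ be $f$ with the color on $x_3x_4$ erased. Then $l_{f'}(x_1x_2)\ge 3$, $l_{f'}(x_1x_5)\ge 3$ and $l_{f'}(x_1y)\ge 2$. In $H-x_1$ the edge $x_3x_4$ sees at most $10$ edges, because $x_3$ and $x_4$ are $3$-vertices with at most $4+3$ further edges beyond them. Hence $l_{f'}(x_3x_4)\ge 3$.

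\textbf{Step 4: extend, as in Claim \ref{5cycle}.}
\begin{itemize}
\item If $L_{f'}(x_1y)\cap L_{f'}(x_3x_4)\neq\emptyset$, give both edges a common color; this is allowed by Step 1. Then $x_1x_2$ and $x_1x_5$ each still have at least $2$ available colors, so both can be colored.
\item Otherwise the two lists are disjoint, so $|L_{f'}(x_1y)\cup L_{f'}(x_3x_4)|\ge 5$. Together with list sizes $2,3,3,3$, this verifies Hall's condition (Theorem \ref{hall}) for the four edges, and a system of distinct representatives gives the extension.
\end{itemize}
Either way $f'$ extends to a strong $13$-edge-coloring of $H$, a contradiction.

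The main obstacle is the bookkeeping in Step 2. One must make sure that $l_f(x_1y)\ge 2$ really holds, using the degree restriction on $y$. One must also check that each of $x_1x_2$ and $x_1x_5$ genuinely sees $x_3x_4$, so that uncoloring it frees a color for each. If some count comes out one short, the fallback is to uncolor an additional edge near $y$ and repeat the same Hall argument.
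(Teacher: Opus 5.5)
Your proof is essentially the paper's. You delete $x_1$, show that $x_1y$ does not see $x_3x_4$ via Claims \ref{triangle} and \ref{4cycle}, erase the color of $x_3x_4$, and split on whether $L_{f'}(x_1y)\cap L_{f'}(x_3x_4)$ is empty, using Hall's theorem in the disjoint case.

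One count is wrong, though harmlessly. Since $x_3$ is only assumed to be a $3$-vertex, its third neighbour may be a $4$-vertex. This happens in the application in Claim \ref{3d2}, where $x_3$ plays the role of the $3(C)$-vertex $z_2$. So $x_3x_4$ can see $2+2+1+3+1+2=11$ colored edges, giving only $l_{f'}(x_3x_4)\ge 2$ and $|L_{f'}(x_1y)\cup L_{f'}(x_3x_4)|\ge 4$ in the disjoint case. That bound still verifies Hall's condition for the four edges, and it is exactly what the paper uses.
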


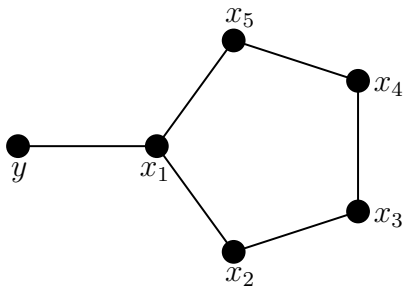
\begin{figure}[H]
    \centering
    \tikzset{every picture/.style={line width=0.75pt}} 

\begin{tikzpicture}[x=0.75pt,y=0.75pt,yscale=-1,xscale=1]

\draw   (208,97.1) -- (246.76,43.75) -- (309.49,64.13) -- (309.49,130.07) -- (246.76,150.45) -- cycle ;
\draw    (138,97.1) -- (208,97.1) ;
\draw  [fill={rgb, 255:red, 0; green, 0; blue, 0 }  ,fill opacity=1 ] (132.5,97.1) .. controls (132.5,94.06) and (134.96,91.6) .. (138,91.6) .. controls (141.04,91.6) and (143.5,94.06) .. (143.5,97.1) .. controls (143.5,100.14) and (141.04,102.6) .. (138,102.6) .. controls (134.96,102.6) and (132.5,100.14) .. (132.5,97.1) -- cycle ;
\draw  [fill={rgb, 255:red, 0; green, 0; blue, 0 }  ,fill opacity=1 ] (202.5,97.1) .. controls (202.5,94.06) and (204.96,91.6) .. (208,91.6) .. controls (211.04,91.6) and (213.5,94.06) .. (213.5,97.1) .. controls (213.5,100.14) and (211.04,102.6) .. (208,102.6) .. controls (204.96,102.6) and (202.5,100.14) .. (202.5,97.1) -- cycle ;
\draw  [fill={rgb, 255:red, 0; green, 0; blue, 0 }  ,fill opacity=1 ] (241.26,43.75) .. controls (241.26,40.71) and (243.73,38.25) .. (246.76,38.25) .. controls (249.8,38.25) and (252.26,40.71) .. (252.26,43.75) .. controls (252.26,46.78) and (249.8,49.25) .. (246.76,49.25) .. controls (243.73,49.25) and (241.26,46.78) .. (241.26,43.75) -- cycle ;
\draw  [fill={rgb, 255:red, 0; green, 0; blue, 0 }  ,fill opacity=1 ] (303.99,64.13) .. controls (303.99,61.09) and (306.45,58.63) .. (309.49,58.63) .. controls (312.52,58.63) and (314.99,61.09) .. (314.99,64.13) .. controls (314.99,67.16) and (312.52,69.63) .. (309.49,69.63) .. controls (306.45,69.63) and (303.99,67.16) .. (303.99,64.13) -- cycle ;
\draw  [fill={rgb, 255:red, 0; green, 0; blue, 0 }  ,fill opacity=1 ] (303.99,130.07) .. controls (303.99,127.04) and (306.45,124.57) .. (309.49,124.57) .. controls (312.52,124.57) and (314.99,127.04) .. (314.99,130.07) .. controls (314.99,133.11) and (312.52,135.57) .. (309.49,135.57) .. controls (306.45,135.57) and (303.99,133.11) .. (303.99,130.07) -- cycle ;
\draw  [fill={rgb, 255:red, 0; green, 0; blue, 0 }  ,fill opacity=1 ] (241.26,150.45) .. controls (241.26,147.42) and (243.73,144.95) .. (246.76,144.95) .. controls (249.8,144.95) and (252.26,147.42) .. (252.26,150.45) .. controls (252.26,153.49) and (249.8,155.95) .. (246.76,155.95) .. controls (243.73,155.95) and (241.26,153.49) .. (241.26,150.45) -- cycle ;

\draw (133,104) node [anchor=north west][inner sep=0.75pt]  [color={rgb, 255:red, 0; green, 0; blue, 0 }  ,opacity=1 ] [align=left] {$\displaystyle y$};
\draw (198,104) node [anchor=north west][inner sep=0.75pt]  [color={rgb, 255:red, 0; green, 0; blue, 0 }  ,opacity=1 ] [align=left] {$\displaystyle x_{1}$};
\draw (241,157) node [anchor=north west][inner sep=0.75pt]  [color={rgb, 255:red, 0; green, 0; blue, 0 }  ,opacity=1 ] [align=left] {$\displaystyle x_{2}$};
\draw (316,127) node [anchor=north west][inner sep=0.75pt]  [color={rgb, 255:red, 0; green, 0; blue, 0 }  ,opacity=1 ] [align=left] {$\displaystyle x_{3}$};
\draw (316,61) node [anchor=north west][inner sep=0.75pt]  [color={rgb, 255:red, 0; green, 0; blue, 0 }  ,opacity=1 ] [align=left] {$\displaystyle x_{4}$};
\draw (241,26) node [anchor=north west][inner sep=0.75pt]  [color={rgb, 255:red, 0; green, 0; blue, 0 }  ,opacity=1 ] [align=left] {$\displaystyle x_{5}$};

\end{tikzpicture}
\caption{A forbidden subgraph of $H$.}
\label{fig1}
\end{figure}

This proof is similar to the proof of Claim \ref{5cycle}.

\begin{proof}
    Suppose to the contrary that $H$ has such a subgraph.
    
    We know that $x_1 y$ does not see $x_3 x_4$, because:
    \begin{itemize}
        \item As shown in Figure \ref{fig1}, $x_1$, $y$, $x_3$, and $x_4$ have been assumed to be different vertices, so $x_1 y$ and $x_3 x_4$ are not incident.
        \item If $x_1 y$ and $x_3 x_4$ are not incident, but are joined by another edge, then $y$ is adjacent to $x_3$ or $x_4$, which means $x_1\sim x_2\sim x_3\sim y\sim x_1$ or $x_1\sim x_5\sim x_4\sim y\sim x_1$ is a $4$-cycle forbidden by Claim \ref{4cycle}, a contradiction.
    \end{itemize}

    By the minimality of $H$, we know that $H-x_1$ has a strong $13$-edge-coloring $f$. It is easy to check that $x_1 y$ sees at most $11$ edges (excluding $x_3 x_4$) in $H-x_1$, which means $l_f(x_1 y)\ge 2$; $x_1 x_2$ sees at most $11$ edges (including $x_3 x_4$) in $H-x_1$, which means $l_f(x_1 x_2)\ge 2$; and $x_1 x_5$ sees at most $11$ edges (including $x_3 x_4$) in $H-x_1$, which means $l_f(x_1 x_5)\ge 2$.

    In $H-x_1$, $x_3 x_4$ sees at most $11$ edges. Now we erase the color on $x_3 x_4$ under $f$ to get a new partial coloring $f'$. Under $f'$, $x_1 y$ sees at most $11$ edges with colors, which means $l_{f'}(x_1 y)\ge 2$; $x_1 x_2$ sees at most $10$ edges with colors, which means $l_{f'}(x_1 x_2)\ge 3$; $x_1 x_5$ sees at most $10$ edges with colors, which means $l_{f'}(x_1 x_5)\ge 3$; and $x_3 x_4$ sees at most $11$ edges with colors, which means $l_{f'}(x_3 x_4)\ge 2$.

    As $x_1 y$ does not see $x_3 x_4$, we may color them by the same color. The same as in Claim \ref{5cycle}, there are two cases.

    \textbf{Case 1.} $L_{f'}(x_1 y)\cap L_{f'}(x_3 x_4)\neq\emptyset$.

    In this case, first, we use one color in $L_{f'}(x_1 y)\cap L_{f'}(x_3 x_4)$ on both $x_1 y$ and $x_3 x_4$. Then, there are at least two colors available for $x_1 x_2$ and at least two colors available for $x_1 x_5$, so we can also find feasible colors for $x_1 x_2$ and $x_1 x_5$, and extend $f'$ to a strong $13$-edge-coloring of $H$, a contradiction.

    \textbf{Case 2.} $L_{f'}(x_1 y)\cap L_{f'}(x_3 x_4)=\emptyset$.

    In this case, we can use Hall's marriage theorem (Theorem \ref{hall}). Let $S_1=L_{f'}(x_1 y)$, $S_2=L_{f'}(x_1 x_2)$, $S_3=L_{f'}(x_1 x_5)$, and $S_4=L_{f'}(x_3 x_4)$. It is clear that, for any $k\in \{1,2,3\}$ and any $1\le i_1\neq i_2\neq\dots\neq i_k\le 4$, we have $|\bigcup_{j=1}^k S_{i_j}|\ge k$. Then, for $k=4$, we have $|S_1\cup S_2\cup S_3\cup S_4|\ge |S_1\cup S_4|=|L_{f'}(x_1 y)\cup L_{f'}(x_3 x_4)|\ge 4$, as $L_{f'}(x_1 y)$ and $L_{f'}(x_3 x_4)$ are disjoint. Thus, we can find a system of distinct representatives of $\{S_1,S_2,S_3,S_4\}$, which means we can find feasible colors for $x_1 y$, $x_1 x_2$, $x_1 x_5$, and $x_3 x_4$ to extend $f'$ to a strong $13$-edge-coloring of $H$, a contradiction.

    In either case, we get a contradiction, so $H$ cannot have a subgraph described in this theorem.
\end{proof}

Now, we further divide the $3(C)$-vertices in $H$ into three subclasses.

\begin{itemize}
    \item A $3(C)$-vertex is called a $3(C_{weak})$-vertex if it is adjacent to two $3(D)$-vertices;
    \item A $3(C)$-vertex is called a $3(C_{strong})$-vertex if it is adjacent to at least one $3(B)$-vertex;
    \item A $3(C)$-vertex is called a $3(C_{moderate})$-vertex if it is neither a $3(C_{weak})$-vertex nor a $3(C_{strong})$-vertex.
\end{itemize}

Since a $3(C_{moderate})$-vertex is not adjacent to $3(B)$-vertices, and it is adjacent to at most one $3(D)$-vertex, we know that a $3(C_{moderate})$-vertex must be adjacent to at least one $3(C)$-vertex.

\begin{claim}\label{3d1}
    A $3(D)$-vertex in $H$ is adjacent to at most one $3(C_{weak})$-vertex.
\end{claim}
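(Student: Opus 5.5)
Suppose to the contrary that a $3(D)$-vertex $x$ is adjacent to two $3(C_{weak})$-vertices $u$ and $w$, and let $z$ be the third neighbor of $x$; since $x$ is a $3(D)$-vertex, $z$ is a $3$-vertex. As a $3(C_{weak})$-vertex, $u$ is adjacent to one $4$-vertex $a$ and to two $3(D)$-vertices, one of which is $x$; call the other one $u'$. Similarly, $w$ has a $4$-neighbor $b$ and a second $3(D)$-neighbor $w'$. The plan is the same as in Claims \ref{5cycle} and \ref{pan}. We take a strong $13$-edge-coloring $f$ of $H-x$ and erase the colors on $uu'$ and $ww'$, giving a partial coloring $f'$. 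We then color the five edges $xz$, $xu$, $xw$, $uu'$, $ww'$ by combining a shared-color step with Hall's theorem (Theorem \ref{hall}).

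\textbf{Step 1: $uu'$ and $ww'$ do not see each other.} First, $u'\neq w$ and $w'\neq u$, since $u,w$ are $3(C)$-vertices while $u',w'$ are $3(D)$-vertices. If $u'=w'$, then $x\sim u\sim u'\sim w\sim x$ is a $4$-cycle with $x$ a $3(D)$-vertex and the other vertices $3$-vertices, which is forbidden by Claim \ref{4cycle}. If instead an edge joins the two edges, Claim \ref{triangle} rules out the adjacencies $u\sim w$, $u\sim w'$ and $w\sim u'$, each of which would create a triangle with $x$ or a $4$-cycle already excluded. The only remaining possibility is $u'\sim w'$. Then $x\sim u\sim u'\sim w'\sim w\sim x$ is a $5$-cycle in which $x,u',w'$ are $3(D)$-vertices and $u,w$ are $3$-vertices, in exactly the positions forbidden by Claim \ref{5cycle}. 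I expect this step to need the most care: one must check every way the two edges could be incident or joined by an edge, and also make sure the vertices in the counts below are genuinely distinct.

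\textbf{Step 2: counting colored edges seen under $f'$.}
\begin{itemize}
\item $xz$ sees at most $10$ colored edges: the $2$ other edges at $z$, at most $6$ edges at $z$'s other neighbors, $ua$ and $wb$. Hence $l_{f'}(xz)\ge 3$.
\item $xu$ sees at most $9$ colored edges: $ua$, $3$ edges at $a$, $2$ edges at $u'$, $wb$, and $2$ edges at $z$. Hence $l_{f'}(xu)\ge 4$, and symmetrically $l_{f'}(xw)\ge 4$.
\item $uu'$ sees at most $10$ colored edges: $ua$, $3$ edges at $a$, $2$ edges at $u'$, and at most $4$ edges at the two other neighbors of $u'$, which are $3$-vertices. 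The edges at $x$ are uncolored. Hence $l_{f'}(uu')\ge 3$, and symmetrically $l_{f'}(ww')\ge 3$.
\end{itemize}

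\textbf{Step 3: coloring the five edges.} There are two cases, according to whether $L_{f'}(uu')\cap L_{f'}(ww')$ is empty.

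If the intersection is nonempty, we put one common color on both $uu'$ and $ww'$, which is allowed by Step 1. This removes at most one color from each remaining list, so the lists of $xz$, $xu$, $xw$ have sizes at least $2,3,3$. By Lemma \ref{lemma}-type greedy ordering, these three edges can then be colored.

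If the intersection is empty, the two lists are disjoint, so $|L_{f'}(uu')\cup L_{f'}(ww')|\ge 6$. We check Hall's condition for the five lists:
\begin{itemize}
\item Any collection of at most $3$ lists has union of size at least $3$, since every list has size at least $3$.
\item Any collection of $4$ lists contains $xu$ or $xw$, so its union has size at least $4$.
\item All $5$ lists together contain $L_{f'}(uu')\cup L_{f'}(ww')$, so the union has size at least $6$.
\end{itemize}
Theorem \ref{hall} then gives feasible distinct colors for all five edges.

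In both cases $f'$ extends to a strong $13$-edge-coloring of $H$, a contradiction.
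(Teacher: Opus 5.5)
Your proof is correct and follows the paper's argument essentially step for step: erase the colors on $uu'$ and $ww'$, show via Claims~\ref{triangle}, \ref{4cycle} and \ref{5cycle} that these two edges do not see each other, obtain the same list bounds $3,4,4,3,3$, and split on whether $L_{f'}(uu')\cap L_{f'}(ww')$ is empty, using Hall's theorem when it is. One small citation slip: the adjacencies $u\sim w'$ and $w\sim u'$ are excluded by the $4$-cycle Claim~\ref{4cycle}, not by Claim~\ref{triangle}.
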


\begin{proof}
    Suppose to the contrary that $x$ is a $3(D)$-vertex, and it is adjacent to two $3(C_{weak})$-vertices $y_1$ and $y_2$. Let $z_1$ be the other $3(D)$-vertex adjacent to $y_1$, and let $z_2$ be the other $3(D)$-vertex adjacent to $y_2$. Let $xw$ be the third edge incident to $x$ (the other two are $x y_1$ and $x y_2$). As $x$ is a $3(D)$-vertex, we know that $w$ is a $3$-vertex.

    For convenience, denote $x y_1$ by $e_1$, $x y_2$ by $e_2$, $x w$ by $e_3$, $y_1 z_1$ by $e_4$, and $y_2 z_2$ by $e_5$. 
    
    We know that $e_4\neq e_5$. Because otherwise $z_1=y_2$ and $z_2=y_1$, which is impossible because $z_1$ and $z_2$ are $3(D)$-vertices and $y_1$ and $y_2$ are $3(C_{weak})$-vertices.

    Then, we check that $e_4$ and $e_5$ do not see each other.
    
    If $e_4$ and $e_5$ are incident, then there are three possible cases.

    \begin{enumerate}
        \item $z_1=y_2$. This is impossible because $z_1$ is a $3(D)$-vertex and $y_2$ is a $3(C_{weak})$-vertex.
        \item $z_2=y_1$. This is impossible because $z_2$ is a $3(D)$-vertex and $y_1$ is a $3(C_{weak})$-vertex.
        \item $z_1=z_2$. In this case, $x\sim y_1\sim z_1\sim y_2\sim x$ is a $4$-cycle forbidden by Claim \ref{4cycle}, a contradiction.
    \end{enumerate}

    If $e_4$ and $e_5$ are not incident, but are joined by another edge, then there are four possible cases.

    \begin{enumerate}
        \item $y_1$ and $y_2$ are joined by an edge. In this case, $x\sim y_1\sim y_2\sim x$ is a triangle forbidden by Claim \ref{triangle}, a contradiction.
        \item $y_1$ and $z_2$ are joined by an edge. In this case, $x\sim y_1\sim z_2\sim y_2\sim x$ is a $4$-cycle forbidden by Claim \ref{4cycle}, a contradiction.
        \item $z_1$ and $y_2$ are joined by an edge. In this case, $x\sim y_1\sim z_1\sim y_2\sim x$ is a $4$-cycle forbidden by Claim \ref{4cycle}, a contradiction.
        \item $z_1$ and $z_2$ are joined by an edge. In this case, $x\sim y_1\sim z_1\sim z_2\sim y_2\sim x$ is a $5$-cycle forbidden by Claim \ref{5cycle}, a contradiction.
    \end{enumerate}

    Thus, $e_4$ and $e_5$ do not see each other.

    By the minimality of $H$, we know that $H-x$ has a strong $13$-edge-coloring $f$. It is easy to check that $e_1$ sees at most $11$ edges (including $e_4$ and $e_5$) in $H-x$, which means $l_f(e_1)\ge 2$; $e_2$ sees at most $11$ edges (including $e_4$ and $e_5$) in $H-x$, which means $l_f(e_2)\ge 2$; and $e_3$ sees at most $12$ edges (including $e_4$ and $e_5$) in $H-x$, which means $l_f(e_3)\ge 1$.

    In $H-x$, $e_4$ sees at most $10$ edges, and $e_5$ sees at most $10$ edges. Now we erase the colors on $e_4$ and $e_5$ under $f$ to get a new partial coloring $f'$. Under $f'$, $e_1$ sees at most $9$ edges with colors, which means $l_{f'}(e_1)\ge 4$; $e_2$ sees at most $9$ edges with colors, which means $l_{f'}(e_2)\ge 4$; $e_3$ sees at most $10$ edges with colors, which means $l_{f'}(e_3)\ge 3$; $e_4$ sees at most $10$ edges with colors, which means $l_{f'}(e_4)\ge 3$; and $e_5$ sees at most $10$ edges with colors, which means $l_{f'}(e_5)\ge 3$.

    As $e_4$ does not see $e_5$, we may color them by the same color. There are two cases.

    \textbf{Case 1.} $L_{f'}(e_4)\cap L_{f'}(e_5)\neq\emptyset$.

    In this case, first, we use one color in $L_{f'}(e_4)\cap L_{f'}(e_5)$ on both $e_4$ and $e_5$. Then, there are at least three colors available for $e_1$, at least three colors available for $e_2$, and at least two colors available for $e_3$. So, we can find feasible colors for $e_1$, $e_2$, and $e_3$, and extend $f'$ to a strong $13$-edge-coloring of $H$, a contradiction.

    \textbf{Case 2.} $L_{f'}(e_4)\cap L_{f'}(e_5)=\emptyset$.

    In this case, we invoke Hall's marriage theorem (Theorem \ref{hall}). Let $S_1=L_{f'}(e_1)$, $S_2=L_{f'}(e_2)$, $S_3=L_{f'}(e_3)$, $S_4=L_{f'}(e_4)$, and $S_5=L_{f'}(e_5)$. It is clear that, for any $k\in [1,4]$ and any $1\le i_1\neq i_2\neq\dots\neq i_k\le 5$, we have $|\bigcup_{j=1}^k S_{i_j}|\ge k$. Then, for $k=5$, we have $|S_1\cup S_2\cup S_3\cup S_4\cup S_5|\ge |S_4\cup S_5|=|L_{f'}(e_4)\cup L_{f'}(e_5)|\ge 6>5$, as $L_{f'}(e_4)$ and $L_{f'}(e_5)$ are disjoint. Thus, we can find a system of distinct representatives of $\{S_1,S_2,S_3,S_4,S_5\}$, which means we can find feasible colors for $e_1$, $e_2$, $e_3$, $e_4$, and $e_5$ to extend $f'$ to a strong $13$-edge-coloring of $H$, a contradiction.

    In either case, we get a contradiction, so a $3(D)$-vertex in $H$ is adjacent to at most one $3(C_{weak})$-vertex.
\end{proof}

\begin{claim}\label{3d2}
    Let $x$ be a $3(D)$-vertex in $H$, which is adjacent to three $3(C)$-vertices $y_1$, $y_2$, and $y_3$. If one of $y_1$, $y_2$, and $y_3$ is a $3(C_{weak})$-vertex, then the other two are $3(C_{strong})$-vertices.
\end{claim}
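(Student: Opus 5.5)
We argue by contradiction. Let $x$ be the $3(D)$-vertex with $3(C)$-neighbors $y_1,y_2,y_3$, where $y_1$ is a $3(C_{weak})$-vertex, and suppose $y_2$ (say) is not a $3(C_{strong})$-vertex. By Claim \ref{3d1}, $y_2$ is not a $3(C_{weak})$-vertex either, so it is a $3(C_{moderate})$-vertex. Its neighbors are therefore $x$, a $4$-vertex $u_2$, and a $3$-vertex $w$. We claim $w$ is a $3(C)$-vertex. Indeed, $w$ is not $3(D)$ (else $y_2$ would be weak), not $3(B)$ (else $y_2$ would be strong), and not $3(A)$ (since $w$ is adjacent to the $3$-vertex $y_2$). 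For $y_1$, let $u_1$ be its $4$-neighbor and $z_1$ its other $3(D)$-neighbor. Set $e_1=xy_1$, $e_2=xy_2$, $e_3=xy_3$, $e_4=y_1z_1$, $e_5=y_2w$. We take a strong $13$-edge-coloring $f$ of $H-x$ and erase the colors on $e_4$ and $e_5$ to get $f'$, mimicking the proof of Claim \ref{3d1}.

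\textbf{Step 1: $e_4$ and $e_5$ do not see each other.} We check the cases as in Claim \ref{3d1}.
\begin{enumerate}
\item $z_1=w$ is impossible, since $z_1$ is $3(D)$ and $w$ is $3(C)$.
\item $z_1=y_2$ and $w=y_1$ are impossible by the vertex types.
\item $y_1y_2\in E$ gives a triangle, forbidden by Claim \ref{triangle}.
\item $y_1w\in E$ gives a $4$-cycle $x\sim y_1\sim w\sim y_2\sim x$ through the $3(D)$-vertex $x$ with all other vertices $3$-vertices, forbidden by Claim \ref{4cycle}.
\item $z_1y_2\in E$ is impossible, because the neighbors of $y_2$ are $x$, $u_2$ and $w$.
\item $z_1w\in E$ gives the configuration of Claim \ref{pan}, which is the main point. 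Take $x_1=x$, $x_2=y_2$, $x_3=w$, $x_4=z_1$, $x_5=y_1$, and pendant vertex $y=y_3$. Here $x_1,x_4$ are $3(D)$-vertices, $x_2,x_3,x_5$ are $3$-vertices, and $y_3$ is a $3(C)$-vertex, so Claim \ref{pan} forbids this.
\end{enumerate}
Distinctness of the vertices involved follows from the vertex types together with Claims \ref{triangle} and \ref{4cycle}.

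\textbf{Step 2: counting available colors.} In $H-x$, the edge $e_1$ sees at most $2$ edges at $y_1$, $2$ at $y_2$, $2$ at $y_3$, $2$ more at $z_1$ and $3$ more at $u_1$. That is $11$ edges including $e_4$ and $e_5$, so $l_{f'}(e_1)\ge 4$. Symmetrically, $l_{f'}(e_2)\ge 4$, using that $w$ and $u_2$ contribute $2+3$. The edge $e_3$ sees at most $11$ edges, since $y_3$ is $3(C)$, so $l_{f'}(e_3)\ge 2$. The edge $e_4$ sees $y_1u_1$, $3$ edges at $u_1$, $2$ edges at $z_1$, and $2+2$ edges at the other $3$-neighbors of $z_1$. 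That is at most $10$ colored edges, so $l_{f'}(e_4)\ge 3$. The edge $e_5$ sees $y_2u_2$, $3$ at $u_2$, $2$ at $w$, and $3+2$ at the neighbors of $w$. That is at most $11$, so $l_{f'}(e_5)\ge 2$.

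\textbf{Step 3: extending the coloring.} There are two cases, as in Claim \ref{3d1}.
\begin{itemize}
\item If $L_{f'}(e_4)\cap L_{f'}(e_5)\ne\emptyset$, give $e_4$ and $e_5$ a common color. This is allowed by Step 1. Then $e_3,e_1,e_2$ retain at least $1,3,3$ colors, and we color them greedily.
\item Otherwise the two lists are disjoint, so $|S_4\cup S_5|\ge 5$, where $S_i=L_{f'}(e_i)$. Any family of at most $3$ lists satisfies Hall's condition trivially. Any $4$ lists include $S_1$ or $S_2$, with the one exception $\{S_3,S_4,S_5\}$ plus one more. In every case the union has size at least $4$, since $|S_4\cup S_5|\ge5$. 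The full family has union of size at least $5$. Hall's theorem (Theorem \ref{hall}) then gives a system of distinct representatives.
\end{itemize}
Either way $f'$ extends to a strong $13$-edge-coloring of $H$, a contradiction.

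The main obstacle is Step 1, specifically excluding $z_1w\in E$, which is exactly what Claim \ref{pan} is designed for. The counts in Step 2 also need a careful check that $e_4$ indeed has only $10$ colored neighbors, since all other neighbors of the $3(D)$-vertex $z_1$ are $3$-vertices.
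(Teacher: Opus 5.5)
Your proof is correct and follows the paper's argument essentially step for step: the same edges $e_1,\dots,e_5$ (your $w$ is the paper's $z_2$), the same use of Claims \ref{triangle}, \ref{4cycle} and \ref{pan} to show that $e_4$ and $e_5$ do not see each other, the same erase-and-recolor step, and the same two-case finish with Hall's theorem. The differences are cosmetic. You rule out $z_1y_2\in E$ directly from the neighborhood of $y_2$ rather than via Claim \ref{4cycle}, and you use the weaker bound $l_{f'}(e_3)\ge 2$ where the paper gets $l_{f'}(e_3)\ge 4$ (because $e_3$ also sees the erased edges $e_4,e_5$); the weaker bound still suffices in both cases.
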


\begin{proof}
    By symmetry, without loss of generality, we may assume that $y_1$ is a $3(C_{weak})$-vertex. Suppose to the contrary that at most one of $y_2$ and $y_3$ is a $3(C_{strong})$-vertex. 
    
    If either one of $y_2$ and $y_3$ is a $3(C_{weak})$-vertex, then $x$ is adjacent to two $3(C_{weak})$-vertices, contradicting Claim \ref{3d1}. Thus, we can assume that $y_2$ is a $3(C_{moderate})$-vertex, and $y_3$ is a $3(C_{moderate})$-vertex or $3(C_{strong})$-vertex.

    We have that $y_1$ is a $3(C_{weak})$-vertex and it is already adjacent to a $3(D)$-vertex $x$. Let $z_1$ be the other $3(D)$-vertex adjacent to $y_1$. We have that $y_2$ is a $3(C_{moderate})$-vertex, so, as mentioned earlier, it is adjacent to a $3(C)$-vertex, and we let this $3(C)$-vertex be $z_2$.

    For convenience, denote $x y_1$ by $e_1$, $x y_2$ by $e_2$, $x y_3$ by $e_3$, $y_1 z_1$ by $e_4$, and $y_2 z_2$ by $e_5$. 
    
    It is clear that $e_4\neq e_5$. Because if $e_4=e_5$, then we have $z_1=y_2$ and $z_2=y_1$. However, $z_1$ is a $3(D)$-vertex, and $y_2$ is a $3(C_{moderate})$-vertex, a contradiction.

    Then, we check that $e_4$ and $e_5$ do not see each other.
    
    If $e_4$ and $e_5$ are incident, then there are three possible cases.

    \begin{enumerate}
        \item $z_1=y_2$. This is impossible because $z_1$ is a $3(D)$-vertex and $y_2$ is a $3(C_{moderate})$-vertex.
        \item $z_2=y_1$. In this case, $x\sim y_1\sim y_2\sim x$ is a triangle forbidden by Claim \ref{triangle}, a contradiction.
        \item $z_1=z_2$. This is impossible because $z_1$ is a $3(D)$-vertex and $z_2$ is a $3(C)$-vertex.
    \end{enumerate}

    If $e_4$ and $e_5$ are not incident, but are joined by another edge, then there are four possible cases.

    \begin{enumerate}
        \item $y_1$ and $y_2$ are joined by an edge. In this case, $x\sim y_1\sim y_2\sim x$ is a triangle forbidden by Claim \ref{triangle}, a contradiction.
        \item $y_1$ and $z_2$ are joined by an edge. In this case, $x\sim y_1\sim z_2\sim y_2\sim x$ is a $4$-cycle forbidden by Claim \ref{4cycle}, a contradiction.
        \item $z_1$ and $y_2$ are joined by an edge. In this case, $x\sim y_1\sim z_1\sim y_2\sim x$ is a $4$-cycle forbidden by Claim \ref{4cycle}, a contradiction.
        \item $z_1$ and $z_2$ are joined by an edge. In this case, $H$ has a subgraph isomorphic to the one forbidden by Claim \ref{pan} ($y_3,\,x,\,y_2,\,z_2,\,z_1,\,y_1$ in this proof correspond respectively to $y,\,x_1,\,x_2,\,x_3,\,x_4,\,x_5$ in Claim \ref{pan}). So we also get a contradiction in this case.
    \end{enumerate}

    Thus, $e_4$ and $e_5$ do not see each other.

    By the minimality of $H$, we know that $H-x$ has a strong $13$-edge-coloring $f$. It is easy to check that, for $i\in \{1,2,3\}$, $e_i$ sees at most $11$ edges (including $e_4$ and $e_5$) in $H-x$, which means $l_f(e_i)\ge 2$.

    In $H-x$, $e_4$ sees at most $10$ edges, and $e_5$ sees at most $11$ edges. Now we erase the colors on $e_4$ and $e_5$ under $f$ to get a new partial coloring $f'$. Under $f'$, $e_1$ sees at most $9$ edges with colors, which means $l_{f'}(e_1)\ge 4$; $e_2$ sees at most $9$ edges with colors, which means $l_{f'}(e_2)\ge 4$; $e_3$ sees at most $9$ edges with colors, which means $l_{f'}(e_3)\ge 4$; $e_4$ sees at most $10$ edges with colors, which means $l_{f'}(e_4)\ge 3$; and $e_5$ sees at most $11$ edges with colors, which means $l_{f'}(e_5)\ge 2$.

    As $e_4$ does not see $e_5$, we may color them by the same color. There are two cases.

    \textbf{Case 1.} $L_{f'}(e_4)\cap L_{f'}(e_5)\neq\emptyset$.

    In this case, first, we use one color in $L_{f'}(e_4)\cap L_{f'}(e_5)$ on both $e_4$ and $e_5$. Then, there are at least three colors available for $e_1$, at least three colors available for $e_2$, and at least three colors available for $e_3$. So, we can find feasible colors for $e_1$, $e_2$, and $e_3$, and extend $f'$ to a strong $13$-edge-coloring of $H$, a contradiction.

    \textbf{Case 2.} $L_{f'}(e_4)\cap L_{f'}(e_5)=\emptyset$.

    In this case, we invoke Hall's marriage theorem (Theorem \ref{hall}). Let $S_1=L_{f'}(e_1)$, $S_2=L_{f'}(e_2)$, $S_3=L_{f'}(e_3)$, $S_4=L_{f'}(e_4)$, and $S_5=L_{f'}(e_5)$. It is clear that, for any $k\in [1,4]$ and any $1\le i_1\neq i_2\neq\dots\neq i_k\le 5$, we have $|\bigcup_{j=1}^k S_{i_j}|\ge k$. Then, for $k=5$, we have $|S_1\cup S_2\cup S_3\cup S_4\cup S_5|\ge |S_4\cup S_5|=|L_{f'}(e_4)\cup L_{f'}(e_5)|\ge 5$, as $L_{f'}(e_4)$ and $L_{f'}(e_5)$ are disjoint. Thus, we can find a system of distinct representatives of $\{S_1,S_2,S_3,S_4,S_5\}$, which means we can find feasible colors for $e_1$, $e_2$, $e_3$, $e_4$, and $e_5$ to extend $f'$ to a strong $13$-edge-coloring of $H$, a contradiction.

    In either case, we get a contradiction, so this claim is proved.
\end{proof}

In the minimal counterexample $H$, for $d\in\{2,3,4\}$, let each $d$-vertex $v$ have initial charge $\omega(v)=d-\frac{34}{11}$. By the assumption $mad(H)<\frac{34}{11}$, we have $\sum_{v\in V(H)}\omega(v)<0$. 

If $v$ is a $d$-vertex and it is adjacent to $u$, then $v$ is called a \emph{$d$-neighbor} of $u$. Now we redistribute the charge using the following discharging rules.
\begin{enumerate}
    \item[R1.] Each $4$-vertex gives $\frac{6}{11}$ to its $2$-neighbor (if it has a $2$-neighbor); and gives $\frac{4}{33}$ to each of its $3$-neighbors.
    \item[R2.] Each $3(B)$-vertex gives $\frac{1}{22}$ to its $3$-neighbor.
    \item[R3.] By definition, a $3(C_{strong})$-vertex $x$ has two $3$-neighbors (say $y$ and $z$), and at least one of them (say $y$) is a $3(B)$-vertex. Let $x$ give $\frac{5}{132}$ to $z$.
    \item[R4.] By definition, a $3(C_{moderate})$-vertex $x$ has two $3$-neighbors (say $y$ and $z$), and at least one of them (say $y$) is a $3(C)$-vertex. Let $x$ give $\frac{1}{33}$ to $z$.
    \item[R5.] Each $3(C_{weak})$-vertex gives $\frac{1}{66}$ to each of its $3(D)$-neighbors.
\end{enumerate}

After redistributing the charge, each vertex $v$ gets a new charge $\omega'(v)$, and the total charge in $H$ does not change.

Using the claims and discharging rules we made, we can check that $\omega'(v)\ge 0$ for every $v\in V(H)$.

For the $4$-vertices in $H$, by Claim \ref{4v} and R1, we have:
\begin{itemize}
    \item If $v$ is a $4$-vertex which has one $2$-neighbor and three $3$-neighbors, then $\omega'(v)=4-\frac{34}{11}-\frac{6}{11}-3\cdot\frac{4}{33}=0$.
    \item If $v$ is a $4$-vertex which has four $3$-neighbors, then $\omega'(v)=4-\frac{34}{11}-4\cdot\frac{4}{33}>0$.
\end{itemize}

For the $3(A)$-vertices, $3(B)$-vertices, and $3(C)$-vertices in $H$, we have:
\begin{itemize}
    \item If $v$ is a $3(A)$-vertex, then $\omega'(v)=3-\frac{34}{11}+3\cdot\frac{4}{33}>0$, by R1.
    \item If $v$ is a $3(B)$-vertex, then $\omega'(v)\ge 3-\frac{34}{11}+2\cdot\frac{4}{33}-\frac{1}{22}>0$, by R1 and R2.
    \item If $v$ is a $3(C_{strong})$-vertex, then $\omega'(v)\ge 3-\frac{34}{11}+\frac{4}{33}+\frac{1}{22}-\frac{5}{132}>0$, by R1, R2, and R3.
    \item If $v$ is a $3(C_{moderate})$-vertex, then $\omega'(v)\ge 3-\frac{34}{11}+\frac{4}{33}-\frac{1}{33}=0$, by R1 and R4.
    \item If $v$ is a $3(C_{weak})$-vertex, then $\omega'(v)=3-\frac{34}{11}+\frac{4}{33}-2\cdot\frac{1}{66}=0$, by R1 and R5.
\end{itemize}

Then, we know that only $3(B)$-vertices, $3(C_{strong})$-vertices, $3(C_{moderate})$-vertices, $3(C_{weak})$-vertices, and $3(D)$-vertices can be the neighbors of a $3(D)$-vertex. Let us compare how much charge they can give to a $3(D)$-vertex.

Let $x$ be a $3(D)$-vertex.
\begin{itemize}
    \item By R2, a $3(B)$-neighbor of $x$ gives $\frac{1}{22}$ to $x$.
    \item By R3, a $3(C_{strong})$-neighbor of $x$ gives $\frac{5}{132}$ to $x$.
    \item By R4, a $3(C_{moderate})$-neighbor of $x$ gives $\frac{1}{33}$ to $x$.
    \item By R5, a $3(C_{weak})$-neighbor of $x$ gives $\frac{1}{66}$ to $x$.
    \item According to our discharging rules, a $3(D)$-neighbor of $x$ does not give any charge to $x$.
\end{itemize}

We have
\begin{align*}
    \frac{1}{22}>\frac{5}{132}>\frac{1}{33}>\frac{1}{66},
\end{align*}
so, based on the amount of charge that a $3(D)$-vertex can get, we have the following order.
\begin{align}\label{order}
    3(B)>3(C_{strong})>3(C_{moderate})>3(C_{weak}). \tag{$\star$}
\end{align}

Now, for the $3(D)$-vertices in $H$, we have:
\begin{itemize}
    \item If $v$ is a $3(D)$-vertex which is adjacent to another $3(D)$-vertex, then, by Claim \ref{3dv}, it is adjacent to two $3(B)$-vertices. So $\omega'(v)=3-\frac{34}{11}+2\cdot\frac{1}{22}=0$, by R2.
    \item Assume that $v$ is a $3(D)$-vertex which is not adjacent to another $3(D)$-vertex, but is adjacent to at least one $3(B)$-vertex. By Claim \ref{3d1}, $v$ is adjacent to at most one $3(C_{weak})$-vertex. So, in view of \eqref{order}, at least $v$ can get $\frac{1}{22}$ from a $3(B)$-vertex, $\frac{1}{66}$ from a $3(C_{weak})$-vertex, and $\frac{1}{33}$ from a $3(C_{moderate})$-vertex. Thus, $\omega'(v)\ge 3-\frac{34}{11}+\frac{1}{22}+\frac{1}{66}+\frac{1}{33}=0$.
    \item If $v$ is a $3(D)$-vertex which is not adjacent to a $3(D)$-vertex or a $3(B)$-vertex, then it is only adjacent to $3(C)$-vertices.
    \begin{itemize}
        \item If $v$ is adjacent to a $3(C_{weak})$-vertex, then by Claim \ref{3d2}, it is adjacent to two $3(C_{strong})$-vertices. So $\omega'(v)=3-\frac{34}{11}+\frac{1}{66}+2\cdot\frac{5}{132}=0$, by R3 and R5.
        \item If $v$ is not adjacent to a $3(C_{weak})$-vertex, then it is only adjacent to $3(C_{strong})$-vertices and $3(C_{moderate})$-vertices. So $\omega'(v)\ge 3-\frac{34}{11}+3\cdot\frac{1}{33}=0$, by \eqref{order} and R4.
    \end{itemize}
\end{itemize}

Finally, for the $2$-vertices in $H$, by Claim \ref{2v}, we know that a $2$-vertex $v$ is adjacent to two $4$-vertices. So $\omega'(v)=2-\frac{34}{11}+2\cdot\frac{6}{11}=0$.

Now, we have $\omega'(v)\ge 0$ for every vertex $v\in V(H)$. Since the total charge in $H$ does not change, we have
\begin{align*}
    0\le \sum_{v\in V(H)}\omega'(v)=\sum_{v\in V(H)}\omega(v)<0,
\end{align*}
a contradiction. Hence, a minimal counterexample $H$ does not exist, and Theorem \ref{thm1} is established.

\section{Proof of Theorem \ref{thm2}}
In this section, we study the graphs with Ore-degree $8$ and prove Theorem \ref{thm2}.

Suppose to the contrary that Theorem \ref{thm2} is not true, and $H$ is a minimal counterexample to Theorem \ref{thm2}, which means $\theta(H)\le 8$, $mad(H)<\frac{113}{31}$, and $\chi_s'(H)>20$, but for any $v\in V(H)$, we have $\chi_s'(H-v)\le 20$.

A strong $20$-edge-coloring $f:E(H-v)\longrightarrow \{1,2,\dots,20\}$ of $H-v$ is called a \emph{partial coloring} of $H$. For an edge $e\in E(H)\setminus E(H-v)$, if a color $c\in \{1,2,\dots,20\}$ has not been used on an edge in $H-v$ that $e$ sees, then on the basis of $f$, we can color $e$ by $c$. Let $L_f(e)\subseteq \{1,2,\dots,20\}$ be the set of colors that have not been used on an edge that $e$ sees, and let $l_f(e):=|L_f(e)|$. It is clear that if $e$ sees $k\le 19$ edges in $H-v$, then $l_f(e)\ge 20-k\ge 1$.

In this proof, the following lemma plays an analogous role to Lemma \ref{lemma} in the proof of Theorem \ref{thm1}.

\begin{lemma}\label{lemma2}
    Let $v$ be a vertex in $H$ with $k$ edges $e_1,e_2,\dots,e_k$ incident to it. Let $f:E(H-v)\longrightarrow \{1,2,\dots,20\}$ be a strong $20$-edge-coloring of $H-v$, which is a partial coloring of $H$. If, up to re-ordering $e_1,e_2,\dots,e_k$, we have 
    \begin{align*}
        l_f(e_i)\ge i
    \end{align*}
    for each $i\in [1,k]$, then $f$ can be extended to a strong $20$-edge-coloring of $H$.
\end{lemma}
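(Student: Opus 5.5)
The plan is to color the edges $e_1,\dots,e_k$ greedily in the order given by the re-indexing, that is, $e_1$ first, then $e_2$, and so on up to $e_k$. We then check that each step is possible and that the result is a strong $20$-edge-coloring of $H$.

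The first thing to note is that the edges of $H$ not in $H-v$ are exactly $e_1,\dots,e_k$. These all share the endpoint $v$, so they pairwise see each other (distance $1$) and must receive pairwise distinct colors. Every other pair of edges that see each other either lies entirely in $H-v$, or consists of one $e_i$ and one edge of $H-v$.

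One point deserves a remark. Two edges of $H-v$ may see each other in $H$ through a path using $v$, while being at distance greater than $2$ in $H-v$. This cannot happen: if $g,g'\in E(H-v)$ are joined in $H$ by an edge $vw$, then $vw$ would have to share endpoints with both $g$ and $g'$, and since $v\notin V(H-v)$ this forces $g$ and $g'$ both to contain $w$. Then they are incident already in $H-v$. So $f$ stays strong on $E(H-v)$ inside $H$, and the only new constraints are between the $e_i$ and the edges of $H-v$, which are recorded by $L_f(e_i)$, together with the mutual distinctness of the $e_i$.

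Now the induction. Suppose $e_1,\dots,e_{i-1}$ have received distinct colors $c_j\in L_f(e_j)$. The edge $e_i$ then has at least $l_f(e_i)-(i-1)\ge 1$ colors in $L_f(e_i)\setminus\{c_1,\dots,c_{i-1}\}$. Choose one of them as $c_i$. Because $c_i\in L_f(e_i)$, it conflicts with no edge of $H-v$ that $e_i$ sees, and it differs from the colors on $e_1,\dots,e_{i-1}$.

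After step $k$ every edge of $H$ is colored and every pair of edges that see each other gets distinct colors. This gives the extension. Equivalently, the hypothesis $l_f(e_i)\ge i$ verifies Hall's condition (Theorem \ref{hall}) for the lists $L_f(e_1),\dots,L_f(e_k)$, so a system of distinct representatives exists.

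The main care point is the remark above about $v$ not creating new distance-$2$ relations inside $H-v$. It is the same argument as for Lemma \ref{lemma}, with $13$ replaced by $20$; no real obstacle is expected.
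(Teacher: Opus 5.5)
Your proof is correct. It is the same argument the paper intends: it states this lemma without proof as the analogue of Lemma \ref{lemma}, which it calls obvious and whose formal proof it defers to \cite{Wa}, and greedily coloring $e_1,\dots,e_k$ in order with at least $l_f(e_i)-(i-1)\ge 1$ colors left at step $i$ is exactly that argument. Your extra check that deleting $v$ creates no hidden distance-$2$ relations among edges of $H-v$ is correct and fills in a detail the paper leaves implicit.
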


Using this lemma, we prove some conclusions about the structure of $H$. As $\theta(H)\le 8$, we have $d(v)\le 7$ for any $v\in V(H)$.

\begin{claim}
    There are no $1$-vertices, $2$-vertices, $6$-vertices, or $7$-vertices in $H$.
\end{claim}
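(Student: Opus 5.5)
The plan is to handle each forbidden degree separately. In every case we delete a suitable vertex $v$ and use the minimality of $H$ to get a strong $20$-edge-coloring $f$ of $H-v$. We then count, for each edge $e$ incident to $v$, how many edges of $H-v$ the edge $e$ sees, which gives a lower bound on $l_f(e)$. Finally we apply Lemma \ref{lemma2} to extend $f$ to all of $H$, a contradiction. The only structural input is $\theta(H)\le 8$: if $u$ is a $d$-vertex, then every neighbor of $u$ has degree at most $8-d$.

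\textbf{1-vertices.} Let $v$ be a $1$-vertex with neighbor $u$, and put $d=d(u)\le 7$. In $H-v$, the edge $uv$ sees only two kinds of edges:
\begin{itemize}
\item the $d-1$ other edges at $u$;
\item for each of the $d-1$ other neighbors $w$ of $u$, at most $8-d-1$ further edges at $w$.
\end{itemize}
So $uv$ sees at most $(d-1)+(d-1)(7-d)=(d-1)(8-d)\le 12$ edges. Hence $l_f(uv)\ge 8\ge 1$, and Lemma \ref{lemma2} applies.

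\textbf{2-vertices.} Let $v$ be a $2$-vertex with neighbors $u_1,u_2$, and write $d_i=d(u_i)\le 6$. In $H-v$, the edge $vu_1$ sees:
\begin{itemize}
\item the edges at $u_1$ and at the other neighbors of $u_1$, at most $(d_1-1)(8-d_1)\le 12$ edges by the same count as above;
\item the $d_2-1\le 5$ edges at $u_2$ other than $vu_2$; these are at distance $2$ from $vu_1$ via the edge $vu_2$, which lies in $H$ even though it is removed in $H-v$.
\end{itemize}
So $vu_1$ sees at most $17$ colored edges, and $l_f(vu_1)\ge 3$. Symmetrically $l_f(vu_2)\ge 3$, so Lemma \ref{lemma2} with $k=2$ extends $f$.

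\textbf{6- and 7-vertices.} If $v$ is a $6$-vertex, then every neighbor of $v$ has degree at most $2$. Take an edge $vu$. The other five edges at $v$ are not in $H-v$. Beyond those, $vu$ can only see:
\begin{itemize}
\item at most one other edge at $u$;
\item at most one edge at each of the five other neighbors of $v$.
\end{itemize}
That is at most $6$ colored edges, so $l_f(vu)\ge 14\ge 6$ for all six edges, and Lemma \ref{lemma2} applies. If $v$ is a $7$-vertex, all its neighbors are $1$-vertices, so its edges see no edge of $H-v$. Then $l_f(e)=20\ge 7$ for every edge at $v$, and again Lemma \ref{lemma2} applies.

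There is no real obstacle here. The only point needing care is the $2$-vertex case: one must remember that edges at $u_2$ are seen by $vu_1$ through the deleted edge $vu_2$. Even including them, the count of $17$ colored edges leaves ample room below the $20$ available colors.
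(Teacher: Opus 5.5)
Your treatment of $1$- and $2$-vertices matches the paper's argument exactly. This includes the point that $vu_1$ sees the $d_2-1$ edges at $u_2$ through the deleted edge $vu_2$, and the bounds $12$ and $17$.

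Your $6$-vertex count is wrong as written, though the conclusion survives. If $u$ has a second neighbor $w$, the edge $vu$ sees $uw$ and also every other edge at $w$, since these are at distance $2$ via $uw$. Because $d(u)\le 2$ only forces $d(w)\le 6$, there can be up to $5$ such edges. You omitted them. The correct bound is $1+5+5=11$ colored edges, not $6$. That gives $l_f(vu)\ge 9\ge 6$, so Lemma \ref{lemma2} still applies, but the stated count should be fixed. Your $7$-vertex count is fine.

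Neither case actually needs a coloring argument, which is how the paper handles them. The condition $\theta(H)\le 8$ forces every neighbor of a $6$-vertex to have degree at most $2$, and every neighbor of a $7$-vertex to have degree $1$. You have already excluded $1$- and $2$-vertices, so $6$- and $7$-vertices cannot exist. The paper disposes of each in one line this way.
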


This claim tells us that only $3$-vertices, $4$-vertices, and $5$-vertices can be in $H$.

\begin{proof}
    Suppose that $x\in V(H)$ is a $1$-vertex, and its only neighbor is $y$. Let $d:=d(y)\in [1,7]$. By the minimality of $H$, we know that $H-x$ has a strong $20$-edge-coloring $f$. But now, $xy$ sees at most $(d-1)(8-d)\le 12$ edges in $H-x$, which means $l_f(xy)\ge 8$. So, by Lemma \ref{lemma2}, we can extend $f$ to a strong $20$-edge-coloring of $H$, a contradiction. Thus, there are no $1$-vertices in $H$. As $7$-vertices can only be adjacent to $1$-vertices, we also know that there are no $7$-vertices in $H$.

    Suppose that $u\in V(H)$ is a $2$-vertex, and it is adjacent to $v_1$ and $v_2$. Let $d_1:=d(v_1)\in [2,6]$ and $d_2:=d(v_2)\in [2,6]$. By the minimality of $H$, we know that $H-u$ has a strong $20$-edge-coloring $f$. But now, $uv_1$ sees at most $(d_1-1)(8-d_1)+d_2-1\le 17$ edges in $H-u$, which means $l_f(uv_1)\ge 3$; and $uv_2$ sees at most $(d_2-1)(8-d_2)+d_1-1\le 17$ edges in $H-u$, which means $l_f(uv_2)\ge 3$. So, by Lemma \ref{lemma2}, we can extend $f$ to a strong $20$-edge-coloring of $H$, a contradiction. Thus, there are no $2$-vertices in $H$. As $6$-vertices can only be adjacent to $1$-vertices and $2$-vertices, we also know that there are no $6$-vertices in $H$.
\end{proof}

\begin{claim}\label{83v}
    If a $3$-vertex in $H$ is adjacent to another $3$-vertex, then it is adjacent to two $5$-vertices.
\end{claim}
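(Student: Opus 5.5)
The plan is to argue by contradiction using the minimality of $H$ and Lemma \ref{lemma2}, in the same way as the preceding claim. Let $x$ be a $3$-vertex adjacent to a $3$-vertex $y$, and let $z$ and $w$ be its other two neighbors. Suppose, to the contrary, that not both $z$ and $w$ are $5$-vertices. By the previous claim every vertex has degree in $\{3,4,5\}$, so without loss of generality $d(z)\in\{3,4\}$ and $d(w)\in\{3,4,5\}$. By minimality, $H-x$ has a strong $20$-edge-coloring $f$. I will bound $l_f(xy)$, $l_f(xz)$ and $l_f(xw)$ from below and then apply Lemma \ref{lemma2}.

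\textbf{The counting bound.} Fix an edge $xu$ with $u\in\{y,z,w\}$. In $H-x$, the edge $xu$ sees three kinds of edges:
\begin{itemize}
\item[(a)] the $d(u)-1$ other edges at $u$;
\item[(b)] the edges at the neighbors of $u$ other than $x$, excluding the edge back to $u$;
\item[(c)] the edges at the other two neighbors of $x$, excluding their edges to $x$.
\end{itemize}
Since $\theta(H)\le 8$, each neighbor of $u$ has degree at most $8-d(u)$. So (a) and (b) together contribute at most $(d(u)-1)+(d(u)-1)(7-d(u))=(d(u)-1)(8-d(u))$. This equals $10$ if $d(u)=3$ and $12$ if $d(u)\in\{4,5\}$. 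Part (c) contributes $\sum_{v\in N(x)\setminus\{u\}}(d(v)-1)$.

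\textbf{Applying the bound.}
\begin{itemize}
\item For $xy$: at most $10+(d(z)-1)+(d(w)-1)\le 10+3+4=17$ edges, so $l_f(xy)\ge 3$.
\item For $xz$: if $d(z)=4$, at most $12+2+(d(w)-1)\le 18$, so $l_f(xz)\ge 2$. If $d(z)=3$, at most $10+2+4=16$, so $l_f(xz)\ge 4$.
\item For $xw$: at most $12+2+(d(z)-1)\le 17$, so $l_f(xw)\ge 3$. This holds in every case for $d(w)$, since $d(w)=3$ only lowers the count.
\end{itemize}

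\textbf{Conclusion.} Order the edges as $xz$, $xy$, $xw$. Then $l_f\ge 2$, $\ge 3$, $\ge 3$ respectively, which certainly satisfies $l_f(e_i)\ge i$. Lemma \ref{lemma2} then extends $f$ to a strong $20$-edge-coloring of $H$, contradicting the choice of $H$.

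The main obstacle is only the bookkeeping in the count. One must not forget the distance-$2$ edges reached through $x$ itself, namely part (c). One must also check that no case pushes the count to $19$ or more. The worst case is $d(z)=4$, $d(w)=5$, where $xz$ sees $18$ edges. This still leaves two colors, which is enough because $xz$ is placed first in the ordering.
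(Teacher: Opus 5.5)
Your proposal is correct and follows essentially the same argument as the paper. Both delete $x$, bound the edges seen by $xy$, $xz$, $xw$ by $17$, $18$, $17$ using $(d(u)-1)(8-d(u))$ plus the edges reached through $x$, and apply Lemma~\ref{lemma2} with $xz$ ordered first. Your explicit check that every seen edge falls into one of types (a)--(c) makes the count slightly more transparent than the paper's ``it is easy to check.''
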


This claim also implies that a $3$-vertex in $H$ is adjacent to at most one $3$-vertex.

\begin{proof}
    Let $x$ be a $3$-vertex in $H$ which is adjacent to a $3$-vertex $y$ and two other vertices $z$ and $w$. Suppose that at most one of $z$ and $w$ is a $5$-vertex.

    By symmetry, without loss of generality, we may assume that $z$ is not a $5$-vertex, which means $d_1:=d(z)\in [3,4]$; and $w$ can be a $5$-vertex, which means $d_2:=d(w)\in [3,5]$.

    By the minimality of $H$, we know that $H-x$ has a strong $20$-edge-coloring $f$. But now, $xy$ sees at most $10+(d_1-1)+(d_2-1)\le 17$ edges in $H-x$, which means $l_f(xy)\ge 3$; $xz$ sees at most $(d_1-1)(8-d_1)+2+(d_2-1)\le 18$ edges in $H-x$, which means $l_f(xz)\ge 2$; and $xw$ sees at most $(d_2-1)(8-d_2)+2+(d_1-1)\le 17$ edges in $H-x$, which means $l_f(xz)\ge 3$. So, by Lemma \ref{lemma2}, we can extend $f$ to a strong $20$-edge-coloring of $H$, a contradiction. Thus, both $z$ and $w$ must be $5$-vertices.
\end{proof}

We divide the $3$-vertices in $H$ into four classes.
\begin{itemize}
    \item A $3$-vertex is called a $3(A)$-vertex if it is adjacent to three $5$-vertices;
    \item A $3$-vertex is called a $3(B)$-vertex if it is adjacent to exactly two $5$-vertices;
    \item A $3$-vertex is called a $3(C)$-vertex if it is adjacent to exactly one $5$-vertex;
    \item A $3$-vertex is called a $3(D)$-vertex if it is not adjacent to a $5$-vertex.
\end{itemize}

A $3(B)$-vertex is either adjacent to a $3$-vertex or adjacent to a $4$-vertex. We further divide the $3(B)$-vertices into two subclasses.
\begin{itemize}
    \item A $3(B)$-vertex is called a $3(B_{strong})$-vertex if it is adjacent to two $5$-vertices and one $4$-vertex;
    \item A $3(B)$-vertex is called a $3(B_{weak})$-vertex if it is adjacent to two $5$-vertices and one $3$-vertex.
\end{itemize}

A $3(C)$-vertex must be adjacent to one $5$-vertex and two $4$-vertices, because if it is adjacent to a $3$-vertex, then by Claim \ref{83v}, it is adjacent to two $5$-vertices, contradicting the definition of a $3(C)$-vertex.

Similarly, a $3(D)$-vertex must be adjacent to three $4$-vertices, because if it is adjacent to a $3$-vertex, then by Claim \ref{83v}, it is adjacent to two $5$-vertices, contradicting the definition of a $3(D)$-vertex. In Claim \ref{83dv}, we will prove a further result about the $3(D)$-vertices, for which we need to divide the $4$-vertices into different classes. 

Before dividing the $4$-vertices into classes, we prove the following claim.

\begin{claim}\label{84v}
    A $4$-vertex in $H$ cannot be adjacent to four $3$-vertices.
\end{claim}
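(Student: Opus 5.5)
The plan is a direct application of Lemma \ref{lemma2} at the $4$-vertex itself, with no recoloring needed. Suppose to the contrary that $x$ is a $4$-vertex in $H$ whose four neighbors $y_1,y_2,y_3,y_4$ are all $3$-vertices. By the minimality of $H$, the graph $H-x$ has a strong $20$-edge-coloring $f$, which is a partial coloring of $H$. We then bound $l_f(xy_i)$ for each $i\in[1,4]$ and extend $f$.

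Fix $i$ and count the edges of $H-x$ that $xy_i$ can see. They fall into three groups; edges lying in more than one group only lower the count.
\begin{itemize}
    \item The two edges at $y_i$ other than $xy_i$ contribute at most $2$.
    \item For each $j\neq i$, the two edges at $y_j$ other than $xy_j$ are joined to $xy_i$ through $xy_j$. This gives at most $3\cdot 2=6$ edges.
    \item Each neighbor $z\neq x$ of $y_i$ satisfies $d(z)\le 8-3=5$, because $\theta(H)\le 8$. So $z$ carries at most $4$ edges other than $y_iz$, and over the two such neighbors this gives at most $8$.
\end{itemize}
The other three edges at $x$ are not in $H-x$, so they do not count. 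Hence $xy_i$ sees at most $2+6+8=16$ edges in $H-x$, and $l_f(xy_i)\ge 20-16=4$ for every $i$.

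Since each of the four edges $xy_1,\dots,xy_4$ has at least $4$ available colors, any ordering satisfies $l_f(xy_i)\ge 4\ge i$ for $i\in[1,4]$. By Lemma \ref{lemma2}, $f$ extends to a strong $20$-edge-coloring of $H$, a contradiction.

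I do not expect a real obstacle here. The only point needing care is the count, in particular using $\theta(H)\le 8$ to bound the degrees of the second neighbors of $x$ through $y_i$ by $5$.
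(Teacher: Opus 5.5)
Your proof is correct and is the paper's argument: delete $x$, show each $xy_i$ sees at most $16$ edges of $H-x$, conclude $l_f(xy_i)\ge 4$ for all $i$, and apply Lemma~\ref{lemma2}. Your count $2+6+8$, which uses $\theta(H)\le 8$ to bound the degrees of the other neighbors of $y_i$ by $5$, simply spells out the bound that the paper leaves as ``easy to check.''
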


\begin{proof}
    Suppose to the contrary that $x$ is a $4$-vertex in $H$, and it is adjacent to four $3$-vertices $y_1$, $y_2$, $y_3$, and $y_4$.

    By the minimality of $H$, we know that $H-x$ has a strong $20$-edge-coloring $f$. But now, it is easy to check that each one of $xy_1$, $xy_2$, $xy_3$, and $xy_4$ sees at most $16$ edges in $H-x$, which means $l_f(xy_i)\ge 4$ for each $i\in [1,4]$. So, by Lemma \ref{lemma2}, we can extend $f$ to a strong $20$-edge-coloring of $H$, a contradiction. Thus, a $4$-vertex in $H$ cannot be adjacent to four $3$-vertices.
\end{proof}

Hence, we can divide the $4$-vertices into four classes.
\begin{itemize}
    \item A $4$-vertex is called a $4(A)$-vertex if it is adjacent to four $4$-vertices;
    \item A $4$-vertex is called a $4(B)$-vertex if it is adjacent to three $4$-vertices and one $3$-vertex;
    \item A $4$-vertex is called a $4(C)$-vertex if it is adjacent to two $4$-vertices and two $3$-vertices;
    \item A $4$-vertex is called a $4(D)$-vertex if it is adjacent to one $4$-vertex and three $3$-vertices.
\end{itemize}

\begin{claim}\label{83dv}
    A $3(D)$-vertex in $H$ is adjacent to three $4(B)$-vertices.
\end{claim}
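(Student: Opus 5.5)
Let $x$ be a $3(D)$-vertex. As shown just before Claim \ref{84v}, $x$ is adjacent to three $4$-vertices $y_1$, $y_2$, $y_3$. None of them is a $4(A)$-vertex, because each has the $3$-neighbor $x$. So it suffices to rule out that some $y_i$ is a $4(C)$-vertex or a $4(D)$-vertex. Suppose, by symmetry, that $y_1$ is such a vertex. Then, besides $x$, the vertex $y_1$ has at least one more $3$-neighbor, and its other neighbors are $4$-vertices.

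I would argue exactly as in Claim \ref{84v}, by a single application of Lemma \ref{lemma2}. By minimality, $H-x$ has a strong $20$-edge-coloring $f$, and I need to bound how many edges of $H-x$ each edge $xy_i$ sees.

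\textbf{Counting for $xy_i$.} The edges of $H-x$ seen by $xy_i$ fall into three groups:
\begin{itemize}
\item the $3$ other edges at $y_i$;
\item for each neighbor $w\neq x$ of $y_i$, the $d(w)-1$ edges at $w$ other than $wy_i$, which is $3$ if $w$ is a $4$-vertex and $2$ if $w$ is a $3$-vertex;
\item the $3$ other edges at each of the two remaining neighbors $y_j$ of $x$, giving $6$.
\end{itemize}
Coincidences among these edges only lower the count, so the bounds below are upper bounds.

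\textbf{The resulting list sizes.} For an arbitrary $y_i$ the count is at most $3+3\cdot 3+6=18$, so $l_f(xy_i)\ge 2$. For $y_1$, which has at least one $3$-neighbor other than $x$, the middle group contributes at most $3+3+2=8$ (at most $7$ if $y_1$ is a $4(D)$-vertex). Hence $xy_1$ sees at most $17$ edges and $l_f(xy_1)\ge 3$.

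\textbf{Conclusion.} Order the edges as $xy_2, xy_3, xy_1$. They satisfy $l_f(xy_2)\ge 1$, $l_f(xy_3)\ge 2$ and $l_f(xy_1)\ge 3$. By Lemma \ref{lemma2}, $f$ extends to a strong $20$-edge-coloring of $H$, a contradiction. Therefore every neighbor of $x$ is a $4$-vertex with exactly one $3$-neighbor, that is, a $4(B)$-vertex.

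The only step needing care is the edge count: one must check that no edge seen by $xy_1$ is missed, in particular the edges at the other two neighbors $y_2$, $y_3$ of $x$. Beyond that the argument is routine, and I expect no Hall-type recoloring to be needed here.
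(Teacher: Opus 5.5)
Your proof is correct and takes essentially the same route as the paper. You delete $x$, bound the edges seen by $xy_1$ by $17$ and those seen by $xy_2$ and $xy_3$ by $18$, and apply Lemma \ref{lemma2} with the ordering $xy_2, xy_3, xy_1$. One phrase is loose: for a $4(D)$-vertex, the other neighbors of $y_1$ are not all $4$-vertices. Since your count bounds every neighbor of $y_1$ by $3$ edges, nothing depends on that phrase.
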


\begin{proof}
    We have mentioned that a $3(D)$-vertex must be adjacent to three $4$-vertices. Also, it is clear that a $3(D)$-vertex can only be adjacent to $4(B)$-vertices, $4(C)$-vertices, and $4(D)$-vertices. 
    
    Let $x$ be a $3(D)$-vertex which is adjacent to three $4$-vertices $y_1$, $y_2$, and $y_3$. Suppose to the contrary that $x$ is not adjacent to three $4(B)$-vertices. By symmetry, without loss of generality, we may assume that $y_1$ is not a $4(B)$-vertex. So $y_1$ is adjacent to at least two $3$-vertices, one of which is $x$.

    By the minimality of $H$, we know that $H-x$ has a strong $20$-edge-coloring $f$. But now, it is easy to check that $xy_1$ sees at most $17$ edges in $H-x$, which means $l_f(xy_1)\ge 3$; $xy_2$ sees at most $18$ edges in $H-x$, which means $l_f(xy_2)\ge 2$; and $xy_3$ sees at most $18$ edges in $H-x$, which means $l_f(xy_3)\ge 2$. So, by Lemma \ref{lemma2}, we can extend $f$ to a strong $20$-edge-coloring of $H$, a contradiction. Thus, a $3(D)$-vertex in $H$ must be adjacent to three $4(B)$-vertices.
\end{proof}

Now we analyze the neighbors of the $4$-vertices in $H$.

\begin{claim}\label{84dv}
    A $4(D)$-vertex is adjacent to three $3(B_{strong})$-vertices and one $4(A)$-vertex or $4(B)$-vertex.
\end{claim}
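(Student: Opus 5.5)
The plan is to set up the standard reduction: assume the conclusion fails and find an ordering of the four edges at the $4(D)$-vertex that meets the hypothesis of Lemma \ref{lemma2}. Let $x$ be a $4(D)$-vertex, with $4$-neighbor $w$ and $3$-neighbors $y_1,y_2,y_3$. By minimality, $H-x$ has a strong $20$-edge-coloring $f$, and we bound how many colored edges each of $xy_1,xy_2,xy_3,xw$ sees in $H-x$. Since $\theta(H)\le 8$, every neighbor of a $3$-vertex has degree at most $5$, and every neighbor of a $4$-vertex has degree at most $4$.

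\textbf{General bounds.} For $xy_i$, the edges seen in $H-x$ are:
\begin{itemize}
\item the $2$ other edges at $y_i$;
\item at most $(d(u)-1)+(d(u')-1)\le 8$ edges at the two other neighbors $u,u'$ of $y_i$;
\item $2$ edges at each $y_j$ with $j\neq i$, giving $4$;
\item $3$ edges at $w$.
\end{itemize}
So $xy_i$ sees at most $17$ colored edges, and $l_f(xy_i)\ge 3$. For $xw$, the edges seen are:
\begin{itemize}
\item $3$ edges at $w$;
\item at most $3$ edges at each of the three other neighbors of $w$, since these have degree at most $4$, giving $9$;
\item $2$ edges at each $y_i$, giving $6$.
\end{itemize}
So $xw$ sees at most $18$, and $l_f(xw)\ge 2$. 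These lists have sizes $\ge 3,3,3,2$, which is not enough for Lemma \ref{lemma2}. The point of the proof is that whenever the conclusion fails, one of the four edges gains one more available color, reaching $\ge 4$.

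\textbf{Case 1: some $y_i$ is not a $3(B_{strong})$-vertex.} Since $y_i$ is already adjacent to the $4$-vertex $x$, being $3(B_{strong})$ means exactly that its two other neighbors are both $5$-vertices. If this fails, one of them has degree at most $4$, so the term $(d(u)-1)+(d(u')-1)$ is at most $7$. Then $xy_i$ sees at most $16$ colored edges and $l_f(xy_i)\ge 4$. Order the edges as $xw$ (at least $2$ colors), the other two $xy_j$ (at least $3$ each), and finally $xy_i$ (at least $4$). Lemma \ref{lemma2} then extends $f$ to $H$, a contradiction.

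\textbf{Case 2: $w$ is not a $4(A)$- or $4(B)$-vertex.} Then $w$ is a $4(C)$- or $4(D)$-vertex, so at least two of its three neighbors other than $x$ are $3$-vertices. Each such $3$-neighbor contributes at most $2$ edges rather than $3$, so $xw$ sees at most $3+(3+2+2)+6=16$ colored edges and $l_f(xw)\ge 4$. Order the edges as $xy_1,xy_2,xy_3$ (at least $3$ colors each) followed by $xw$. Lemma \ref{lemma2} again gives a contradiction.

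I do not expect a real obstacle here; the argument is a direct count in the same style as Claim \ref{83dv}. The only step that needs care is checking that the counts are genuine upper bounds, meaning that coincidences between edges (for example, shared neighbors of $y_i$ and $w$) only reduce the number of seen edges. That holds because each bound simply counts edges at vertices within the relevant distance.
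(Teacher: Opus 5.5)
Your proof is correct and is essentially the paper's argument. You use the same baseline counts (at most $17$ seen edges for each $xy_i$ and $18$ for $xw$), the same drop to $16$ in each failure case, and the same application of Lemma~\ref{lemma2}; the only difference is that you rule out every non-$3(B_{strong})$ neighbor with one uniform count, whereas the paper excludes $3(D)$-neighbors via Claim~\ref{83dv} and counts only in the $3(C)$ case.
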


\begin{proof}
    Let $x$ be a $4(D)$-vertex, which is adjacent to three $3$-vertices $y_1$, $y_2$, and $y_3$, and a $4$-vertex $z$.
    
    First, we prove that $y_1$, $y_2$, and $y_3$ are all $3(B_{strong})$-vertices. 
    
    As $y_1$, $y_2$, and $y_3$ are already adjacent to a $4$-vertex $x$, by the classification of the $3$-vertices, we know that none of $y_1$, $y_2$, and $y_3$ can be a $3(A)$-vertex. Also, by Claim \ref{83dv}, we know that none of $y_1$, $y_2$, and $y_3$ can be a $3(D)$-vertex. Now we prove that none of $y_1$, $y_2$, and $y_3$ can be a $3(C)$-vertex.
    
    Suppose to the contrary that at least one of $y_1$, $y_2$, and $y_3$ is a $3(C)$-vertex. By symmetry, we may assume that $y_1$ is a $3(C)$-vertex. Then, by the minimality of $H$, we know that $H-x$ has a strong $20$-edge-coloring $f$. But now, it is easy to check that $xy_1$ sees at most $16$ edges in $H-x$, which means $l_f(xy_1)\ge 4$; $xy_2$ sees at most $17$ edges in $H-x$, which means $l_f(xy_2)\ge 3$; $xy_3$ sees at most $17$ edges in $H-x$, which means $l_f(xy_3)\ge 3$; and $xz$ sees at most $18$ edges in $H-x$, which means $l_f(xz)\ge 2$. So, by Lemma \ref{lemma2}, we can extend $f$ to a strong $20$-edge-coloring of $H$, a contradiction. Thus, none of $y_1$, $y_2$, and $y_3$ can be a $3(C)$-vertex, and this means all three of them are $3(B)$-vertices. 
    
    Recall that a $3(B_{strong})$-vertex is adjacent to two $5$-vertices and one $4$-vertex; and a $3(B_{weak})$-vertex is adjacent to two $5$-vertices and one $3$-vertex. As $y_1$, $y_2$, and $y_3$ are adjacent to a $4$-vertex $x$, we know that they must be $3(B_{strong})$-vertices.

    Then, we prove that $z$ is a $4(A)$-vertex or a $4(B)$-vertex.

    Suppose to the contrary that $z$ is a $4(C)$-vertex or a $4(D)$-vertex. Then, by the minimality of $H$, we know that $H-x$ has a strong $20$-edge-coloring $f$. But now, it is easy to check that $xy_1$ sees at most $17$ edges in $H-x$, which means $l_f(xy_1)\ge 3$; $xy_2$ sees at most $17$ edges in $H-x$, which means $l_f(xy_2)\ge 3$; $xy_3$ sees at most $17$ edges in $H-x$, which means $l_f(xy_3)\ge 3$; and $xz$ sees at most $16$ edges in $H-x$, which means $l_f(xz)\ge 4$. So, by Lemma \ref{lemma2}, we can extend $f$ to a strong $20$-edge-coloring of $H$, a contradiction. Thus, $z$ must be a $4(A)$-vertex or a $4(B)$-vertex.
\end{proof}

By Claim \ref{83dv}, $3(D)$-vertices are only adjacent to $4(B)$-vertices, so $4(C)$-vertices cannot be adjacent to $3(D)$-vertices. Then, we can further divide the $4(C)$-vertices in $H$ into two subclasses.

\begin{itemize}
    \item A $4(C)$-vertex is called a $4(C_{strong})$-vertex if it is adjacent to at most one $3(C)$-vertex;
    \item A $4(C)$-vertex is called a $4(C_{weak})$-vertex if it is adjacent to two $3(C)$-vertices.
\end{itemize}

\begin{claim}\label{4cstrong}
    A $4(C_{strong})$-vertex is either adjacent to one $3(C)$-vertex and one $3(B_{strong})$-vertex; or adjacent to two $3(B_{strong})$-vertices.
\end{claim}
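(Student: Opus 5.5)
The claim should follow from the classification of $3$-vertices together with Claim \ref{83dv}; no new coloring argument looks necessary. Let $x$ be a $4(C_{strong})$-vertex. By definition of a $4(C)$-vertex, $x$ has exactly two $4$-neighbors and exactly two $3$-neighbors, say $y_1$ and $y_2$. I will determine the possible types of $y_1$ and $y_2$, using only the fact that each $y_i$ is a $3$-vertex with the $4$-neighbor $x$.

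First I will rule out the types that cannot have a $4$-neighbor. A $3(A)$-vertex has three $5$-neighbors, so it has no $4$-neighbor, and hence $y_i$ is not a $3(A)$-vertex. A $3(B_{weak})$-vertex has neighbors of degrees $5,5,3$, so again $y_i$ cannot be of that type. Next, by Claim \ref{83dv}, every neighbor of a $3(D)$-vertex is a $4(B)$-vertex. Since $x$ is a $4(C)$-vertex, not a $4(B)$-vertex, neither $y_1$ nor $y_2$ is a $3(D)$-vertex.

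The classification of $3$-vertices into $3(A)$, $3(B)$, $3(C)$, $3(D)$ is exhaustive, and $3(B)$ splits into $3(B_{strong})$ and $3(B_{weak})$. Hence each of $y_1,y_2$ is either a $3(B_{strong})$-vertex or a $3(C)$-vertex. Since $x$ is $4(C_{strong})$, at most one of $y_1,y_2$ is a $3(C)$-vertex. So either exactly one is $3(C)$ and the other is $3(B_{strong})$, or both are $3(B_{strong})$, which is the statement.

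I expect no real obstacle here. The only point needing care is exhaustiveness: a $3(B)$-vertex has exactly two $5$-neighbors, so its third neighbor is a $3$- or $4$-vertex, and the $3(B_{strong})$/$3(B_{weak})$ split therefore covers every $3(B)$-vertex. With that checked, the argument is just the case elimination above.
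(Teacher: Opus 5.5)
Your proof is correct and follows essentially the same route as the paper. You rule out $3(A)$ and $3(B_{weak})$ neighbors because they have no $4$-neighbor, rule out $3(D)$ neighbors via Claim \ref{83dv}, and use the definition of $4(C_{strong})$ to allow at most one $3(C)$-neighbor.
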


\begin{proof}
    Let $x$ be a $4(C_{strong})$-vertex. We know that $x$ is not adjacent to $3(A)$-vertices or $3(D)$-vertices, and is adjacent to at most one $3(C)$-vertex.

    If $x$ is adjacent to a $3(C)$-vertex, then the other $3$-vertex adjacent to $x$ must be a $3(B)$-vertex. Let this $3(B)$-vertex be $y$. As $y$ is adjacent to a $4$-vertex $x$, by the subclassification of $3(B)$-vertices, we know that $y$ is a $3(B_{strong})$-vertex.

    If $x$ is not adjacent to a $3(C)$-vertex, then both of the $3$-vertices adjacent to $x$ must be $3(B)$-vertices. As these two $3(B)$-vertices are both adjacent to a $4$-vertex $x$, by the subclassification of $3(B)$-vertices, we know that they are $3(B_{strong})$-vertices.
\end{proof}

In Claim \ref{84cv}, we will analyze the neighbors of the $4(C_{weak})$-vertices. To establish Claim \ref{84cv}, we need the conclusions of Claims \ref{4ctriangle} and \ref{34c4c4c}.

\begin{claim}\label{4ctriangle}
    A triangle, whose three vertices are all $4(C)$-vertices, cannot be a subgraph of $H$.
\end{claim}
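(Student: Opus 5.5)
The plan is to apply Lemma \ref{lemma2} once, after deleting one vertex of the triangle. Suppose to the contrary that $x_1\sim x_2\sim x_3\sim x_1$ is a triangle in $H$ whose three vertices are all $4(C)$-vertices. A $4(C)$-vertex has exactly two $4$-neighbors. Since $x_2$ and $x_3$ are already two $4$-neighbors of $x_1$, the other two neighbors $y_1,y_2$ of $x_1$ are $3$-vertices. Likewise, $x_2$ has two $3$-neighbors $z_1,z_2$ and $x_3$ has two $3$-neighbors. By the minimality of $H$, the graph $H-x_1$ has a strong $20$-edge-coloring $f$. We estimate $l_f$ for the four edges $x_1y_1$, $x_1y_2$, $x_1x_2$, $x_1x_3$.

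\textbf{Counting for $x_1x_2$.} An edge $x_1u$ sees exactly the edges of $H-x_1$ having an endpoint in $(N(x_1)\cup N(u))\setminus\{x_1\}$. For $x_1x_2$ this vertex set is $\{x_2,x_3,y_1,y_2,z_1,z_2\}$, whose degrees in $H-x_1$ are $3,3,2,2,3,3$. This sum of $16$ counts the edge $x_2x_3$ twice, so $x_1x_2$ sees at most $15$ edges and $l_f(x_1x_2)\ge 5$. By symmetry, $l_f(x_1x_3)\ge 5$.

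\textbf{Counting for $x_1y_1$.} Let $w_1,w_2$ be the other neighbors of $y_1$; each has degree at most $5$. The relevant vertex set is $\{y_1,w_1,w_2,x_2,x_3,y_2\}$, with degrees in $H-x_1$ at most $2,5,5,3,3,2$, summing to $20$. In this sum, the edges $y_1w_1$ and $y_1w_2$ are each counted twice, and so is $x_2x_3$. Hence $x_1y_1$ sees at most $17$ edges and $l_f(x_1y_1)\ge 3$. The same bound holds for $x_1y_2$. Coincidences among these vertices only lower the counts, so the bounds remain valid in general.

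\textbf{Conclusion.} Order the edges as $x_1y_1,\,x_1y_2,\,x_1x_2,\,x_1x_3$. Their list sizes are at least $3,3,5,5$, which are at least $1,2,3,4$ respectively. By Lemma \ref{lemma2}, $f$ extends to a strong $20$-edge-coloring of $H$, a contradiction. The only real care needed is in the counting: verify that each double-counted edge (notably $x_2x_3$, which lies inside the neighborhood) is subtracted exactly once. There is plenty of slack, so no Hall-type argument should be needed.
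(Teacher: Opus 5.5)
Your proof is correct and is the paper's argument: delete $x_1$, bound the lists of $x_1y_1$, $x_1y_2$, $x_1x_2$, $x_1x_3$, and apply Lemma~\ref{lemma2}, with the same bound of $17$ for $x_1y_1$ and $x_1y_2$. Your count for $x_1x_2$ is slightly loose, since the edges $x_2z_1$ and $x_2z_2$ also have both endpoints in your vertex set; the true bound is $13$, giving $l_f(x_1x_2)\ge 7$ as in the paper, but overcounting is harmless here because $5\ge 4$ already suffices.
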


\begin{proof}
    Suppose to the contrary that $H$ has a triangle subgraph whose three vertices $x_1$, $x_2$, and $x_3$ are $4(C)$-vertices. Let $x_1 y_1$ and $x_1 y_2$ be the other two edges incident to $x_1$, besides $x_1 x_2$ and $x_1 x_3$. As $x_1$ is a $4(C)$-vertex, we know that both $y_1$ and $y_2$ are $3$-vertices.

    By the minimality of $H$, we know that $H-x_1$ has a strong $20$-edge-coloring $f$. But now, it is easy to check that $x_1 x_2$ sees at most $13$ edges in $H-x_1$, which means $l_f(x_1 x_2)\ge 7$; $x_1 x_3$ sees at most $13$ edges in $H-x_1$, which means $l_f(x_1 x_3)\ge 7$; $x_1 y_1$ sees at most $17$ edges in $H-x_1$, which means $l_f(x_1 y_1)\ge 3$; and $x_1 y_2$ sees at most $17$ edges in $H-x_1$, which means $l_f(x_1 y_2)\ge 3$. So, by Lemma \ref{lemma2}, we can extend $f$ to a strong $20$-edge-coloring of $H$, a contradiction. Thus, a triangle, whose three vertices are all $4(C)$-vertices, cannot be a subgraph of $H$.
\end{proof}

\begin{claim}\label{3triangle}
    A triangle, which has at least one $3$-vertex, cannot be a subgraph of $H$.
\end{claim}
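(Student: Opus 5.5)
The plan is to argue exactly as in Claims \ref{84v} and \ref{4ctriangle}: delete a suitable $3$-vertex of the triangle, take a strong $20$-edge-coloring of the rest, bound how many colored edges each of its three edges sees, and finish with Lemma \ref{lemma2}. Suppose $H$ contains a triangle $x_1\sim x_2\sim x_3\sim x_1$ with $d(x_1)=3$. Let $x_1y$ be the third edge at $x_1$. Recall that only $3$-, $4$- and $5$-vertices occur, and that $d(u)+d(v)\le 8$ for every edge $uv$.

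\textbf{Step 1: reduce to two degree patterns.} If $x_2$ or $x_3$ is a $3$-vertex, say $x_2$, then Claim \ref{83v} applied to $x_1$ forces $x_3$ and $y$ to be $5$-vertices. Applied to $x_2$, it likewise forces the third neighbor $z$ of $x_2$ to be a $5$-vertex. This gives the pattern $(3,3,5)$. Otherwise $d(x_2),d(x_3)\in\{4,5\}$ with $d(x_2)+d(x_3)\le 8$, so both are $4$-vertices, giving the pattern $(3,4,4)$.

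\textbf{Step 2: pattern $(3,4,4)$.} Take a strong $20$-edge-coloring $f$ of $H-x_1$. I will count the edges seen by each of the three edges at $x_1$, being careful not to double count the triangle edge $x_2x_3$.
\begin{itemize}
\item $x_1y$ sees the other $d(y)-1$ edges at $y$ together with at most $(d(y)-1)(8-d(y)-1)$ edges at neighbors of $y$. Together these number at most $(d(y)-1)(8-d(y))\le 12$ for $d(y)\le 5$. It also sees the $3+3-1=5$ remaining edges at $x_2$ and $x_3$. So it sees at most $17$ edges and $l_f(x_1y)\ge 3$.
\item $x_1x_2$ sees the $3$ other edges at $x_2$, the $2$ remaining edges at $x_3$, at most $d(y)-1\le 4$ edges at $y$, and at most $3$ new edges at each of the two further neighbors of $x_2$. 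That is at most $15$ edges, so $l_f(x_1x_2)\ge 5$.
\item By symmetry, $l_f(x_1x_3)\ge 5$.
\end{itemize}

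\textbf{Step 3: pattern $(3,3,5)$.} Again delete $x_1$.
\begin{itemize}
\item $x_1y$: the edges at $y$ and its other neighbors contribute at most $4\cdot 3=12$. The two edges $x_2x_3,x_2z$ and the three remaining edges at $x_3$ add $5$ more. So it sees at most $17$ edges and $l_f(x_1y)\ge 3$.
\item $x_1x_2$: it sees $2$ edges at $x_2$, $3$ more at $x_3$, $4$ more at $z$ and $4$ at $y$. That is at most $13$ edges, so $l_f(x_1x_2)\ge 7$.
\item $x_1x_3$: it sees $4$ edges at $x_3$ and $1$ new edge at $x_2$. The three other neighbors of $x_3$ have degree $3$ (since $x_3$ has degree $5$), contributing at most $2$ new edges each. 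Adding $4$ edges at $y$ gives at most $15$ edges, so $l_f(x_1x_3)\ge 5$.
\end{itemize}

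\textbf{Step 4: conclude.} In both patterns the list sizes are at least $3,5,5$ (respectively $3,5,7$). Ordering the edges appropriately gives $l_f(e_i)\ge i$, so Lemma \ref{lemma2} extends $f$ to a strong $20$-edge-coloring of $H$, a contradiction.

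The only delicate point is the bookkeeping in Steps 2 and 3, namely avoiding double counting of edges shared between $x_2$ and $x_3$. The bounds have ample slack, except for $x_1y$, which sits at $17$ and needs the $(d-1)(8-d)\le 12$ estimate used earlier in the paper.
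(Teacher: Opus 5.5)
Your proof is correct and follows the route the paper indicates when it omits this proof as ``very similar to Claim \ref{4ctriangle}'': delete the $3$-vertex of the triangle, bound the number of colored edges each of its three edges sees in $H-x_1$, and apply Lemma \ref{lemma2}. Using Claim \ref{83v} to reduce to the degree patterns $(3,4,4)$ and $(3,3,5)$ is a sound way to organize these omitted details, and your counts ($17,15,15$ and $17,13,15$ seen edges respectively) check out.
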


\begin{proof}
    This proof is very similar to the proof of Claim \ref{4ctriangle}, so we omit it here.
\end{proof}

\begin{claim}\label{34c4c4c}
    A $4$-cycle $x_1\sim x_2\sim x_3\sim x_4\sim x_1$, where $x_1$ is a $3$-vertex, and $x_2$, $x_3$, and $x_4$ are $4(C)$-vertices, cannot be a subgraph of $H$.
\end{claim}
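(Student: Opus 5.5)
The plan is to argue exactly as in Claims \ref{84v}--\ref{4ctriangle}. We delete the $3$-vertex $x_1$, take a strong $20$-edge-coloring $f$ of $H-x_1$ given by minimality, and show that the three edges at $x_1$ satisfy the hypothesis of Lemma \ref{lemma2}. So suppose to the contrary that the $4$-cycle $x_1\sim x_2\sim x_3\sim x_4\sim x_1$ exists, and let $x_1y$ be the third edge at $x_1$.

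First I fix the local structure. By Claim \ref{3triangle}, $x_1$ is not adjacent to $x_3$, so $y\notin\{x_2,x_3,x_4\}$. Since $x_2$ is a $4(C)$-vertex, it has exactly two $4$-neighbors and two $3$-neighbors. One $3$-neighbor is $x_1$ and one $4$-neighbor is $x_3$, so its remaining neighbors are a $3$-vertex $a$ and a $4$-vertex $b$. The same holds for $x_4$. The vertex $y$ has degree at most $5$, and if $d(y)=5$ then all neighbors of $y$ are $3$-vertices, because $\theta(H)\le 8$.

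Next I count colored edges seen in $H-x_1$; coincidences among vertices only lower these counts.
\begin{itemize}
\item[$\bullet$] The edge $x_1x_2$ sees at most $17$ colored edges: the $3$ other edges at $x_2$, the $3$ other edges at $x_3$, the $2$ other edges at $a$, the $3$ other edges at $b$, the $2$ edges at $x_4$ other than $x_4x_1$ and $x_4x_3$ (the latter already counted), and at most $4$ edges at $y$. Hence $l_f(x_1x_2)\ge 3$, and symmetrically $l_f(x_1x_4)\ge 3$.
\item[$\bullet$] The edge $x_1y$ sees at most $(d(y)-1)(8-d(y))\le 12$ edges around $y$, plus the $3$ other edges at $x_2$ and the $3$ other edges at $x_4$. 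This gives at most $18$ colored edges, so $l_f(x_1y)\ge 2$.
\end{itemize}

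Ordering the edges as $x_1y,\,x_1x_2,\,x_1x_4$, the list sizes are at least $2,3,3$, which exceed $1,2,3$. Lemma \ref{lemma2} then extends $f$ to a strong $20$-edge-coloring of $H$, a contradiction.

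The main obstacle is the count for $x_1x_2$. One must use the $4$-cycle to see that $x_3x_4$ is seen via both $x_2$ and $x_4$ and so is counted only once, which saves one edge. One must also use the $4(C)$ structure of $x_2$, whose fourth neighbor $a$ is a $3$-vertex, to save another. Without these two savings the count would reach $19$ and only give $l_f\ge 1$. If the bound for $x_1y$ turned out weaker than claimed, the fallback would be to uncolor $x_2x_3$ or $x_3x_4$ and use the Hall's-theorem argument from Claim \ref{5cycle}.
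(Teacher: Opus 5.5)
Your proposal is correct and follows the paper's proof essentially verbatim: delete the $3$-vertex $x_1$, use Claim~\ref{3triangle} to get $y\neq x_3$, and obtain $l_f(x_1y)\ge 2$, $l_f(x_1x_2)\ge 3$, $l_f(x_1x_4)\ge 3$ before applying Lemma~\ref{lemma2}. Your explicit counts justify the bounds of $17$ and $18$ seen edges that the paper leaves to the reader, and you correctly identify both savings: $x_3x_4$ is reached through both $x_2$ and $x_4$, and the remaining $3$-neighbor $a$ of the $4(C)$-vertex $x_2$ contributes only $2$ edges. The Hall's-theorem fallback is never needed.
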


\begin{proof}
    Suppose to the contrary that there is such a $4$-cycle in $H$. Let $x_1 y$ be the third edge incident to $x_1$ (the other two are $x_1 x_2$ and $x_1 x_4$). Note that $y\neq x_3$, because otherwise we have a triangle with a $3$-vertex, which is forbidden by Claim \ref{3triangle}. By the minimality of $H$, we know that $H-x_1$ has a strong $20$-edge-coloring $f$. But now, it is easy to check that $x_1 y$ sees at most $18$ edges in $H-x_1$, which means $l_f(x_1 y)\ge 2$; $x_1 x_2$ sees at most $17$ edges in $H-x_1$, which means $l_f(x_1 x_2)\ge 3$; and $x_1 x_4$ sees at most $17$ edges in $H-x_1$, which means $l_f(x_1 x_4)\ge 3$. Thus, by Lemma \ref{lemma2}, we can extend $f$ to a strong $20$-edge-coloring of $H$, a contradiction.
\end{proof}

By Claim \ref{84dv}, we know that a $4(C)$-vertex cannot be adjacent to $4(D)$-vertices. In the following claim, we further prove that a $4(C_{weak})$-vertex is adjacent to at least one $4(A)$-vertex or $4(B)$-vertex --- in other words, a $4(C_{weak})$-vertex cannot be adjacent to two $4(C)$-vertices.

\begin{claim}\label{84cv}
    A $4(C_{weak})$-vertex is adjacent to at least one $4(A)$-vertex or $4(B)$-vertex.
\end{claim}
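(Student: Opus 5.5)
We argue by contradiction. Let $x$ be a $4(C_{weak})$-vertex whose neighbors are two $3(C)$-vertices $y_1,y_2$ and two $4$-vertices $z_1,z_2$, and suppose neither $z_1$ nor $z_2$ is a $4(A)$- or $4(B)$-vertex. By Claim \ref{84dv}, a $4(C)$-vertex has no $4(D)$-neighbor, so $z_1$ and $z_2$ are both $4(C)$-vertices.

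We first record the local structure. Each $y_i$ has one $5$-neighbor and two $4$-neighbors, one of which is $x$. Each $z_j$ has two $3$-neighbors and exactly one $4$-neighbor besides $x$. Several coincidences are excluded by earlier claims:
\begin{itemize}
\item $z_1z_2\notin E(H)$, since otherwise $x z_1 z_2$ is a triangle of $4(C)$-vertices, contradicting Claim \ref{4ctriangle}.
\item No triangle contains one of the $y_i$, by Claim \ref{3triangle}.
\item Any $4$-cycle through a $3$-vertex and three $4(C)$-vertices is excluded by Claim \ref{34c4c4c}. For example, $y_1$ and $z_1$ cannot have a common $4(C)$-neighbor other than $x$.
\end{itemize}

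Next we take a strong $20$-edge-coloring $f$ of $H-x$ and count as in the previous claims. The edge $xy_i$ sees at most $2+4+3$ edges on the $y_i$ side and at most $2+3+3$ edges at the other neighbors of $x$, so $l_f(xy_i)\ge 3$. The edge $xz_j$ sees at most $3+(3+2+2)$ edges on the $z_j$ side and $2+2+3$ further edges, so $l_f(xz_j)\ge 3$. All four edges at $x$ therefore have list size at least $3$. This just fails Lemma \ref{lemma2}, which needs one list of size $4$.

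To gain the missing color, we erase the color of an edge $e=z_1u$, where $u$ is a $3$-neighbor of $z_1$. By the exclusions above, $u\notin\{y_1,y_2\}$. Each of $xy_1,xy_2,xz_1,xz_2$ sees $e$ (directly, or through $xz_1$). So under the new partial coloring $f'$, all four lists have size at least $4$. We then count the colored edges seen by $e$ when the four edges at $x$ are uncolored, and expect $l_{f'}(e)\ge 3$. We now need a system of distinct representatives for five lists, except that $e$ may share a color with any edge at $x$ that it does not see; in fact $e$ sees all four, so it is a genuine Hall problem. By Theorem \ref{hall}, every subfamily of at most four lists is fine, and the only failure is that all five lists lie in a common $4$-set.

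The main obstacle is ruling out this degenerate case. My first attempt is a second erasure. We redo the argument with $e'=z_2u'$, where $u'$ is a $3$-neighbor of $z_2$. Alternatively, we erase both $e$ and $e'$ simultaneously and check, using $z_1z_2\notin E(H)$ and Claim \ref{34c4c4c}, that $e$ and $e'$ do not see each other. Then, exactly as in Claims \ref{3d1} and \ref{3d2}, we split into cases:
\begin{itemize}
\item If $L(e)\cap L(e')\ne\emptyset$, we give $e$ and $e'$ a common color, and the four edges at $x$ keep lists of size at least $3$ with an extra color to spare.
\item If $L(e)\cap L(e')=\emptyset$, then $|L(e)\cup L(e')|\ge 6$ and Hall's condition holds for all six edges.
\end{itemize}
In either case we extend the coloring to $H$, a contradiction.

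The delicate part is verifying that $e$ and $e'$ do not see each other. This requires checking the adjacencies among $u,u'$, the $4$-neighbors of $z_1,z_2$, and the $y_i$ against Claims \ref{3triangle}, \ref{4ctriangle} and \ref{34c4c4c}, and we expect it to take most of the case analysis.
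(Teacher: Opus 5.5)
Your final argument matches the paper's proof: erase the colors of $e=z_1u$ and $e'=z_2u'$ together, show $e$ and $e'$ do not see each other, then either give them a common color or apply Theorem~\ref{hall} using $|L_{f'}(e)\cup L_{f'}(e')|\ge 6$. Your $u,u'$ are the paper's $w_1,w_2$, and the single-erasure detour can be dropped. The gap is the step you defer, and the claims you name for it are not enough to close it.

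Whether $e$ and $e'$ see each other depends only on the endpoints $z_1,u,z_2,u'$. The other $4$-neighbors of the $z_j$ and the $y_i$ play no role, so the check is short:
\begin{itemize}
\item $z_1=u'$ and $z_2=u$ are impossible by degree.
\item $u=u'$, or an edge $z_1u'$ or $z_2u$, gives a $4$-cycle through a $3$-vertex and the $4(C)$-vertices $z_1,x,z_2$. This is excluded by Claim~\ref{34c4c4c}.
\item An edge $z_1z_2$ gives a triangle of $4(C)$-vertices, excluded by Claim~\ref{4ctriangle}.
\item An edge $uu'$ only produces the $5$-cycle $u\sim z_1\sim x\sim z_2\sim u'\sim u$. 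None of Claims~\ref{3triangle}, \ref{4ctriangle}, \ref{34c4c4c} rules this out. You need Claim~\ref{83v}: the $3$-vertex $u$ would have the $3$-neighbor $u'$ and hence two $5$-neighbors, yet it is also adjacent to the $4$-vertex $z_1$.
\end{itemize}

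You should also state the counts that both cases rely on. Each of $xy_1,xy_2,xz_1,xz_2$ sees at most $17$ edges of $H-x$, and both $e$ and $e'$ are among them. So after the double erasure each of these lists has size at least $5$, not just $4$. This is what makes Hall's condition hold for the five lists formed by all four edges at $x$ plus one of $e,e'$. It is also what leaves lists of size at least $4$ in the common-color case.

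Likewise $l_{f'}(e)\ge 3$, and similarly for $e'$. Under $f'$, $e$ sees at most:
\begin{itemize}
\item $2$ colored edges at $z_1$;
\item $3$ at the other $4$-neighbor of $z_1$;
\item $2$ at the other $3$-neighbor of $z_1$;
\item $2$ at $u$;
\item $8$ at the two other neighbors of $u$, which have degree at most $5$.
\end{itemize}
With these additions your proof is complete.
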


\begin{proof}
    Let $x$ be a $4(C_{weak})$-vertex. By the definition of a $4(C_{weak})$-vertex, we know that $x$ is adjacent to two $3(C)$-vertices, say $y_1$ and $y_2$. Now, suppose to the contrary that $x$ is adjacent to two $4(C)$-vertices, say $z_1$ and $z_2$. Let $w_1$ be a $3$-vertex adjacent to $z_1$, and let $w_2$ be a $3$-vertex adjacent to $z_2$.

    For convenience, denote $xy_1$ by $e_1$; $xy_2$ by $e_2$; $xz_1$ by $e_3$; $xz_2$ by $e_4$; $z_1 w_1$ by $e_5$; and $z_2 w_2$ by $e_6$.

    We know that $e_5$ and $e_6$ are not the same edge. Because otherwise we have $z_1=w_2$ and $z_2=w_1$, which is impossible because $z_1$ and $z_2$ are $4(C)$-vertices and $w_1$ and $w_2$ are $3$-vertices.
    
    Then, we check that $e_5$ and $e_6$ do not see each other.

    If $e_5$ and $e_6$ are incident, then there are three possible cases.
    \begin{enumerate}
        \item $z_1=w_2$. This is not possible because $z_1$ is a $4(C)$-vertex and $w_2$ is a $3$-vertex.
        \item $z_2=w_1$. This is not possible because $z_2$ is a $4(C)$-vertex and $w_1$ is a $3$-vertex.
        \item $w_1=w_2$. In this case, $w_1\sim z_1\sim x\sim z_2\sim w_1$ is a $4$-cycle forbidden by Claim \ref{34c4c4c}, a contradiction.
    \end{enumerate}

    If $e_5$ and $e_6$ are not incident, but are joined by another edge, then there are four possible cases.
    \begin{enumerate}
        \item $z_1$ and $z_2$ are joined by an edge. In this case, $x\sim z_1\sim z_2\sim x$ is a triangle of $4(C)$-vertices, which is forbidden by Claim \ref{4ctriangle}, a contradiction.
        \item $z_1$ and $w_2$ are joined by an edge. In this case, $w_2\sim z_1\sim x\sim z_2\sim w_2$ is a $4$-cycle forbidden by Claim \ref{34c4c4c}, a contradiction.
        \item $z_2$ and $w_1$ are joined by an edge. In this case, $w_1\sim z_2\sim x\sim z_1\sim w_1$ is a $4$-cycle forbidden by Claim \ref{34c4c4c}, a contradiction.
        \item $w_1$ and $w_2$ are joined by an edge. In this case, two $3$-vertices $w_1$ and $w_2$ are adjacent to each other, so by Claim \ref{83v}, all their neighbors should be $5$-vertices. However, $w_1$ is adjacent to a $4(C)$-vertex $z_1$, and $w_2$ is adjacent to a $4(C)$-vertex $z_2$, a contradiction.
    \end{enumerate}

    Thus, $e_5$ and $e_6$ do not see each other.

    By the minimality of $H$, we know that $H-x$ has a strong $20$-edge-coloring $f$. It is easy to check that, for each $i\in [1,4]$, $e_i$ sees at most $17$ edges (including $e_5$ and $e_6$) in $H-x$, which means $l_f(e_i)\ge 3$. 
    
    In $H-x$, $e_5$ sees at most $17$ edges, and $e_6$ also sees at most $17$ edges. Now we erase the colors on $e_5$ and $e_6$ under $f$ to get a new partial coloring $f'$. Under $f'$, for each $i\in [1,4]$, $e_i$ sees at most $15$ edges with colors, which means $l_{f'}(e_i)\ge 5$. Also, under $f'$, $e_5$ sees at most $17$ edges with colors, which means $l_{f'}(e_5)\ge 3$; and $e_6$ sees at most $17$ edges with colors, which means $l_{f'}(e_6)\ge 3$.
    
    As $e_5$ does not see $e_6$, we may color them by the same color. There are two cases.

    \textbf{Case 1.} $L_{f'}(e_5)\cap L_{f'}(e_6)\neq\emptyset$.

    In this case, first, we use one color in $L_{f'}(e_5)\cap L_{f'}(e_6)$ on both $e_5$ and $e_6$. Then, there are at least four colors available for each $e_i$ with $i\in [1,4]$. So, we can find feasible colors for $e_1$, $e_2$, $e_3$, and $e_4$, and extend $f'$ to a strong $20$-edge-coloring of $H$, a contradiction.

    \textbf{Case 2.} $L_{f'}(e_5)\cap L_{f'}(e_6)=\emptyset$.

    In this case, we invoke Hall's marriage theorem (Theorem \ref{hall}). For $k\in [1,6]$, let $S_k=L_{f'}(e_k)$. It is clear that, for any $k\in [1,5]$ and any $1\le i_1\neq i_2\neq\dots\neq i_k\le 6$, we have $|\bigcup_{j=1}^k S_{i_j}|\ge k$. Then, for $k=6$, we have $|S_1\cup S_2\cup S_3\cup S_4\cup S_5\cup S_6|\ge |S_5\cup S_6|=|L_{f'}(e_5)\cup L_{f'}(e_6)|\ge 6$, as $L_{f'}(e_5)$ and $L_{f'}(e_6)$ are disjoint. Thus, we can find a system of distinct representatives of $\{S_1,S_2,S_3,S_4,S_5,S_6\}$, which means we can find feasible colors for $e_1$, $e_2$, $e_3$, $e_4$, $e_5$, and $e_6$ to extend $f'$ to a strong $20$-edge-coloring of $H$, a contradiction.

    In either case, we get a contradiction, so a $4(C_{weak})$-vertex is adjacent to at least one $4(A)$-vertex or $4(B)$-vertex.
\end{proof}

Then, regarding the $5$-vertices in $H$, we know that a $5$-vertex must be adjacent to five $3$-vertices, as $\theta(H)\le 8$. Now we prove that at most one of them can be a $3(B_{weak})$-vertex.

\begin{claim}\label{85v}
    A $5$-vertex in $H$ is adjacent to at most one $3(B_{weak})$-vertex.
\end{claim}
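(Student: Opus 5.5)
We argue by contradiction with the minimality of $H$, in the style of Claims \ref{3d1} and \ref{84cv}. Suppose a $5$-vertex $x$ is adjacent to two $3(B_{weak})$-vertices $y_1,y_2$. Since $\theta(H)\le 8$, all five neighbors $y_1,\dots,y_5$ of $x$ are $3$-vertices. Let $w_i$ be the unique $3$-neighbor of $y_i$ for $i=1,2$; the other two neighbors of $y_i$ are $5$-vertices, one of which is $x$. Set $e_i=xy_i$ for $i\in[1,5]$, $e_6=y_1w_1$ and $e_7=y_2w_2$. We take a strong $20$-edge-coloring $f$ of $H-x$, erase the colors of $e_6$ and $e_7$ to obtain $f'$, and recolor the seven edges $e_1,\dots,e_7$.

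\textbf{Step 1: $e_6$ and $e_7$ are distinct and do not see each other.} We check this from Claims \ref{83v} and \ref{3triangle}.
\begin{itemize}
\item If $e_6=e_7$, or $w_1=y_2$, or $w_2=y_1$, or $y_1y_2\in E(H)$, then $x\sim y_1\sim y_2\sim x$ is a triangle containing a $3$-vertex, contradicting Claim \ref{3triangle}.
\item If $w_1=w_2$, or $w_1w_2\in E(H)$, then some $3$-vertex has two $3$-neighbors, contradicting Claim \ref{83v}.
\item If $y_1w_2\in E(H)$ with $w_2\ne w_1$, then $y_1$ has two $3$-neighbors, again contradicting Claim \ref{83v}; the case $y_2w_1\in E(H)$ is symmetric.
\end{itemize}
Note also that every $e_i$ with $i\le 5$ sees both $e_6$ and $e_7$, through $xy_1$ and $xy_2$ respectively.

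\textbf{Step 2: counting under $f'$.}
\begin{itemize}
\item For $j\in\{3,4,5\}$, the edge $e_j$ sees at most $8+2+4+4=18$ edges in $H-x$. This counts the other edges at the $3$-neighbors of $x$, the edges at $y_j$, and the edges at the neighbors of $y_j$, which have degree at most $5$. Two of these edges, $e_6$ and $e_7$, are now uncolored, so $l_{f'}(e_j)\ge 4$.
\item For $e_1$ (and likewise $e_2$), the other neighbors of $y_1$ are a $5$-vertex and the $3$-vertex $w_1$. So $e_1$ sees at most $8+2+4+2=16$ edges in $H-x$, and after erasing $e_6$ and $e_7$ we get $l_{f'}(e_1)\ge 6$.
\item The edge $e_6$ sees in $H-x$ at most one other edge at $y_1$, four edges at the $5$-neighbor of $y_1$, two edges at $w_1$, and at most $4+4$ edges at the two $5$-neighbors of $w_1$ (Claim \ref{83v}). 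That is at most $15$ colored edges, so $l_{f'}(e_6)\ge 5$, and similarly $l_{f'}(e_7)\ge 5$.
\end{itemize}

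\textbf{Step 3: finishing.} This follows the two-case argument of Claim \ref{3d1}.
\begin{itemize}
\item If $L_{f'}(e_6)\cap L_{f'}(e_7)\ne\emptyset$, we give $e_6$ and $e_7$ a common color, which is allowed by Step 1. The remaining lists then have sizes at least $3,3,3,5,5$, and the edges $e_1,\dots,e_5$ can be colored greedily, or by Lemma \ref{lemma2}.
\item Otherwise $L_{f'}(e_6)$ and $L_{f'}(e_7)$ are disjoint, so $|L_{f'}(e_6)\cup L_{f'}(e_7)|\ge 10$, and we verify Hall's condition (Theorem \ref{hall}) for the seven lists.
  \begin{itemize}
  \item Subfamilies of size at most $4$ are fine, since every list has at least $4$ colors.
  \item Size $5$ is fine, since any five of the lists include one of size at least $5$.
  \item Any family containing both $e_6$ and $e_7$ has union at least $10$.
  \item A size-$6$ family omitting $e_6$ or $e_7$ contains $e_1$, whose list has at least $6$ colors.
  \end{itemize}
  A system of distinct representatives then extends $f'$ to a strong $20$-edge-coloring of $H$, a contradiction.
\end{itemize}

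The main obstacle is Step 2. The naive approach of recoloring only the five edges at $x$ fails: with list sizes $2,2,2,4,4$, Lemma \ref{lemma2} does not apply. It is essential that $e_6$ and $e_7$ are seen by all five edges at $x$, so that erasing them raises every list by $2$, while $e_6$ and $e_7$ themselves retain at least $5$ free colors. These counts must be checked carefully, using the fact that the neighbors of $w_i$ other than $y_i$ are $5$-vertices, which holds by Claim \ref{83v}.
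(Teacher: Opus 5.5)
Your proposal is correct and follows the paper's proof essentially step for step. You use the same seven edges, the same check (via Claims \ref{83v} and \ref{3triangle}) that $y_1w_1$ and $y_2w_2$ do not see each other, the same counts $16,18,15$ giving list sizes $6,6,4,4,4,5,5$ after erasure, and the same common-color versus Hall dichotomy; your explicit check of Hall's condition for subfamilies of size $5$ and $6$ just spells out what the paper calls ``clear.''
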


\begin{proof}
    Let $x$ be a $5$-vertex in $H$, which is adjacent to five $3$-vertices $y_1$, $y_2$, $y_3$, $y_4$, and $y_5$. Suppose to the contrary that at least two of them, say $y_1$ and $y_2$, are $3(B_{weak})$-vertices. By the definition of $3(B_{weak})$-vertices, we know that each of $y_1$ and $y_2$ is adjacent to a $3$-vertex. Let $z_1$ be the $3$-vertex adjacent to $y_1$, and let $z_2$ be the $3$-vertex adjacent to $y_2$.

    For convenience, denote $xy_1$ by $e_1$; $xy_2$ by $e_2$; $xy_3$ by $e_3$; $xy_4$ by $e_4$; $xy_5$ by $e_5$; $y_1 z_1$ by $e_6$; and $y_2 z_2$ by $e_7$.

    We know that $e_6$ and $e_7$ are not the same edge. Because otherwise we have $y_1=z_2$ and $y_2=z_1$, so $x\sim y_1\sim y_2\sim x$ is a triangle having a $3$-vertex, which is forbidden by Claim \ref{3triangle}, a contradiction.
    
    Then, we check that $e_6$ and $e_7$ do not see each other.

    If $e_6$ and $e_7$ are incident, then there are three possible cases.
    \begin{enumerate}
        \item $y_1=z_2$. In this case, $x\sim y_1\sim y_2\sim x$ is a triangle forbidden by Claim \ref{3triangle}, a contradiction.
        \item $y_2=z_1$. In this case, $x\sim y_2\sim y_1\sim x$ is a triangle forbidden by Claim \ref{3triangle}, a contradiction.
        \item $z_1=z_2$. In this case, two $3$-vertices $y_1$ and $z_1$ are adjacent to each other, so by Claim \ref{83v}, all their neighbors should be $5$-vertices. However, $z_1$ is adjacent to another $3$-vertex $y_2$, a contradiction.
    \end{enumerate}

    If $e_6$ and $e_7$ are not incident, but are joined by another edge, then there are four possible cases.
    \begin{enumerate}
        \item $y_1$ and $y_2$ are joined by an edge. In this case, $x\sim y_1\sim y_2\sim x$ is a triangle forbidden by Claim \ref{3triangle}, a contradiction.
        \item $y_1$ and $z_2$ are joined by an edge. In this case, two $3$-vertices $y_1$ and $z_2$ are adjacent to each other, so by Claim \ref{83v}, all their neighbors should be $5$-vertices. However, $z_2$ is adjacent to another $3$-vertex $y_2$, a contradiction.
        \item $y_2$ and $z_1$ are joined by an edge. In this case, two $3$-vertices $y_2$ and $z_1$ are adjacent to each other, so by Claim \ref{83v}, all their neighbors should be $5$-vertices. However, $z_1$ is adjacent to another $3$-vertex $y_1$, a contradiction.
        \item $z_1$ and $z_2$ are joined by an edge. In this case, two $3$-vertices $z_1$ and $z_2$ are adjacent to each other, so by Claim \ref{83v}, all their neighbors should be $5$-vertices. However, $z_1$ is adjacent to another $3$-vertex $y_1$, and $z_2$ is adjacent to another $3$-vertex $y_2$, a contradiction.
    \end{enumerate}

    Thus, $e_6$ and $e_7$ do not see each other.

    By the minimality of $H$, we know that $H-x$ has a strong $20$-edge-coloring $f$. It is easy to check that, for each $i\in \{1,2\}$, $e_i$ sees at most $16$ edges (including $e_6$ and $e_7$) in $H-x$, which means $l_f(e_i)\ge 4$. Also, for each $j\in \{3,4,5\}$, $e_j$ sees at most $18$ edges (including $e_6$ and $e_7$) in $H-x$, which means $l_f(e_j)\ge 2$.
    
    In $H-x$, $e_6$ sees at most $15$ edges, and $e_7$ also sees at most $15$ edges. Now we erase the colors on $e_6$ and $e_7$ under $f$ to get a new partial coloring $f'$. Under $f'$, for each $i\in \{1,2\}$, $e_i$ sees at most $14$ edges with colors, which means $l_{f'}(e_i)\ge 6$. For each $j\in \{3,4,5\}$, $e_j$ sees at most $16$ edges with colors, which means $l_{f'}(e_j)\ge 4$. Also, under $f'$, $e_6$ sees at most $15$ edges with colors, which means $l_{f'}(e_6)\ge 5$; and $e_7$ sees at most $15$ edges with colors, which means $l_{f'}(e_7)\ge 5$.

    As $e_6$ does not see $e_7$, we may color them by the same color. There are two cases.

    \textbf{Case 1.} $L_{f'}(e_6)\cap L_{f'}(e_7)\neq\emptyset$.

    In this case, first, we use one color in $L_{f'}(e_6)\cap L_{f'}(e_7)$ on both $e_6$ and $e_7$. Then, there are at least five colors available for each $e_i$ with $i\in \{1,2\}$; and at least three colors available for each $e_j$ with $j\in \{3,4,5\}$. So, we can find feasible colors for $e_1$, $e_2$, $e_3$, $e_4$, and $e_5$, and extend $f'$ to a strong $20$-edge-coloring of $H$, a contradiction.

    \textbf{Case 2.} $L_{f'}(e_6)\cap L_{f'}(e_7)=\emptyset$.

    In this case, we invoke Hall's marriage theorem (Theorem \ref{hall}). For $k\in [1,7]$, let $S_k=L_{f'}(e_k)$. It is clear that, for any $k\in [1,6]$ and any $1\le i_1\neq i_2\neq\dots\neq i_k\le 7$, we have $|\bigcup_{j=1}^k S_{i_j}|\ge k$. Then, for $k=7$, we have $|S_1\cup S_2\cup S_3\cup S_4\cup S_5\cup S_6\cup S_7|\ge |S_6\cup S_7|=|L_{f'}(e_6)\cup L_{f'}(e_7)|\ge 10>7$, as $L_{f'}(e_6)$ and $L_{f'}(e_7)$ are disjoint. Thus, we can find a system of distinct representatives of $\{S_1,S_2,S_3,S_4,S_5,S_6,S_7\}$, which means we can find feasible colors for $e_1$, $e_2$, $e_3$, $e_4$, $e_5$, $e_6$, and $e_7$ to extend $f'$ to a strong $20$-edge-coloring of $H$, a contradiction.

    In either case, we get a contradiction, so a $5$-vertex in $H$ is adjacent to at most one $3(B_{weak})$-vertex.
\end{proof}

In the minimal counterexample $H$, for $d\in\{3,4,5\}$, let each $d$-vertex $v$ have initial charge $\omega(v)=d-\frac{113}{31}$. By the assumption $mad(H)<\frac{113}{31}$, we have $\sum_{v\in V(H)}\omega(v)<0$. 

If $v$ is a $d$-vertex and it is adjacent to $u$, then $v$ is called a \emph{$d$-neighbor} of $u$. Now we redistribute the charge using the following discharging rules.

\begin{enumerate}
    \item[R1.] Each $5$-vertex gives $\frac{10}{31}$ to its $3(B_{weak})$-neighbor (if it has a $3(B_{weak})$-neighbor); and gives $\frac{8}{31}$ to each $3$-neighbor that is not a $3(B_{weak})$-vertex.
    \item[R2.] Each $4(A)$-vertex gives $\frac{11}{124}$ to each of its $4$-neighbors.
    \item[R3.] Each $4(B)$-vertex gives $\frac{8}{31}$ to its $3$-neighbor; and gives $\frac{1}{31}$ to each of its $4$-neighbors.
    \item[R4.] Each $4(C_{strong})$-vertex gives $\frac{7}{31}$ to its $3(C)$-neighbor (if it has a $3(C)$-neighbor); and gives $\frac{4}{31}$ to each of its $3(B_{strong})$-neighbors.
    \item[R5.] Each $4(C_{weak})$-vertex gives $\frac{6}{31}$ to each of its $3(C)$-neighbors.
    \item[R6.] Each $4(D)$-vertex gives $\frac{4}{31}$ to each of its $3(B_{strong})$-neighbors.
\end{enumerate}

After redistributing the charge, each vertex $v$ gets a new charge $\omega'(v)$, and the total charge in $H$ does not change.

Using the claims and discharging rules we made, we can check that $\omega'(v)\ge 0$ for every $v\in V(H)$.

For the $5$-vertices in $H$, by Claim \ref{85v} and R1, we have:
\begin{itemize}
    \item If $v$ is a $5$-vertex which has one $3(B_{weak})$-neighbor and four $3$-neighbors that are not $3(B_{weak})$-vertices, then $\omega'(v)=5-\frac{113}{31}-\frac{10}{31}-4\cdot\frac{8}{31}=0$.
    \item If $v$ is a $5$-vertex which has five $3$-neighbors that are not $3(B_{weak})$-vertices, then $\omega'(v)=5-\frac{113}{31}-5\cdot\frac{8}{31}>0$.
\end{itemize}

Then, for the $4$-vertices in $H$, we have:
\begin{itemize}
    \item If $v$ is a $4(A)$-vertex, then $\omega'(v)=4-\frac{113}{31}-4\cdot\frac{11}{124}=0$, by R2.
    \item If $v$ is a $4(B)$-vertex, then $\omega'(v)=4-\frac{113}{31}-\frac{8}{31}-3\cdot\frac{1}{31}=0$, by R3.
    \item By Claim \ref{4cstrong}, a $4(C_{strong})$-vertex is either adjacent to one $3(C)$-vertex and one $3(B_{strong})$-vertex; or adjacent to two $3(B_{strong})$-vertices. So, if $v$ is a $4(C_{strong})$-vertex, then $\omega'(v)\ge 4-\frac{113}{31}-\frac{7}{31}-\frac{4}{31}=0$, by R4.
    \item By definition, a $4(C_{weak})$-vertex is adjacent to two $3(C)$-vertices. By Claim \ref{84cv}, a $4(C_{weak})$-vertex is adjacent to at least one $4(A)$-vertex or $4(B)$-vertex. So, if $v$ is a $4(C_{weak})$-vertex, then $\omega'(v)\ge 4-\frac{113}{31}-2\cdot\frac{6}{31}+\frac{1}{31}=0$, by R5 and R3.
    \item By Claim \ref{84dv}, a $4(D)$-vertex is adjacent to three $3(B_{strong})$-vertices and one $4(A)$-vertex or $4(B)$-vertex. So, if $v$ is a $4(D)$-vertex, then $\omega'(v)\ge 4-\frac{113}{31}-3\cdot\frac{4}{31}+\frac{1}{31}=0$, by R6 and R3.
\end{itemize}

Finally, for the $3$-vertices in $H$, we have:
\begin{itemize}
    \item If $v$ is a $3(A)$-vertex, then $\omega'(v)=3-\frac{113}{31}+3\cdot\frac{8}{31}>0$, by R1.
    \item A $3(B_{strong})$-vertex is adjacent to two $5$-vertices and one $4$-vertex. This $4$-vertex can be a $4(B)$-vertex, a $4(C_{strong})$-vertex, or a $4(D)$-vertex. So, if $v$ is a $3(B_{strong})$-vertex, then $\omega'(v)\ge 3-\frac{113}{31}+2\cdot\frac{8}{31}+\frac{4}{31}=0$, by R1 and R4 (alternatively, R1 and R6).
    \item If $v$ is a $3(B_{weak})$-vertex, then $\omega'(v)=3-\frac{113}{31}+2\cdot\frac{10}{31}=0$, by R1.
    \item As we mentioned after the classification of $3$-vertices, a $3(C)$-vertex is adjacent to one $5$-vertex and two $4$-vertices. By definition, these two $4$-vertices cannot be $4(A)$-vertices. By Claim \ref{84dv}, these two $4$-vertices cannot be $4(D)$-vertices. So, these two $4$-vertices can only be $4(B)$-vertices and/or $4(C)$-vertices. Thus, if $v$ is a $3(C)$-vertex, then $\omega'(v)\ge 3-\frac{113}{31}+\frac{8}{31}+2\cdot\frac{6}{31}=0$, by R1 and R5.
    \item By Claim \ref{83dv}, a $3(D)$-vertex is adjacent to three $4(B)$-vertices. So, if $v$ is a $3(D)$-vertex, then $\omega'(v)=3-\frac{113}{31}+3\cdot\frac{8}{31}>0$, by R3.
\end{itemize}

So, we have $\omega'(v)\ge 0$ for every vertex $v\in V(H)$. Since the total charge in $H$ does not change, we have
\begin{align*}
    0\le \sum_{v\in V(H)}\omega'(v)=\sum_{v\in V(H)}\omega(v)<0,
\end{align*}
a contradiction. Hence, a minimal counterexample $H$ does not exist, and Theorem \ref{thm2} is established.

\section{Remarks}
Given the value of $\theta(G)$, the following result from \cite{Wa} determines the largest possible value of $mad(G)$.

\begin{proposition}[Wang \cite{Wa}]
    For a graph $G$, we have
    \[
    mad(G)\le \begin{cases}
        \frac{2k(k+1)}{2k+1} & if\ \theta(G)=2k+1\ is\ odd;\\
        k & if\ \theta(G)=2k\ is\ even.
    \end{cases}
    \]
    The equality can be attained in either case.
\end{proposition}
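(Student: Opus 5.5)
Fix a subgraph $G'\subseteq G$; since $\theta(G')\le\theta(G)$, it suffices to bound the average degree of $G'$ by the stated quantity. I will bound $2|E(G')|$ by charging each edge to its endpoints. The constraint $\theta(G')\le\theta$ says that $d(u)+d(v)\le\theta$ for every edge $uv$.

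\textbf{Even case $\theta=2k$.} Give each edge $uv$ weight $\frac{1}{d(u)}+\frac{1}{d(v)}$, where degrees are taken in $G'$. Summing over all edges gives exactly $|V(G')|$, after discarding isolated vertices, which only lowers the average. For each edge, $\frac{1}{a}+\frac{1}{b}$ with $a+b\le 2k$ is minimized at $a=b=k$ by convexity, so each edge has weight at least $\frac{2}{k}$. Hence $|V|\ge \frac{2|E|}{k}$, and the average degree $\frac{2|E|}{|V|}$ is at most $k$. Equality holds for $k$-regular graphs, for example $K_{k,k}$.

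\textbf{Odd case $\theta=2k+1$.} The same per-edge bound fails, because $\frac1a+\frac1b$ is not balanced enough. I would switch to discharging. Give each vertex charge $d(v)-\frac{2k(k+1)}{2k+1}$. Vertices of degree at least $k+1$ have positive charge. All their neighbors have degree at most $k$, so these high vertices form an independent set. Each vertex of degree $d\ge k+1$ sends $\frac{d-c}{d}$ along each of its edges, where $c=\frac{2k(k+1)}{2k+1}$, so its own charge ends at $0$. A vertex $u$ of degree $a\le k$ adjacent to a vertex of degree $b\le 2k+1-a$ receives at least $\frac{b-c}{b}\ge \frac{(2k+1-a)-c}{2k+1-a}$ from each high neighbor. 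Its edges to other low vertices must be handled separately: a low vertex whose degree is at most $k$ but is adjacent only to low vertices has degree at most $k<c$, so this needs care.

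The cleaner route is the weighted edge count with weights $\frac{1}{d}$ applied only to edges with a vertex of degree at least $k+1$, combined with a direct optimization. I expect the main obstacle to be proving the inequality
\[
\frac{1}{a}\cdot(\text{stuff})\ \ge\ \frac{2k+1}{k(k+1)}\ \text{per edge}.
\]
For an edge $ab$ with $a+b\le 2k+1$, $\frac1a+\frac1b\ge \frac1k+\frac1{k+1}=\frac{2k+1}{k(k+1)}$, since the minimum over integers $a+b=2k+1$ is at $\{k,k+1\}$. If $a+b\le 2k$, the sum is at least $\frac2k$, which is larger. So in fact the same argument as in the even case works, with per-edge weight at least $\frac{2k+1}{k(k+1)}$. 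This gives $|V|\ge |E|\frac{2k+1}{k(k+1)}$, and hence average degree at most $\frac{2k(k+1)}{2k+1}$. The key point is integrality, which prevents splitting $2k+1$ evenly.

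For equality, take $K_{k,k+1}$. It has $2k+1$ vertices, $k(k+1)$ edges, and $\theta=2k+1$, so its average degree is $\frac{2k(k+1)}{2k+1}$. In the even case, $K_{k,k}$ attains the bound. The only subtle step is verifying the integer minimization, which is routine.
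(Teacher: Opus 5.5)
Your final argument is correct and complete. The paper does not prove this proposition; it only quotes it from \cite{Wa}, so there is no in-paper argument to compare against. With degrees taken in $G'$, the sum $\sum_{uv\in E(G')}\bigl(\frac{1}{d(u)}+\frac{1}{d(v)}\bigr)$ equals the number of non-isolated vertices of $G'$, which is at most $|V(G')|$. Every edge satisfies $d(u)+d(v)\le\theta(G)$. Since $\frac1a+\frac1b$ decreases in each variable, its minimum over positive integers with $a+b\le\theta$ is attained at $a+b=\theta$ with the most balanced split. That minimum is $\frac{2}{k}$ for $\theta=2k$, and $\frac1k+\frac1{k+1}=\frac{2k+1}{k(k+1)}$ for $\theta=2k+1$. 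This gives both bounds at once, and $K_{k,k}$ and $K_{k,k+1}$ show sharpness. Your double-counting argument handles both parities uniformly, with integrality entering only in that last minimization.

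When you write it up, delete the sentence claiming that the per-edge bound fails in the odd case, since your own computation refutes it. Also delete the abandoned discharging paragraph, whose inequalities point the wrong way. To bound the average degree from above, every final charge must be at most $0$, so what a low vertex of degree $a$ receives must be bounded from \emph{above}. Since $\frac{b-c}{b}$ increases in $b$, the constraint $b\le 2k+1-a$ gives $\frac{b-c}{b}\le\frac{2k+1-a-c}{2k+1-a}$, not $\ge$. With the signs fixed, that discharging actually works and reduces to the same integer inequality $a(2k+1-a)\le k(k+1)$, but it adds nothing to the weighting argument. Finally, the isolated-vertex remark is cleaner stated as above: nothing needs to be discarded, since the weight sum simply counts the non-isolated vertices.
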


So, if $\theta(G)=7$, then the largest possible value of $mad(G)$ is $\frac{24}{7}$, which means our bound $mad(G)<\frac{34}{11}$ in Theorem \ref{thm1} is $\frac{26}{77}$ away from the best possible bound; if $\theta(G)=8$, then the largest possible value of $mad(G)$ is $4$, which means our bound $mad(G)<\frac{113}{31}$ in Theorem \ref{thm2} is $\frac{11}{31}$ away from the best possible bound.


\begin{thebibliography}{10}

\bibitem{An}
L.~D. Andersen.
\newblock The strong chromatic index of a cubic graph is at most {$10$}.
\newblock volume 108, pages 231--252. 1992.
\newblock Topological, algebraical and combinatorial structures. Frol\'ik's memorial volume.

\bibitem{BPP}
M.~Bonamy, T.~Perrett, and L.~Postle.
\newblock Colouring graphs with sparse neighbourhoods: bounds and applications.
\newblock {\em J. Combin. Theory Ser. B}, 155:278--317, 2022.

\bibitem{BJ}
H.~Bruhn and F.~Joos.
\newblock A stronger bound for the strong chromatic index.
\newblock {\em Combin. Probab. Comput.}, 27(1):21--43, 2018.

\bibitem{CCZZ}
L.~Chen, S.~Chen, R.~Zhao, and X.~Zhou.
\newblock The strong chromatic index of graphs with edge weight eight.
\newblock {\em J. Comb. Optim.}, 40(1):227--233, 2020.

\bibitem{CHYZ}
L.~Chen, M.~Huang, G.~Yu, and X.~Zhou.
\newblock The strong edge-coloring for graphs with small edge weight.
\newblock {\em Discrete Math.}, 343(4):111779, 11, 2020.

\bibitem{EN}
P.~Erd\H{o}s and J.~Ne\v{s}et\v{r}il.
\newblock {\em Irregularities of partitions}, volume~8 of {\em Algorithms and Combinatorics: Study and Research Texts, edited by G. Hal\'asz and V.T. S\'os}.
\newblock Springer-Verlag, Berlin, 1989.
\newblock Papers from the meeting held in Fert\H od, July 7--11, 1986.

\bibitem{FJ}
J.-L. Fouquet and J.-L. Jolivet.
\newblock Strong edge-colorings of graphs and applications to multi-{$k$}-gons.
\newblock {\em Ars Combin.}, 16:141--150, 1983.

\bibitem{Ha}
P.~Hall.
\newblock On {R}epresentatives of {S}ubsets.
\newblock {\em J. London Math. Soc.}, 10(1):26--30, 1935.

\bibitem{HHT}
P.~Hor\'ak, Q.~He, and W.~T. Trotter.
\newblock Induced matchings in cubic graphs.
\newblock {\em J. Graph Theory}, 17(2):151--160, 1993.

\bibitem{HSY}
M.~Huang, M.~Santana, and G.~Yu.
\newblock Strong chromatic index of graphs with maximum degree four.
\newblock {\em Electron. J. Combin.}, 25(3):Paper No. 3.31, 24, 2018.

\bibitem{HDK}
E.~Hurley, R.~de~Joannis~de Verclos, and R.~J. Kang.
\newblock An improved procedure for colouring graphs of bounded local density.
\newblock {\em Adv. Comb.}, pages Paper No. 7, 33, 2022.

\bibitem{LL}
Y.~Lin and W.~Lin.
\newblock Strong chromatic index of claw-free graphs with edge weight seven.
\newblock {\em Discuss. Math. Graph Theory}, 44(4):1311--1325, 2024.

\bibitem{LLH}
J.~Lu, H.~Liu, and X.~Hu.
\newblock On strong edge-coloring of graphs with maximum degree 5.
\newblock {\em Discrete Appl. Math.}, 344:120--128, 2024.

\bibitem{LLY}
J.-B. Lv, X.~Li, and G.~Yu.
\newblock On strong edge-coloring of graphs with maximum degree 4.
\newblock {\em Discrete Appl. Math.}, 235:142--153, 2018.

\bibitem{MR}
M.~Molloy and B.~Reed.
\newblock A bound on the strong chromatic index of a graph.
\newblock {\em J. Combin. Theory Ser. B}, 69(2):103--109, 1997.

\bibitem{NN}
K.~Nakprasit and K.~Nakprasit.
\newblock The strong chromatic index of graphs with restricted {O}re-degrees.
\newblock {\em Ars Combin.}, 118:373--380, 2015.

\bibitem{NY}
S.~Nelson and G.~Yu.
\newblock Planar graphs with ore-degree at most seven is strongly $13$-edge-colorable.
\newblock {\em arXiv preprint arXiv:2509.06808}, 2025.

\bibitem{Wa}
R.~Wang.
\newblock Strong edge-coloring of graphs with maximum edge weight seven.
\newblock {\em J. Comb. Optim.}, 51(1):Paper No. 2, 2026.

\bibitem{WL}
J.~Wu and W.~Lin.
\newblock The strong chromatic index of a class of graphs.
\newblock {\em Discrete Math.}, 308(24):6254--6261, 2008.

\bibitem{Za}
C.~Zang.
\newblock The strong chromatic index of graphs with maximum degree \uppercase{$\Delta$}.
\newblock {\em arXiv preprint arXiv:1510.00785}, 2015.

\end{thebibliography}
\end{document}